\theoremstyle{plain}
    \newtheorem{thm}{Theorem}[section]
    \newtheorem{corollary}[thm]{Corollary}
    \newtheorem{lemma}[thm]{Lemma}
    \newtheorem{theorem}[thm]{Theorem}
\theoremstyle{definition}
    \newtheorem{remark}[thm]{Remark}
\theoremstyle{remark}
\newcommand{\authorfootnotes}{\renewcommand\thefootnote{\@fnsymbol\c@footnote}}
 \title[Fast $\&$ simple modification of Newton's method avoiding saddle points]{A fast and simple modification of Newton's method helping to avoid saddle points}
  \author[T. T. Truong {\it et al.}]{Tuyen Trung Truong, Tat Dat To, Hang-Tuan Nguyen, Thu Hang Nguyen, Hoang  Phuong Nguyen, Maged Helmy}
    \date{\today}
   \subjclass[2010]{65Kxx, 68Txx, 49Mxx, 68Uxx }
\begin{document}
\maketitle

{\bf Affiliations and email addresses:}  T.T.T.: Department of Mathematics, University of Oslo, Blindern 0851 Oslo, Norway, tuyentt@math.uio.no; {\bf ORCID:} 0000-0001-9103-0923.

T.D.T.:  Ecole Nationale de l'Aviation Civile, 31400 Toulouse, France, tatdat.to@gmail.com; Current workplace: Institut de math\'ematique de Jussieu-Paris Rive Gauche, Sorbone University, France, tat-dat.to@imj-prg.fr 

H.-T. N.: Axon AI Research, Seattle, Washington, USA, hnguyen@axon.com; 

T. H. N.: Torus Actions SAS, 31400 Toulouse, France, hangntt@torus-actions.fr; 

H. P. N.: Torus Actions SAS, 31400 Toulouse, France, nhphuong@torus-actions.fr; 

and M. H.: Department of Informatics, University of Oslo, Blindern 0851 Oslo, Norway $\&$ ODI Medical AS, Vinderen 0370 Oslo, Norway, magedaa@ifi.uio.no, office@odimedical.com.

\begin{abstract}
We propose in this paper New Q-Newton's method. The update rule is very simple conceptually, for example $x_{n+1}=x_n-w_n$  where $w_n=pr_{A_n,+}(v_n)-pr_{A_n,-}(v_n)$, with  $A_n=\nabla ^2f(x_n)+\delta _n||\nabla f(x_n)||^2.Id$ and $v_n=A_n^{-1}.\nabla f(x_n)$. Here $\delta _n$ is an appropriate real number so that $A_n$ is invertible, and $pr_{A_n,\pm}$ are projections to the vector subspaces generated by eigenvectors of positive (correspondingly negative) eigenvalues of $A_n$. 

The main result of this paper roughly says that if $f$ is $C^3$ (can be unbounded from below) and a sequence $\{x_n\}$, constructed by the New Q-Newton's method from a random initial point $x_0$, {\bf converges}, then the limit point is a critical point and is not a saddle point, and the convergence rate is the same as that of Newton's method. The first author has recently been successful incorporating Backtracking line search to New Q-Newton's method, thus resolving the convergence guarantee issue observed for some (non-smooth) cost functions. An application to quickly finding zeros of a univariate meromorphic function will be discussed. Various experiments are performed, against well known algorithms such as BFGS and Adaptive Cubic Regularization are presented. 
\end{abstract}


{\bf Keywords:} Iterative optimisation methods; Modifications of Newton's method; Random processes; Rate of convergence;  Roots of univariate meromorphic functions; Saddle points

{\bf Declarations:} Funding: T.T.T. is supported by Young Research Talents grant number 300814 from Research Council of Norway. Conflicts of interests/Competing interests: not applicable. Available of data and material: not applicable.  
Code availability: available on GitHub \cite{phuongGitHub}.


\section{Introduction}

\label{Subsection1}

An important question one faces in research and real life applications is that of finding minima of some objective cost functions. In realistic applications the optimisation problem is so large scale that no one can hope to find closed form solutions. Indeed, optimisation problems associated to Deep Neural Networks (DNN) easily have millions of variables. We note that finding global optima is NP-hard. Moreover, saddle points are dominant in higher dimensions, see Subsection \ref{Subsection4}.  Therefore, one is more than happy with iterative methods which can guarantee convergence to local minima. 

To date, only modifications of a classical iterative method by Armijo (also called Backtracking GD)  are theoretically proven to, when the cost function is Morse or satisfies the Losjasiewicz gradient inequality, assure convergence to local minima. More details are presented in Section 2. Experiments on DNN with CIFAR10 and CIFAR100 datasets \cite{truong-nguyen1, truong-nguyen2} (see \cite{vaswani-etal} for a more recent similar implementation) show that Backtracking Gradient Descent is also implementable in huge scale optimisation problems in Deep Neural Networks, with better accuracy than the popular used algorithms (such as Stochastic Gradient Descent, Adam, Adadelta, RMSProp, NAG,  Momentum and so on) and without worry about manual fine tuning of learning rates, while needing only a comparable computing time. See Section \ref{SubsectionLargeScale} for some experimental results, reported in \cite{truong-nguyen1, truong-nguyen2}. Hence, it can be said that Backtracking GD is theoretically the best iterative method, and for GD methods in DNN it is also practically the best. 

On the other hand, Newton's method is known to usually converge faster than GD (more precisely, in terms of the number of iterations needed), if it {\bf actually converges}. However, it is known that Newton's method can diverge even if the cost function has compact sublevels and can converge to saddle points or local maxima. Newton's method and modifications are a very popular topic: it seems that at least one paper about this topic appears every month. Therefore, it is desirable if one can modify Newton's method in such a way so that if it {\bf converges}, then its rate of convergence is the same as that of Newton's method and it avoids saddle points. Also, to be able to apply this method practically, it is desirable that the modification is simple. 

We recall that if a sequence $x_n$ converges to a point $x_{\infty}$, and $||x_{n+1}-x_{\infty}||=O(||x_n-x_{\infty}||^{\epsilon})$ for some positive constant $\epsilon >0$, here we use the big-O notation, then $\epsilon$ is called the rate of convergence for the sequence $x_n$. If $\epsilon =1$, then we also say that the rate of convergence is linear; while if $\epsilon =2$, then we say that the rate of convergence is quadratic. 

The main result of this paper is to propose such a modification, called New Q-Newton's method, see Subsection \ref{Subsection5}.  The main result we obtain is the following. 
\begin{theorem} Let $f:\mathbb{R}^m\rightarrow \mathbb{R}$ be a $C^3$ function. Let $\{x_n\}$ be a sequence constructed by the New Q-Newton's method. Assume that $\{x_n\}$ converges to $x_{\infty}$. Then

1) $\nabla f(x_{\infty})=0$, that is $x_{\infty}$ is a critical point of $f$.

2) If the hyperparameters $\delta _0,\ldots ,\delta _m$ are chosen {\bf randomly}, there is a set $\mathcal{A}\subset \mathbb{R}^m$ of Lebesgue measure $0$, so that if $x_0\notin \mathcal{A}$, then $x_{\infty}$ cannot be  a saddle point of $f$. 

3) If $x_0\notin \mathcal{A}$ (as defined in part 2) and $\nabla ^2f(x_{\infty})$ is invertible, then $x_{\infty}$ is a local minimum and the rate of convergence is quadratic. 

4) More generally, if $\nabla ^2f(x_{\infty})$ is invertible (but no assumption on the randomness of $x_0$), then the rate of convergence is at least linear. 

5) If $x_{\infty}'$ is a non-degenerate local minimum of $f$, then for initial points $x_0'$ close enough to $x_{\infty}'$, the sequence $\{x_n'\}$  constructed by New Q-Newton's method will converge to $x_{\infty}'$. 
\label{TheoremMain}\end{theorem}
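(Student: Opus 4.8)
I would prove the five assertions in turn, with (3)--(5) sharing a local ``perturbed Newton'' analysis near a non-degenerate critical point and (2) resting on a center--stable manifold argument.

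\emph{Part 1.} The starting observation is that $A_n$ is invertible by construction, so its positive and negative eigenspaces are orthogonal and span $\mathbb R^m$; hence $pr_{A_n,+}+pr_{A_n,-}=\mathrm{Id}$ and $\|w_n\|^2=\|pr_{A_n,+}(v_n)\|^2+\|pr_{A_n,-}(v_n)\|^2=\|v_n\|^2$, i.e. $\|x_{n+1}-x_n\|=\|v_n\|$. From $\nabla f(x_n)=A_nv_n$ one gets $\|\nabla f(x_n)\|\le\|A_n\|\,\|v_n\|=\|A_n\|\,\|x_{n+1}-x_n\|$, and $\|A_n\|\le\|\nabla^2f(x_n)\|+|\delta_n|\,\|\nabla f(x_n)\|^2$ stays bounded because $x_n$ converges, $f\in C^3$, and $\delta_n\in\{\delta_0,\dots,\delta_m\}$ is finite-valued. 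Since $\|x_{n+1}-x_n\|\to0$, this forces $\nabla f(x_n)\to0$, so $\nabla f(x_\infty)=0$.

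\emph{Parts 3--5.} If $\nabla^2f(x_\infty)$ is invertible then, on a small ball $B=B(x_\infty,r)$, the matrix $A(x)=\nabla^2f(x)+\delta_0\|\nabla f(x)\|^2\,\mathrm{Id}$ is invertible (both summands are close to the invertible $\nabla^2f(x_\infty)$, the second one being moreover $O(\|x-x_\infty\|^2)$), so the algorithm always chooses index $0$ there and the iteration map $F(x)=x-\big(pr_{A(x),+}-pr_{A(x),-}\big)A(x)^{-1}\nabla f(x)$ is $C^1$ on $B$ with $F(x_\infty)=x_\infty$. Differentiating at $x_\infty$ and using $\nabla f(x_\infty)=0$, $A(x_\infty)=\nabla^2f(x_\infty)$, one gets $DF(x_\infty)=\mathrm{Id}-(P^+-P^-)$, i.e. $DF(x_\infty)$ acts as $0$ on the positive eigenspace $E^+$ and as $2\,\mathrm{Id}$ on the negative eigenspace $E^-$ of $\nabla^2f(x_\infty)$. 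For part 4 this already gives $\|x_{n+1}-x_\infty\|\le(\sup_B\|DF\|)\,\|x_n-x_\infty\|$ once $x_n\in B$, hence rate at least linear. For parts 3 and 5, if moreover $\nabla^2f(x_\infty)$ is positive definite then, shrinking $r$, $A(x)$ is positive definite on $B$, so $pr_{A(x),-}=0$ and the step reduces to the perturbed Newton step $x_{n+1}=x_n-A(x_n)^{-1}\nabla f(x_n)$; the identity $x_{n+1}-x_\infty=A(x_n)^{-1}\big[A(x_n)(x_n-x_\infty)-\nabla f(x_n)\big]$ together with the $C^3$ Taylor expansions $\nabla f(x_n)=\nabla^2f(x_\infty)(x_n-x_\infty)+O(\|x_n-x_\infty\|^2)$ and $A(x_n)=\nabla^2f(x_\infty)+O(\|x_n-x_\infty\|)$ yields $\|x_{n+1}-x_\infty\|\le C\|x_n-x_\infty\|^2$. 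Part 3 follows by combining parts 1--2 (the limit is critical, is not a saddle, and has invertible Hessian, hence is a non-degenerate local minimum) with this quadratic estimate; part 5 follows by shrinking $r$ further so that $Cr\le\tfrac12$, which makes $B$ forward-invariant with every orbit starting in $B$ converging to $x_\infty$.

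\emph{Part 2.} Fix $m+1$ distinct values $\delta_0,\dots,\delta_m$ (a ``random'' choice is in particular distinct, which is what always makes $A_n$ invertibility achievable). Let $\mathcal S$ be the set of non-degenerate saddle points of $f$; by part 3 it suffices to show that $\{x_0:\ \{x_n\}\ \text{converges to some point of }\mathcal S\}$ has Lebesgue measure $0$. For $z\in\mathcal S$, the analysis above makes $F$ a $C^1$ map near $z$ with $F(z)=z$ and with $DF(z)$ having the eigenvalue $2>1$ on $E^-\neq\{0\}$; by the Center--Stable Manifold Theorem for $C^1$ local maps there are a neighbourhood $U_z$ and a $C^1$ embedded disc $W_z\subset U_z$ of dimension $\dim E^+<m$ such that any point whose forward $F$-orbit stays in $U_z$ lies on $W_z$. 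A convergent orbit is eventually trapped in $U_z$, so $\{x_0:\ x_n\to z\}\subseteq\bigcup_{N\ge0}F^{-N}(W_z)$; since non-degenerate saddles are isolated, $\mathcal S$ is countable, and the candidate exceptional set is the countable union $\mathcal A=\bigcup_{z\in\mathcal S}\bigcup_{N\ge0}F^{-N}(W_z)$. The hard part is proving $\mathcal A$ is null, which amounts to showing $F$ pulls back null sets to null sets. Unlike in gradient descent, $F$ is only piecewise $C^1$ (the chosen index can jump on the closed sets where a lower-index $A(x)$ is singular) and $DF$ is not globally invertible — it is already singular at the very saddles of interest. I would handle this by establishing that $F$ is a local $C^1$ diffeomorphism outside a closed null set $Z$ (built from the loci where $\det A(x)=0$ for each relevant index and from $\{\det DF(x)=0\}$, all null for a generic configuration of the $\delta_j$ given $f\in C^3$), so that preimages of null sets remain null away from the iterated preimages of $Z$, and then absorbing the orbits that ever meet $Z$ into a further null set by the same device. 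Verifying these genericity and measure-theoretic statements — and, should ``saddle point'' be meant to include degenerate critical points with a negative eigenvalue, extending the center--stable analysis to the case where $\nabla^2f(x_\infty)$ has a kernel, where $F$ may fail to be $C^1$ at $x_\infty$ and one must exploit that the shift $\delta_0\|\nabla f\|^2\,\mathrm{Id}$ pushes the near-zero eigenvalues to the positive side — is where I expect the genuine work to lie.
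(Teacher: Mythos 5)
Your overall route coincides with the paper's: part 1 via $\|w_n\|=\|v_n\|$ and boundedness of $A_n$; parts 3--5 via the local analysis near a point with invertible Hessian, where the index $0$ is eventually selected, $F$ is $C^1$, $DF(x_\infty)=2\,pr_{A,-}$, and in the positive-definite case the step reduces to a perturbed Newton step giving the quadratic estimate and the local attraction of part 5; part 2 via a center--stable manifold at the limit together with the claim that $F$ pulls null sets back to null sets. Two points, however, are genuine gaps. First, in part 3 the inference ``critical, not a saddle, invertible Hessian $\Rightarrow$ non-degenerate local minimum'' is invalid under the paper's definition of saddle point (non-singular Hessian with eigenvalues of both signs): a non-degenerate local maximum is not a saddle, and your $\mathcal{A}$, built only over the set $\mathcal{S}$ of non-degenerate saddles, does not exclude convergence to a maximum. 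The fix is cheap --- at a non-degenerate maximum $DF(x_\infty)=2\,\mathrm{Id}$, so $\dim E^{+}=0<m$ and your own center--stable argument applies verbatim --- but as written you must enlarge $\mathcal{S}$ to all non-degenerate critical points having at least one negative Hessian eigenvalue; this is also how the paper's terse ``it follows that $A$ is positive definite'' has to be read.

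Second, the heart of part 2 --- that $\mathcal{A}$ is Lebesgue-null, i.e.\ that preimages of null sets under $F$ are null --- is precisely the step you announce as ``where the genuine work lies'' and do not carry out, and it is precisely the step where the randomness of $\delta_0,\dots,\delta_m$ enters (distinctness of the $\delta_j$ only guarantees termination of the inner loop, not this). The paper's mechanism, sketched in its proof with details deferred to \cite{truongnew}, is: for $x$ outside the critical set $\mathcal{C}$, $\det\bigl(\nabla^2 f(x)+\delta\|\nabla f(x)\|^{1+\alpha}\mathrm{Id}\bigr)$ is a non-trivial polynomial in $\delta$, and the derivative data of $w(x,\delta)$ is rational in $\delta$ and non-degenerate as $\delta\to\infty$; a Fubini-type argument then yields a null set of bad $\delta$'s, and for $\delta_j$ outside it the map $F$ is, locally off $\mathcal{C}$ and off a null set, invertible with the control needed to conclude. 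Your proposed exceptional set $Z$ (the loci $\det A(x)=0$ and $\det DF(x)=0$, asserted null ``for a generic configuration of the $\delta_j$'') has the right shape, but without this polynomial/rational-in-$\delta$ argument the genericity assertion is unsupported, and it is the only place the randomness hypothesis of the theorem is actually used. (A smaller omission, not a gap: the $C^1$ dependence of $pr_{A(x),\pm}$ on $x$, which you assert, is justified in the paper via Kato's resolvent integral formula for spectral projections.)
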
 
Part 4) of Theorem \ref{TheoremMain} shows that generally the rate of convergence of the New Q-Newton's method is not worse than that of Gradient Descent method. Part 5) shows that we can find all non-degenerate local minima of $f$ by New Q-Newton's method. However, as will be seen later, it is not known whether New Q-Newton's method (and all other modifications of Newton's method) can guarantee convergence.  

As a consequence, we obtain the following interesting result. 
\begin{corollary}
Let $f$ be a $C^3$ function, which is Morse, that is all its critical points are non-degenerate (i.e. $\nabla ^2f$ is invertible at all critical points of $f$). Let $x_0$ be a random initial point, and let $\{x_n\}$ be a sequence constructed by the New Q-Newton's method, where the hyperparameters $\delta _0,\ldots ,\delta _m$ are randomly chosen. If $x_n$ converges to $x_{\infty}$, then $x_{\infty}$ is a local minimum and the rate of convergence is quadratic. 
\label{CorollaryMain}\end{corollary}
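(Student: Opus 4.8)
The plan is to deduce Corollary \ref{CorollaryMain} directly from Theorem \ref{TheoremMain}: the Morse hypothesis is exactly what supplies the second-order nondegeneracy assumption needed in parts 3)--4) of the theorem, once part 1) has identified $x_{\infty}$ as a critical point.

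\emph{Step 1.} I would first invoke part 1) of Theorem \ref{TheoremMain}: the standing hypothesis that $\{x_n\}$ converges to $x_{\infty}$ forces $\nabla f(x_{\infty})=0$, so $x_{\infty}$ is a critical point of $f$. Since $f$ is assumed Morse, every critical point is non-degenerate, hence $\nabla^2 f(x_{\infty})$ is invertible. Thus the hypothesis ``$\nabla^2 f(x_{\infty})$ is invertible'' appearing in parts 3) and 4) holds automatically at $x_{\infty}$, with no extra work.

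\emph{Step 2.} Next I would dispatch the randomness. Because the hyperparameters $\delta_0,\ldots,\delta_m$ are chosen randomly, part 2) of Theorem \ref{TheoremMain} furnishes a set $\mathcal{A}\subset\mathbb{R}^m$ of Lebesgue measure $0$ with the stated saddle-avoidance property. A random initial point $x_0$ (taken with respect to Lebesgue measure, or any absolutely continuous law, and independently of or jointly with the $\delta_i$) lies outside $\mathcal{A}$ with probability one, so we may assume $x_0\notin\mathcal{A}$. Now part 3) of Theorem \ref{TheoremMain}, applied with the invertibility of $\nabla^2 f(x_{\infty})$ established in Step 1, gives at once that $x_{\infty}$ is a local minimum of $f$ and that $||x_{n+1}-x_{\infty}||=O(||x_n-x_{\infty}||^2)$, i.e. the convergence rate is quadratic. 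That completes the argument.

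There is no genuine obstacle here: the entire mathematical content sits in Theorem \ref{TheoremMain}, and the Corollary is simply the remark that ``Morse'' erases the degeneracy caveat in parts 2)--3). The only point I would take care to state precisely is that the exceptional set $\mathcal{A}$, although it may depend on the realized values of $\delta_0,\ldots,\delta_m$, is a fixed null set for each such realization, so that a random choice of $x_0$ avoids it almost surely regardless of how the randomness in $\delta$ and in $x_0$ is coupled.
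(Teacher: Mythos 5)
Your derivation is correct and is exactly how the paper obtains the corollary: the paper offers no separate proof, treating it as an immediate consequence of Theorem \ref{TheoremMain}, with the Morse hypothesis supplying the invertibility of $\nabla^2 f(x_\infty)$ needed for part 3) and the random choice of $x_0$ placing it outside the null set $\mathcal{A}$ almost surely. Nothing further is needed.
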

We note, see Subsection \ref{Subsection6}, that even for Morse functions in dimension 1 the usual Newton's method can converge to a local maximum. 

Essential definitions and related works are detailed in Subsections \ref{Subsection2}, \ref{Subsection3} and \ref{Subsection4}. 

{\bf New contribution in this paper:} We propose a new way to modify the Hessian $\nabla ^2f(x_n)$ in Newton's method, by adding a matrix $\delta _n Id$, so that  the new matrix $\nabla ^2f(x_n)+\delta _nId$ is {\bf invertible}. (Note that this is different from previous work on quasi-Newton's methods where it is required that the new matrix is {\bf positive definite}, see Subsection \ref{Subsection4} for more details.) We then, in contrast to Newton's method, do not use the update $x_{n+1}=x_n-(\nabla ^2f(x_n)+\delta _nId)^{-1}\nabla f(x_n)$, but instead $x_{n+1}=x_n-w_n$ where $w_n$ is the reflection of $(\nabla ^2f(x_n)+\delta _nId)^{-1}\nabla f(x_n)$ via the vector subspace generated by eigenvectors of negative eigenvalues of $\nabla ^2f(x_n)+\delta _nId$. We also arrange so that $|\delta _n|$ is bounded by $||\nabla f(x_n)||$ when $||\nabla f(x_n)||$ is small. Note that our new algorithm uses crucially the fact that the Hessian $\nabla ^2f$ is symmetric. While our new method is {\bf very simple} to implement, its theoretical guarantee (avoidance of saddle points) is proven under quite general assumptions. This is different from the many other modifications of Newton's method in the literature, see Subsection \ref{Subsection3} for detailed comparisons. In particular, there is an active research direction \cite{goldfarb, gould-etal} of using (an approximation of) the eigenvector corresponding to the most negative eigenvalue of the Hessian coupled with Backtracking line search to avoid saddle points, but that method is more complicated to describe, and practically can be slower than our method (in particularly in small and medium dimensions) and they do not rigorously treat the case where the Hessian is not invertible. Besides, there is no result concerning cost functions satisfying Losjasiewicz gradient inequality, the latter being a large class of relevance to Deep Learning. See Subsection \ref{Subsection3} for details. 

{\bf The role of the randomness of the hyperparameters $\delta _0,\ldots ,\delta _m$:} Here we explain where the randomness of the hyperparameters $\delta _0,\ldots ,\delta _m$ is needed. As stated in Theorem \ref{TheoremMain}, this is used only in part 2  to assure that when we start with a random initial point, then if the sequence constructed by New Q-Newton's method converges, the limit cannot be a saddle point. More precisely, to assure this, there are two steps. Step 1: show the existence of local Stable - Center manifolds around saddle points. For this step, the randomness of $\delta _0,\ldots ,\delta _m$ is {\bf not} needed. Step 2: show that the preimage of the local Stable - Center manifolds by the associated dynamical system has zero Lebesgue measure. This is where, in the proof, the randomness of  $\delta _0,\ldots ,\delta _m$ is exactly needed.   

Here is a table summarising New Q-Newton's method, where $pr_{A_k,\pm}$ are linear projections to the direct sum of eigenspaces of positive eigenvalues and of negative eigenvalues of $A_k$, see Subsection \ref{Subsection5}. A variant, where we choose the $\delta _i$s' randomly at each step, which seems to behave better in particular in the stochastic setting, is detailed in Section \ref{SectionImplementation}.  

\medskip

{\color{blue}
 \begin{algorithm}[H]
\SetAlgoLined
\KwResult{Find a critical point of $f:\mathbb{R}^m\rightarrow \mathbb{R}$}
Given: $\Delta=\{\delta_0,\delta_1,\ldots, \delta_{m}\}$\ (chosen {\bf randomly}) and $\alpha >0$;
Initialization: $x_0\in \mathbb{R}^m$\;
 \For{$k=0,1,2\ldots$}{ 
    $j=0$\\
    \If{$\|\nabla f(x_k)\|\neq 0$}{
   \While{$\det(\nabla^2f(x_k)+\delta_j \|\nabla f(x_k)\|^{1+\alpha}Id)=0$}{$j=j+1$}}

$A_k:=\nabla^2f(x_k)+\delta_j \|\nabla f(x_k)\|^{1+\alpha}Id$\\
$v_k:=A_k^{-1}\nabla f(x_k)=pr_{A_k,+}(v_k)+pr_{A_k,-}(v_k)$\\
$w_k:=pr_{A_k,+}(v_k)-pr_{A_k,-}(v_k)$\\
$x_{k+1}:=x_k-w_k$
   }
  \caption{New Q-Newton's method} \label{table:alg}
\end{algorithm}
}

\medskip

\begin{remark}
 As the proof of the main results and the experiments show, in Algorithm 1 one does not need to have exact values of the Hessian, its eigenvalues and eigenvectors. Approximate values are good enough, for both theoretical and experimental purposes. 
 
 The augmentation term $||\nabla f(x_n)||^{1+\alpha}$ in Algorithm 1 simultaneously serves several purposes: 
 
 - It is a convenient term to add into $\nabla ^2f(x_n)$ to make sure that the resulting matrix is invertible, whenever $x_n$ is not a critical point of $f$.
 
 - Near a non-degenerate critical point, it becomes small compared to the main term $\nabla ^2f(x_n)$ coming from the original Newton's method, and hence Algorithm 1 basically reduces to Newton's method. 
 
 - Also, it is discovered in the recent work \cite{truong2021} by the first author that it also helps to keep the eigenvalues of the resulting matrix sufficiently large, and hence helps to resolve the convergence guarantee issue in New Q-Newton's method. 
 
 \end{remark}

The plan of this paper is as follows. In Section 2, we briefly review about gradient descent methods for continuous optimisation problems. In the same section, we also briefly review about variants of Newton's method, what is currently known in the literature about convergence and avoidance of saddle points of iterative methods, and large scale performance of them. In Section 3, we present the definition of New Q-Newton's method and the proof of Theorem \ref{TheoremMain}. There we will also briefly review a very recent result by the first author incorporating Backtracking line search to New Q-Newton's method, and explain how it can be used to quickly find roots of meromorphic functions. The last section presents about implementation details, some experimental results (including a toy model of protein folding  \cite{shh} and stochastic optimization) and finding roots of meromorphic functions in 1 variable, in comparison with various well known second order optimization methods such as Newton's method, BFGS, Adaptive Cubic Regularization, Random damping Newton's method and Inertial Newton's method, and a first order algorithm Unbounded Backtracking GD.  There also some conclusions and ideas for future work are presented.  To keep the paper succinct, we present many more experimental results in the appendix.

Given that it is very expensive to implement this method, and that the authors at the moment have no access to huge computing resources, we defer the implementation of our method to large scale optimisation as in Deep Neural Networks to future work, when such computing resources and further theoretical work on numerical implementation are available. We mention that a large scale implementation of the related algorithm in \cite{dauphin-pascanu-gulcehre-cho-ganguli-bengjo} is now available at the GitHub link \cite{SFO}, however results reported there are only competitive on small datasets and DNN such as for MNIST. See Section \ref{SubsectionLargeScale} for some more discussions.

\section{Overview of the literature}

\subsection{Brief review of gradient descent methods}
\label{Subsection2}

The general version of Gradient Descent (GD), invented by Cauchy in 1847 \cite{cauchy}, is as follows. Let $\nabla f(x)$ be the gradient of $f$ at a point $x$, and $||\nabla f(x)||$ its Euclidean norm in $\mathbb{R}^k$.  We choose randomly a point $x_0\in \mathbb{R}^k$ and define a sequence
\begin{eqnarray*}
x_{n+1}=x_n-\delta (x_n) \nabla f(x_n),
\end{eqnarray*}
where $\delta (x_n)>0$ (learning rate), is appropriately chosen. We hope that the sequence $\{x_n\}$ will converge to a (global) minimum point of $f$. 

The simplest and most known version of GD is Standard GD, where we choose $\delta (x_n)=\delta _0$ for all $n$, here $\delta _0$ is a given positive number. Because of its simplicity, it has been used frequently in Deep Neural Networks and other applications.  Another basic version of GD is (discrete) Backtracking GD, which works as follows. We fix real numbers $\delta _0>0$ and $0<\alpha ,\beta <1$. We choose $\delta (x_n)$ to be the largest number  $\delta $ among the sequence $\{\beta ^m\delta _0:~m=0,1,2,\ldots\}$ satisfying Amijo's condition \cite{armijo}:
\begin{eqnarray*}
f(x_n-\delta \nabla f(x_n))-f(x_n)\leq -\alpha \delta ||\nabla f(x_n)||^2.  
\end{eqnarray*}

There are also the inexact version of GD (see e.g. \cite{bertsekas, truong-nguyen1, truong-nguyen2}). More complicated variants of the above two basic GD methods include: Momentum, NAG, Adam,  for Standard GD (see an overview in \cite{ruder}); and Two-way Backtracking GD, Backtracking Momentum, Backtracking NAG  for Backtracking GD (first defined in \cite{truong-nguyen1, truong-nguyen2}). There is also a stochastic version, denoted by SGD, which is usually used to justify the use of Standard GD in Deep Neural Networks.  

For convenience, we recall that a function $f$ is in class $C^{1,1}_L$, if $\nabla f$ is globally Lipschitz continuous with the Lipschitz constant $L$. The latter means that for all $x,y\in \mathbb{R}^m$ we have $||\nabla f(x)-\nabla f(y)||\leq L||x-y||$. We note that it could be a difficult task to determine whether a function is in $C^{1,1}_L$ or real analytic (or more generally satisfying the so-called Losjasiewicz gradient inequality), while usually with only one quick glance one could get a very good guess whether a function is in $C^1$ or $C^2$ (conditions needed to guarantee good performance of modifications of Backtracking GD). 

Closely related to Backtracking GD is the so-called Wolfe's method \cite{wolfe, wolfe2}, where the learning rates are chosen not by Backtracking but by combining Armijo's condition with an additional condition regarding curvature. The idea was to overcome the fact that the original version of Backtracking GD requires the learning rates to be uniformly bounded from above. To this end, we note that in the recent work \cite{truong-nguyen1, truong-nguyen2}, learning rates in Backtracking GD are now allowed to be unbounded from above. Moreover, Wolfe's method does not work as well as Backtracking GD: its theoretical results can only proven for functions in class $C^{1,1}_L$, and there are no proven results on convergence to critical points or avoidance of saddle points as good as those for Backtracking GD (see Subsection \ref{Subsection4}).

\subsection{Brief review of the literature on (quasi-)Newton's method}
\label{Subsection3}

Another famous iterative optimisation method is Newton's method (\cite{Newton1} and Section 1.4 in \cite{bertsekas}). It applies to functions $f\in C^2$ with the update rule: if $\nabla ^2f(x_n)$ is invertible, then we define $x_{n+1}=x_n-(\nabla ^2f(x_n))^{-1}\nabla f(x_n)$. 

If $f(x)=\frac{1}{2}<Ax,x>$ where $A$ is an invertible symmetric matrix, then $\nabla f(x)=Ax$ and $\nabla ^2f(x)=A$. Therefore, for every initial point $x_0$, the next point in the update of Newton's method is $x_1=x_0-(\nabla ^2f(x_0))^{-1}\nabla f(x_0)=0$. Hence, in case $A$ has negative eigenvalues, Newton's method will converge to a saddle point $x_0=0$.  

Its main purpose is to find critical points of $f$. Its is most preferred because if it converges, then usually it converges very fast, with rate of convergence being quadratic. However, this comes with a cost: we need to compute second derivatives, and hence Newton's method is very costly when applied to huge scale optimisation problems. Also, it has several other undesirable features. First, as seen above, it can converge to saddle points or even local maxima. Second, there are examples (see Subsection \ref{Subsection6}) where Newton's method diverges to infinity even when the cost function has compact sublevels. Third, we can not proceed in Newton's method when $\nabla ^2f$ is not invertible. 

There are many modifications of Newton's methods, aiming at resolving the three issues mentioned in the end of the previous paragraph. Among them, most famous ones are so-called quasi-Newton's methods (\cite{Newton2} and Section 2.2 in \cite{bertsekas}). Some famous quasi-Newton's methods are:  BFGS, Broyden's family, DFP and SR1. There are two main ideas common in these methods. The first main idea is to add replace $\nabla ^2f(x_n)$ by some {\bf positive definite} matrices in a less expensive manner. The heuristic behinds this is try to have the inverse of $\nabla f(x)$ to be a descent direction, and hence trying to have the sequence constructed by these methods to converge to local minima only. (As mentioned in the introduction, our idea of New Q-Newton's method is different in that we require only that $\nabla ^2f(x_n)+B_n$ is invertible.) However, we are not aware of rigorous results where avoidance of saddle points are established for these modifications, under such general conditions as in the main results in this paper. This procedure also aims to resolve the case where $\nabla ^2f(x_n)$  is not invertible, whence Newton's method is not applicable. The second main idea is to replace the expensive computation of $\nabla ^2f(x_n)$ by using first order approximations. This second main idea can be used also to our New Q-Newton's method to reduce the cost so that it can be implementable in huge scale optimisation problems.   

Another class of modifications of Newton's methods is that of damping Newton's method. The simplest form of this method is the update rule: $x_{n+1}=x_n-\delta _n(\nabla f(x_n))^{-1}\nabla f(x_n)$, where $\delta _n>0$ is a real number. One can choose $\delta _n$ randomly at each step. To this end, we note the paper \cite{sumi}, where by methods in complex dynamics, it is shown that Random damping Newton's method can find {\bf all roots} of a complex polynomial in 1 variable, if we choose $\delta _n$ to be a {\bf complex} random number so that $|\delta _n-1|<1$, and the rate of convergence is the same as the usual Newton's method. It is hopeful that this result can be extended to systems of polynomials in higher dimensions. On the other hand, this result shows that again damping Newton's method is not effective in finding local minima, since it can converge to all critical points. On how Random damping Newton's method works with non-polynomial functions, the readers can see some experimental results. 

Yet another common class of modifications of Newton's methods is discretisation of some differential equations, taking inspiration from physics. From basic differential equations corresponding to the usual Newton's method, one can add more terms (representing some physical rules) and discretising to obtain modifications. One recent such modification is the so-called  Inertial Newton's method \cite{bolte-etal}. The experimental results available for this method is not yet competitive enough. We note that again there is no theoretical guarantee that this method is effective in finding local minima, and also that its rate of convergence is not as fast as the usual Newton's method but rather comparable to that of Gradient Descent methods. 

Next, we will compare New Q-Newton's method to a couple of specific relevant methods. The first is a method in \cite{goldfeld-etal} (which has some relations to the Levenberg-Marquardt method). It proposes to add a term $-\lambda _1(x)+R||\nabla f(x)||$ to $\nabla ^2f(x)$, where $\lambda _1(x) $ is the smallest eigenvalue of $\nabla ^2f(x)$, and $R>0$ is chosen by some rules. While the term $||\nabla f(x)||$ is similar to the term $\delta _i||\nabla f(x)||^{1+\alpha} $ in New Q-Newton's method, the method in \cite{goldfeld-etal} is of a heuristic nature, and rigorous theoretical guarantee is given only for the case where the cost function is a quadratic function. The other relevant method is that of cubic regularization of Newton's method (which is relevant to the popular truth region method), see \cite{nesterov-polyak}. In \cite{nesterov-polyak}, the method is defined for the (restrictive) class of cost functions $f$ whose Hessian is {\bf globally Lipschitz continuous}, where at each iteration an optimal subproblem on the whole space $\mathbb{R}^m$ is requested. (The precise solution of this optima subproblem requires knowing eigenvalues and eigenvectors of the Hessian.) Under this restriction, it is shown that the sequence constructed has the descent property $f(x_{n+1})\leq f(x_n)$, and generalised saddle points can be avoided. Under some further restrictions, then convergence is also guaranteed. This method has been extended to more general functions in \cite{cartis-etal}, where the update rule is more complicated to describe and still need a similar optimal subproblem at each step. However, theoretical guarantees for this extension are weaker and are only provided under restrictive or practically difficult to check conditions, and experiments (see below) performed on an implementation of its \cite{ARCGitHub} do not show better performance than other algorithms. Moreover, we do not know if the cubic regularization method has yet a workable extension to Riemannian manifolds (only under some very restrictive assumptions, such as requiring that the cost function is in $C^{1,1}_L$ - this latter condition being quite cumbersome to define on Riemannian manifolds, that such versions exist or have good theoretical properties). In comparison, the first author of the current paper has been successful to extend New Q-Newton's method to Riemannian manifolds setting, see \cite{truongnew}.  

There is an active research direction mixing between Backtracking GD and Newton's method worth mentioning \cite{goldfarb, gould-etal} (our analysis below applies generally to the more recent works in this direction as well).  Since this seems to be the most relevant to New Q-Newton's method, we will provide a rather detailed analysis. The idea is, if at the point $x_n$ the Hessian $\nabla ^2f(x_n)$ has a negative eigenvalue, then one can also explore the (approximation of the) eigenvector  $d_n$ corresponding to the smallest eigenvalue, in addition to a gradient-like direction $s_n$. Here, it is assumed that there are 2 constants $c_1,c_2>0$ so that $<s_n,\nabla f(x_n)> \geq c_1||\nabla f(x_n))||^2$ and $||s_n||\leq c_2||\nabla f(x_n)||$ for all $n$. Note that checking these two conditions can be non-trivial, and can make computations expensive.  They choose $p_n=d_n$ or $s_n$ depending on whether a test is satisfied, and add a quadratic term into Armijo's condition, checking one condition of the form (some variants choose instead $p_n=$ a non-trivival linear combination of $s_n$ and $d_n$, and employ the usual Armijo's condition, see e.g. \cite{gould-etal} for details):  
\begin{equation}
f(x_n-\delta p_n)- f(x_n)\leq -\alpha [\delta <p_n,\nabla f(x_n)>+\frac{1}{2}\delta ^2 \min \{0,<\nabla ^2f(x_n)p_n,p_n>\}]. 
\label{EquationGoldfarb}\end{equation}
(Note that the $\delta$ satisfying this stronger inequality could be smaller than the one satisfying Armijo's condition, and hence practically this method can be {\bf slower} than if one uses Armijo's condition only.) The pro of this algorithm is that, because of the additional direction $d_n$, it can be shown that any {\bf cluster point} of the sequence $\{x_n\}$ cannot be a generalised saddle point. Moreover, one does not need to require that the initial point is {\bf randomly chosen} (but this condition is not essential, since if one wants to attain a good point at the end, then one better chooses a random initial point).  However, the addition of this $d_n$ is also a weak point of this method, as we will describe next.  

Note that in \cite{gould-etal}, while there is a statement that any cluster point of the sequence constructed by their algorithm is a critical point of the cost function, there is no explicit statement about condition for which the whole sequence converges to a unique limit point.   Here $s_n$' are chosen in two common classes, and we will separately analyse them. In Case 1, $s_n$ is an approximation of  the gradient $\nabla f(x_n)$. Then, because $d_n$ has almost no relation to $\nabla f(x_n)$, except the condition that $<d_n,\nabla f(x_n)>\geq 0$ (note, here we use learning rate $>0$, hence changing the size of $d_n$ from that in \cite{gould-etal}), there is no evidence that this algorithm has strong theoretical guarantee for cost functions satisfying the Losjasiewicz gradient inequality as Backtracking GD methods, see next section. Moreover, a modification of Backtracking GD, using only the (approximation of the) gradient direction and with a more carefully chosen learning rate, also can avoid generalised saddle points if one starts from a random initial point, see Theorem .\ref{Theorem1}. The mentioned theorem also gives support that Backtracking GD itself can avoid generalised saddle points, if one starts from a random initial point. Hence, both from theoretical and practical viewpoints, there is no real advantage of using the methods in \cite{goldfarb, gould-etal} over the usual Backtracking GD.  In Case 2, $s_n$' are chosen as Newton's like direction. In this case, also truncated calculations are used to apply to large scale. However, there are several disadvantages. First, it is not stated clearly how the algorithm deals with the case the Hessian is not invertible. New Q-Newton's method deals with this in a simple manner. Second,  if one needs a quadratic rate of convergence result for this method, then one needs to choose $s_n$ like $\nabla ^2f(x_n)^{-1}.\nabla f(x_n)$ near a non-degenerate local minimum, and the two conditions  $<s_n,\nabla f(x_n)> \geq c_1||\nabla f(x_n))||^2$ and $||s_n||\leq c_2||\nabla f(x_n)||$ are not enough. To this end, the truncated calculations are generally not enough to guarantee this, and hence a full calculations of eigenvectors and eigenvalues, as in New Q-Newton's method, will be needed. Then, near a degenerate critical point, the postulation about the existence of two constants $c_1,c_2$ satsifying $<s_n,\nabla f(x_n)> \geq c_1||\nabla f(x_n))||^2$ and $||s_n||\leq c_2||\nabla f(x_n)||$ for all $n$ cannot be fulfilled. In New Q-Newton's method we do not postulate this. Also, again because of the appearance of $d_n$, there is no guarantee about convergence of this method for cost functions satisfying the Losjasiewicz gradient inequality. See Subsection \ref{SectionRootMeromorphicFunction}  for some results which can be proven by New Q-Newton's method Backtracking \cite{truong2021} concerning these conditions. Hence, in this case, from both theoretical and practical viewpoints again, at least in medium-sized problems where calculating eigenvectors and eigenvalues of a square symmetric matrix is possible in a reasonable time, there is no real advantage of using  the concerned method over New Q-Newton's method Backtracking.

\subsection{Brief review of literature on convergence to critical points and avoidance of saddle points}
\label{Subsection4}

Here we provide a very brief review of the currently known most general results on performance of iterative methods, regarding convergence to critical points and avoidance of saddle points. More details to special cases can be found in the references mentioned here and references therein. 

{\bf Convergence to critical points:} We recall that a function $f$ is Morse if it is $C^2$, and if all of its critical points are non-degenerate (that is, if $\nabla f(x_0)=0$ then $\nabla ^2f(x_0)$ is invertible). By transversality results, Morse functions are dense in the set of all continuous functions. In other words, if we choose a random $C^2$ function, then it is Morse. We note also that the set of critical points of a Morse function is discrete. The following result (\cite{truong-nguyen1, truong-nguyen2, truong, truong2, truong3}) illustrates the good features of modifications of Backtracking GD: If $f$ is a Morse function, and $\{x_n\}$ is a sequence constructed by the Backtracking GD (or one of its various modifications), then either $\lim _{n\rightarrow\infty}||x_n||=\infty$ or there is $x_{\infty}$ so that $\lim _{n\rightarrow\infty}x_n=x_{\infty}$ and $\nabla f(x_{\infty})=0$. In the general case, where $f$ is only assumed to be $C^1$, it is shown in the mentioned papers that if the set of cluster points $\mathcal{D}$ of $\{x_n\}$ intersects one compact component of $\mathcal{C}=$ critical points of $f$, then $\mathcal{D}$ is connected and is contained in that compact component. This result also extends to functions defined on Banach spaces \cite{truong4}. To date, we do not know any other iterative methods whose convergence is as strongly guaranteed as Backtracking GD. 

For some special but interesting classes of functions, the corresponding results have been known much earlier. For example, in the case $f$ is in $C^{1,1}_L$ and has compact sublevels, the corresponding results are classical, and can be found as early as in Chapter 12 in \cite{lange}.  When the function $f$ is real analytic (or more generally satisfying the so-called Losjasiewicz gradient inequality), then we obtain the strongest form of convergence guarantee where no assumptions on the set of critical points are needed \cite{absil-mahony-andrews}.    

{\bf Avoidance of saddle points:} Besides minima, other common critical points for a function are maxima and saddle points. In fact, for a $C^2$ cost function, a non-degenerate critical point can only be one of these three types. While maxima are rarely a problem for descent methods, saddle points can theoretically be problematic, as we will present later in this subsection. Before then, we recall definitions of saddle points and generalised saddle points for the sake of unambiguous presentation. Let $f:\mathbb{R}^k\rightarrow \mathbb{R}$ be a $C^1$ function. Let $x_0$ be a critical point of $f$ near it  $f$ is $C^2$. 

{\bf Saddle point.} We say that $x_0$ is a saddle point if the Hessian $\nabla ^2f(x_0)$ is non-singular and has both positive and negative eigenvalues. 

{\bf Generalised saddle point.} We say that $x_0$ is a {\bf generalised} saddle point if the Hessian $\nabla ^2f(x_0)$ has at least one negative eigenvalue. Hence, this is the case for a non-degenerate maximum point.  

In practical applications, we would like the sequence $\{x_n\}$ to converge to a minimum point.  It has been shown in \cite{dauphin-pascanu-gulcehre-cho-ganguli-bengjo} via experiments that for cost functions appearing in DNN the ratio between minima and other types of critical points becomes exponentially small when the dimension $k$ increases, which illustrates a theoretical result for generic functions  \cite{bray-dean}. Which leads to the question: Would in most cases an iterative algorithm  converge to a minimum? 

To this question, again so far Backtracking GD and its modifications provide the best answer. For the special case of functions in class $C^{1,1}_{L}$, it is shown in \cite{lee-simchowitz-jordan-recht, panageas-piliouras} that if the initial point $x_0$ is outside a set of Lebesgue's measure $0$ then for the sequence $\{x_n\}$ constructed by Standard GD, with fixed learning rate $\delta <1/L$, if $x_n$ {\bf does converge} to a point $x_{\infty}$ then $x_{\infty}$ cannot be  a generalised saddle point. This result has been more recently extended in \cite{truong} to functions $f$ satisfying the more general assumption that $\nabla f$ is {\bf locally} Lipschitz continuous (for example, this is satisfied when $f$ is in $C^2$), by replacing Standard GD by Backtracking GD. The result is also valid more generally for functions defined on Banach spaces, see \cite{truong4}. By using the convergence results in \cite{absil-mahony-andrews, truong-nguyen1, truong-nguyen2}, one immediately obtain the following result,  (which as far as we know, is the strongest theoretical guarantee for iterative methods in the contemporary literature) - mentioned also in \cite{truong-nguyen2}:

\begin{theorem}
\label{Theorem1}

If one applies the variant of Backtracking GD in \cite{truong} to a cost function $f:\mathbb{R}^m\rightarrow \mathbb{R}$, which either has at most countably many critical points or satisfies the Losjasiewicz gradient inequality, then for a random initial point $x_0$, the sequence $x_n$ constructed either diverges to infinity or converges to a critical point of $f$. In the latter case, the limit point cannot be a generalised saddle point. 

\end{theorem}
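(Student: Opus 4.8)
The plan is to combine two ingredients, each already available in the works cited above, so that the theorem follows "for free" once they are juxtaposed. The first ingredient is a convergence dichotomy for the Backtracking GD variant of \cite{truong}: under either hypothesis on $f$, every sequence $\{x_n\}$ produced by the algorithm either satisfies $\lim_{n\to\infty}\|x_n\|=\infty$ or converges to a single point $x_{\infty}$ with $\nabla f(x_{\infty})=0$. When $f$ has at most countably many critical points this follows from \cite{truong-nguyen1, truong-nguyen2}: the set of cluster points of $\{x_n\}$ is connected and contained in the critical set, hence is a single point. When $f$ satisfies the Losjasiewicz gradient inequality it follows from the Losjasiewicz-type argument in \cite{absil-mahony-andrews}, where the finite-length property of the orbit yields convergence with no hypothesis on the critical set. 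This reduces the theorem to a single claim: the set $\mathcal{A}$ of initial points $x_0$ whose orbit converges to a generalised saddle point has Lebesgue measure zero.

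The second ingredient is a stable-center manifold argument, in the spirit of \cite{lee-simchowitz-jordan-recht, panageas-piliouras, truong}. First I would record the local structure of the update map near a critical point: in a small neighbourhood $U$ of a critical point $x^{*}$ the backtracking procedure eventually selects a learning rate bounded away from $0$ (and, locally, from above), controlled by the local Lipschitz constant of $\nabla f$, so that on $U$ the update coincides with $x\mapsto H(x):=x-\delta(x)\nabla f(x)$ with $H$ of class $C^{1}$ and $DH(x^{*})=\mathrm{Id}-\delta(x^{*})\nabla^{2}f(x^{*})$. If $x^{*}$ is a generalised saddle point then $\nabla^{2}f(x^{*})$ has a negative eigenvalue, so $DH(x^{*})$ has an eigenvalue strictly greater than $1$; hence the local stable-center manifold $W^{sc}_{\mathrm{loc}}(x^{*})$ has positive codimension and so Lebesgue measure zero, and any orbit converging to $x^{*}$ lies, from some index on, on $W^{sc}_{\mathrm{loc}}(x^{*})$.

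Next I would globalize. Choosing the backtracking parameters so that $DH$ is invertible wherever $H$ is regular, $H$ is a local diffeomorphism, hence $H^{-1}$ sends Lebesgue-null sets to Lebesgue-null sets. Therefore the set of initial points whose orbit eventually reaches $W^{sc}_{\mathrm{loc}}(x^{*})$ is contained in $\bigcup_{n\ge 0}H^{-n}\bigl(W^{sc}_{\mathrm{loc}}(x^{*})\bigr)$, a countable union of null sets, hence null. Finally I would take the union over all generalised saddle points: in the countable-critical-point case this is a countable union and is still null; in the Losjasiewicz case I would cover the (possibly uncountable) set of generalised saddle points by countably many of the neighbourhoods $U$ above, using that $\mathbb{R}^{m}$ is Lindel\"of, and take the union of the corresponding null sets. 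Setting $\mathcal{A}$ equal to this union proves the claim, and combined with the dichotomy of the first paragraph gives the theorem.

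The main obstacle is the global regularity of the backtracking map $H$: a priori the learning rate selected by Armijo's rule is only a piecewise-defined, possibly discontinuous function of $x$, so both the $C^{1}$ regularity and the local-diffeomorphism property require care. The remedy, as in \cite{truong}, is to work purely locally near critical points, where the Armijo exponent stabilizes to a value fixed by the local geometry (the ratio of the local Lipschitz constant of $\nabla f$ to the backtracking parameters $\alpha,\beta$), so that on a sufficiently small neighbourhood $H$ genuinely is $C^{1}$ with invertible derivative; far from critical points no regularity is needed because orbits there are not converging. A secondary subtlety is that a generalised saddle point need not be non-degenerate, so one must use the stable-\emph{center} manifold rather than the stable manifold; but a single negative eigenvalue of $\nabla^{2}f(x^{*})$ already forces a genuine expanding direction of $DH(x^{*})$, so $W^{sc}_{\mathrm{loc}}(x^{*})$ still has codimension at least one, which is all the argument needs.
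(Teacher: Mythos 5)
Your proposal matches the paper's treatment: the paper offers no self-contained proof of this theorem, presenting it as an immediate consequence of the convergence results of \cite{absil-mahony-andrews, truong-nguyen1, truong-nguyen2} together with the saddle-avoidance result of \cite{truong}, and your two ingredients are exactly these (convergence dichotomy, then stable-center manifold plus null preimages). The one substantive place where your sketch diverges from what \cite{truong} actually does is the mechanism for pulling back null sets: you ask for the update map $H$ to be $C^1$ with invertible derivative near critical points, so that $H$ is a local diffeomorphism, whereas the variant in \cite{truong} only achieves a locally Lipschitz update map, and the null-preimage property is obtained by showing that this Lipschitz map has locally bounded torsion --- a point the present paper itself highlights when adapting the argument to New Q-Newton's method. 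Your supporting claim that the Armijo exponent ``stabilizes'' near a critical point is the weak link: for genuine discrete backtracking the selected learning rate can oscillate between adjacent values $\beta^{m}\delta_0$ and $\beta^{m+1}\delta_0$ in every neighbourhood of a point, which is precisely why \cite{truong} constructs a special variant with a better-behaved learning-rate function. Since the theorem is stated for that variant and you explicitly defer to \cite{truong} for this step, the gap is absorbed by the citation; but as written, the local $C^1$ regularity and invertibility of $DH$ are not justified for plain backtracking GD, and the safer route is the Lipschitz-plus-bounded-torsion argument of \cite{truong}.
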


For Morse cost functions, the combination between New Q-Newton's method and Backtracking line search obtains the best theoretical guarantee for iterative optimization methods in the literature, see \cite{truong2021} and Subsection \ref{SectionRootMeromorphicFunction}  for details. 

\subsection{Large scale performance}
\label{SubsectionLargeScale}

In any event, large scale implementation in the current literature of modifications of Newton's method does not seem to function or competitive for datasets larger than MNIST, and even for MNIST it seems does not have any comprehensive comparison/evaluation on performance (in particular, on important indicators such as validation accuracy or running time) with Gradient descent methods (including Backtracking gradient descent methods) reported in the literature. 

Indeed, modern DNN are trained by Gradient descent methods, and popular among them are SGD, NAG, Momentum, Adam, Adamax and so on. There have been many experiments showing that with a good choice of learning rate, SGD can perform better than adaptive methods such as Adam. On the other hand, all just mentioned algorithms depend heavily on a good choice of learning rate: if one does not carefully choose a good learning rate, then the performance can be very poor. This leads to a lot of tricks about manual fine tuning of learning rates in the literature. 

Recently (since August 2018), two authors of the current paper have developed various new theoretical results and practical implementations of Backtracking GD, with very good performance, see \cite{truong-nguyen1, truong-nguyen2} for details, and see also the more recent work \cite{vaswani-etal} for similar implementations and experimental results. We also have combined Backtracking GD with other algorithms such as Momentum or NAG. A special feature of the newly developed algorithms (named MBT-GD, MBT-MMT and MBT-NAG) is that they are very stable with respect to the choice of initial learning rate $\delta _0$. Even with models which are not strong enough for a given problem, such as LeNet for CIFAR10, these new algorithms still work quite well and stable. To illustrate, we present in below some experimental results reported in \cite{truong-nguyen1, truong-nguyen2}, see Table \ref{tab:optimizers} and Figures \ref{fig:lr_attenuation_mini} and \ref{fig:BestValuation}. 

\begin{table}[htp]
\fontsize{10}{6}\selectfont
  \centering
  \begin{tabular}{|l|c|c|c|c|c|c|c|c|c|c|}
  \hline
~~~~~~~Learning rates	& $100$   & $10$    & $1$     & $10^{-1}$&$10^{-2}$ &$10^{-3}$& $10^{-4}$&$10^{-5}$	&$10^{-6}$\\
\hline
SGD & $10.00$ & $89.47$ & $91.14$ & \it{92.07} & $89.83$ & $84.70$ & $54.41$ & $28.35$ & $10.00$\\
MMT & $10.00$ & $10.00$ & $10.00$ & \it{92.28} & $91.43$ & $90.21$ & $85.00$ & $54.12$ & $28.12$\\
NAG & $10.00$ & $10.00$ & $10.00$ & \it{92.41} & $91.74$ & $89.86$ & $85.03$ & $54.37$ & $28.04$\\
\hline
Adagrad & $10.01$ & $81.48$ & $90.61$ & $88.68$ & \it{91.66} & $86.72$ & $54.66$ & $28.64$ & $10.00$\\
Adadelta & $91.07$ & $92.05$ & \it{92.36} & $91.83$ & $87.59$ & $73.05$ & $46.46$ & $22.39$ & $10.00$\\
RMSprop & $10.19$ & $10.00$ & $10.22$ & $89.95$ & $91.12$ & \it{91.81} & $91.47$ & $85.19$ & $65.87$\\
Adam & $10.00$ & $10.00$ & $10.00$ & $90.69$ & $90.62$ & \it{92.29} & $91.33$ & $85.14$ & $66.26$\\
Adamax & $10.01$ & $10.01$ & $91.27$ & $91.81$ & \it{92.26} & $91.99$ & $89.23$ & $79.65$ & $55.48$\\
\hline
MBT-GD  & \multicolumn{9}{c|}{\it{91.64}}\\
MBT-MMT & \multicolumn{9}{c|}{\it{93.70}}\\
MBT-NAG & \multicolumn{9}{c|}{\bf{93.85}}\\
 \hline
  \end{tabular}
   \caption{Best validation accuracy for CIFAR10 on Resnet18 after $200$ training epochs (batch size $200$) of different optimisers using different starting learning rates (MBT methods, being stable with starting learning rate, only use starting learning rate $10^{-2}$ as default). This table is taken from \cite{truong-nguyen1}.}  
  \label{tab:optimizers}
\end{table}

\begin{figure}
\centering
        \begin{subfigure}[b]{0.5\textwidth}
        
         \includegraphics[width=\linewidth ]{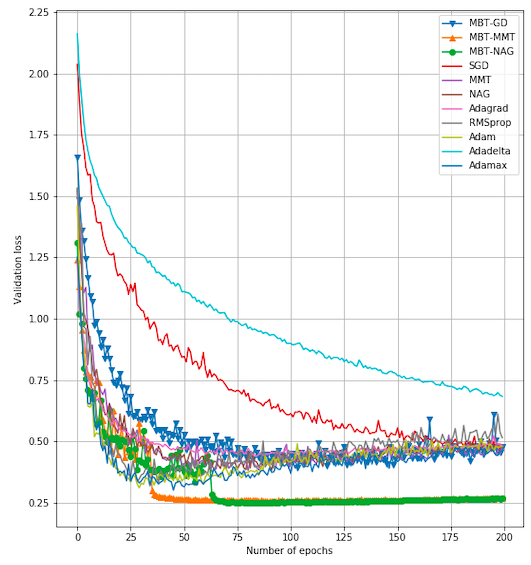}
              \caption{Validation loss for different algorithms, for CIFAR10 on Resnet18, mini-batch size 200.}  \label{fig:val_loss}
                      \end{subfigure}%
        \begin{subfigure}[b]{0.5\textwidth}
              
               \includegraphics[width=\linewidth ]{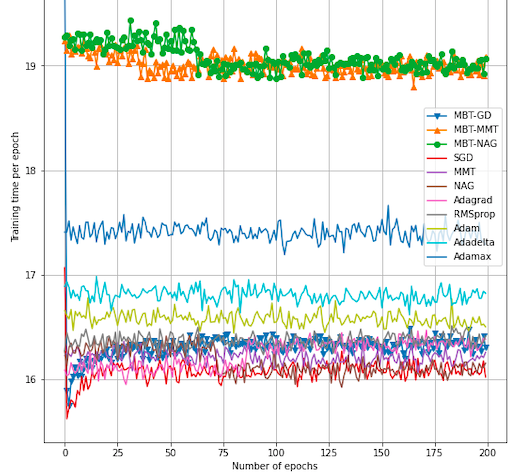}
              \caption{Training time (in seconds) per epoch for different algorithms, for CIFAR10 on Resnet18, mini-batch size 200. }  \label{fig:train_time}

                      \end{subfigure}%
        \caption{The actual training time from scratch for SGD, MMT, NAG, Adagrad, RMSProp, Adam, Adadelta and Adamax must be a high multiple of what reported here, in \ref{fig:train_time}, since these methods need manual fine-tune of hyperparameters to achieve good performance. This figure is taken from \cite{truong-nguyen2}, and has been produced in collaboration with Torus Actions SAS.} 
       \label{fig:lr_attenuation_mini}
\end{figure}

\begin{figure}
\centering

         \includegraphics[width=\linewidth ]{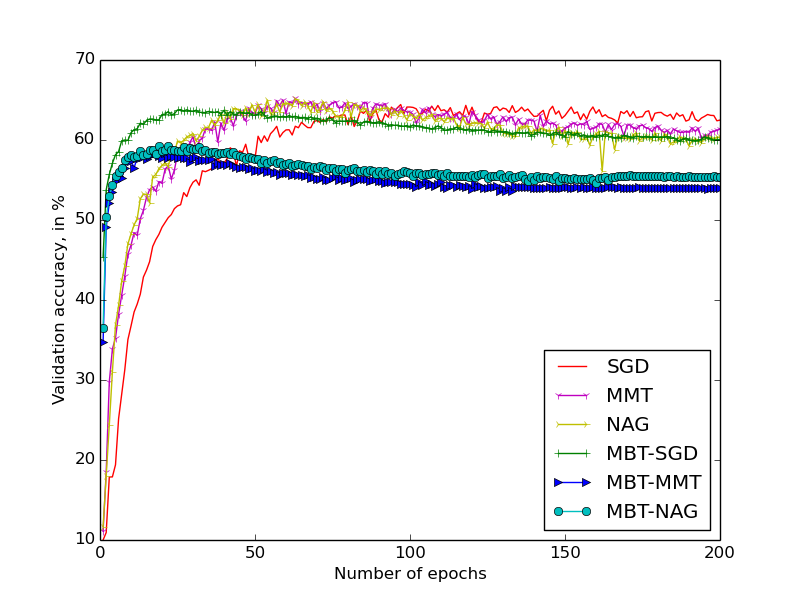}
            \caption{The evolution of validation accuracy in a training run, for both Non-Backtracking methods (SGD, MMT and NAG) and the corresponding Backtracking versions (MBT-SGD, MBT-MMT and MBT-NAG), for CIFAR10 on LeNet, for the same choice of mini-batch size 32 and normalisation as in \cite{bolte-etal}. For each method, we choose the best run among 5 random runs to report. The learning rate for Non-Backtracking methods is fixed to be $1e-2$ (which are found by a grid search to be very good for these methods). The initial learning rate for Backtracking methods is $1$. The momentum hyperparameter, for (MBT-)MMT and (MBT-)NAG methods is $\gamma =0.5$. This figure is taken from \cite{truong-nguyen2}.}
       \label{fig:BestValuation}
\end{figure}

\section{A new modification of Newton's methods: main results, proofs, and an application in meromorphic functions root finding}
We first give details of New Q-Newton's method and its main theoretical properties and their proofs. Then we review a new modification by the first author \cite{truong2021}, and discuss how to use it to quickly find roots of meromorphic functions in 1 complex variable.

\subsection{A new modification of Newton's methods and Main results}
\label{Subsection5}

We first recall a useful fact in Linear Algebra. Let $A$ be a symmetric $m\times m$ matrix with real entries. Then all eigenvalues of $A$  are real, and $A$ is diagonalisable. In fact,  there is an orthogonal matrix $Q$ so that $Q^TAQ$ is diagonal. In particular, if we let $\mathcal{E}^{\geq 0}(A)\subset \mathbb{R}^m$ (correspondingly $\mathcal{E}^{-}(A)\subset \mathbb{R}^m$) be the vector subspace generated by eigenvectors with non-negative eigenvalues of $A$ (correspondingly the vector subspace generated by eigenvectors with negative eigenvalues of $A$), then we have an orthogonal decomposition $\mathbb{R}^m=\mathcal{E}^{\geq 0}(A)\oplus \mathcal{E}^{-}(A)$, with respect to the usual inner product on $\mathbb{R}^m$. In particular, any $x\in \mathbb{R}^m$ can be written uniquely as $x=pr_{A,\geq}(x)+pr_{A,-}(x)$ where $pr_{A,\geq}(x)\in \mathcal{E}^{\geq 0}(A)$  and $pr_{A,-}(x)\in \mathcal{E}^{-}(A)$. 

In the situation of the above paragraph, if moreover $A$ is invertible, then all eigenvalues of $A$ are nonzero, and we denote in this case $\mathcal{E}^{+}(A)=\mathcal{E}^{\geq 0}(A)$ and $pr_{A,+}(x)=pr_{A,\geq 0}(x)$ for clarity. It is also worthwhile to note that $pr_{A,+}=pr_{A^{-1},+}$ and similarly $pr_{A,-}=pr_{A^{-1},-}$.

Now we are ready to  present our new modification of quasi-Newton's methods.

{\bf New Q-Newton's method.}  Let $\Delta =\{\delta _0,\delta _1,\delta _2,\ldots \}$ be a countable set of real numbers which has at least $m+1$ elements. Let $f:\mathbb{R}^m\rightarrow \mathbb{R}$ be a $C^2$ function. Let $\alpha >0$. For each $x\in \mathbb{R}^m$ such that $\nabla f(x)\not=0$, let $\delta (x)=\delta _j$, where $j$ is the smallest  number so that $\nabla ^2f(x)+\delta _j||\nabla f(x)||^{1+\alpha}Id$ is invertible. (If $\nabla f(x)=0$, then we choose $\delta (x)=\delta _0$.) Let $x_0\in \mathbb{R}^m$ be an initial point. We define a sequence of $x_n\in \mathbb{R}^m$ and invertible and symmetric $m\times m$ matrices $A_n$ as follows: $A_n=\nabla ^2f(x_n)+\delta (x_n) ||\nabla f(x_n)||^{1+\alpha}Id$ and $x_{n+1}=x_n-w_n$, where $w_n=pr_{A_n,+}(v_n)-pr_{A_n,-}(v_n)$ and $v_n=A_n^{-1}\nabla f(x_n)$. 

{\bf Remarks.} For to choose the set $\Delta$, we can do as in Backtracking GD: Let $\zeta _0>0$ and $0<\beta <1$, and define $\Delta =\{\beta ^n\zeta _0:~n=0,1,2,\ldots \}$. 

Note that if $\delta _0 =0$, then at points $x_n$ where $\nabla ^2f(x_n)$ is invertible, $A_n=\nabla ^2f(x_n)$. To ensure $\delta _0=0$, we can modify the construction of $\Delta$ in the previous paragraph as follows: $\Delta =\{\beta ^n\zeta _0-\zeta _0:~n=0,1,2,\ldots \}$. 

The following simple lemma is stated to emphasise the finiteness feature of the function $\delta (x)$ in the definition of New Q-Newton's method. 
\begin{lemma}
1) For all $x\in \mathbb{R}^m$, we have  $\delta (x)\in \{\delta _0,\ldots ,\delta _m\}$. 

2) If $x_{\infty}\in \mathbb{R}^m$ is such that $\nabla f(x_{\infty})=0$ and $\nabla ^2f(x_{\infty})$ is invertible, then for $x$ close enough to $x_{\infty}$ we have that $\delta (x)=\delta _0$. 
\label{Lemma1}\end{lemma}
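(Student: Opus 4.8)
The plan is to treat the two parts independently: part 1 is a counting argument on the eigenvalues of the Hessian, and part 2 is a short continuity argument. Neither part presents a genuine obstacle; the lemma is elementary, and the only care needed is in handling the defining convention $\delta(x)=\delta_0$ when $\nabla f(x)=0$.

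For part 1, I would first dispose of the trivial case: if $\nabla f(x)=0$ then $\delta(x)=\delta_0$ by definition, so there is nothing to prove. Hence assume $\nabla f(x)\neq 0$ and put $t=\|\nabla f(x)\|^{1+\alpha}$, a strictly positive real number. Write $B=\nabla^2 f(x)$; this is a real symmetric $m\times m$ matrix, so it has at most $m$ distinct (real) eigenvalues. The matrix $B+\delta_j t\,Id$ fails to be invertible precisely when $-\delta_j t$ is an eigenvalue of $B$. Since the elements of $\Delta$ are distinct and $t\neq 0$, the $m+1$ real numbers $-\delta_0 t,-\delta_1 t,\ldots,-\delta_m t$ are pairwise distinct, so at most $m$ of them can be eigenvalues of $B$. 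Therefore at least one index $j\in\{0,1,\ldots,m\}$ makes $B+\delta_j t\,Id$ invertible, and in particular the smallest such index is $\leq m$, giving $\delta(x)\in\{\delta_0,\ldots,\delta_m\}$.

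For part 2, the key point is that $\alpha>0$, so $s\mapsto s^{1+\alpha}$ is continuous on $[0,\infty)$ and vanishes at $0$. Since $f$ is $C^2$ and $\nabla f(x_\infty)=0$, continuity of $\nabla f$ gives $\nabla f(x)\to 0$, hence $\|\nabla f(x)\|^{1+\alpha}\to 0$, as $x\to x_\infty$; combined with continuity of $\nabla^2 f$, this yields $\nabla^2 f(x)+\delta_0\|\nabla f(x)\|^{1+\alpha}Id\to \nabla^2 f(x_\infty)$. The determinant is a polynomial, hence continuous, in the matrix entries, and $\det\nabla^2 f(x_\infty)\neq 0$ by hypothesis, so $\det\bigl(\nabla^2 f(x)+\delta_0\|\nabla f(x)\|^{1+\alpha}Id\bigr)\neq 0$ for all $x$ in a sufficiently small neighbourhood $U$ of $x_\infty$. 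For $x\in U$ with $\nabla f(x)\neq 0$ the index $j=0$ already makes the matrix invertible, so $\delta(x)=\delta_0$; and for $x\in U$ with $\nabla f(x)=0$ we also have $\delta(x)=\delta_0$ by convention. This proves part 2.

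\textbf{Expected main obstacle.} There is essentially none: the statement is routine. The only subtleties worth flagging explicitly in the write-up are (i) the edge case $\nabla f(x)=0$, which is handled by the defining convention and appears in both parts, and (ii) the fact that in part 1 one must use both the distinctness of the $\delta_j$ and $t\neq 0$, since if $t=0$ all the shifted matrices would equal $B$ and the argument would collapse.
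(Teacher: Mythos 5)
Your proposal is correct and follows essentially the same route as the paper: part 1 is the same pigeonhole/counting argument on the at most $m$ eigenvalues of the symmetric Hessian (you merely spell out the step that the shifts $-\delta_j\|\nabla f(x)\|^{1+\alpha}$ are pairwise distinct, which the paper leaves implicit), and part 2 is the same continuity-of-the-determinant argument near $x_\infty$. No gaps; nothing further is needed.
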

 \begin{proof}
 1) If $\nabla f(x)=0$, then by definition we have $\delta (x)=\delta _0 \in \{\delta _0,\ldots ,\delta _m\}$ as claimed. In the case $\nabla f(x)\not= 0 $, then since $\nabla ^2f(x)$ has only $m$ eigenvalues, for at least one $\delta$ among $ \{\delta _0,\ldots ,\delta _m\}$ we must have $\nabla ^2f(x)+\delta ||\nabla f(x)||^2Id$ is invertible. Therefore, we have again that $\delta (x)\in  \{\delta _0,\ldots ,\delta _m\}$.
 
 2) For $x$ close enough to $x_{\infty}$, we have that $||\nabla f(x)||$ is small. Hence, since $\nabla ^2f(x_{\infty})$ is invertible, it follows that $\nabla ^2f(x)+\delta _0||\nabla f(x)||^2Id$ is invertible. Hence, by definition, for these $x$ we have $\delta (x)=\delta _0$. 
 
 \end{proof}

Now we are ready to prove Theorem \ref{TheoremMain}. 

\begin{proof}[Proof of Theorem \ref{TheoremMain}]

1) Since $\lim _{n\rightarrow\infty}x_n=x_{\infty}$, we have $w_n=x_{n+1}-x_n\rightarrow 0$. Moreover, $\nabla ^2f(x_n)\rightarrow \nabla ^2f(x_{\infty})$. Then, by Lemma \ref{Lemma1} and definition of $A_n$, we have that $||A_n||$ is bounded.  Note that by construction $||w_n||=||v_n||$ for all $n$, and hence $\lim _{n\rightarrow\infty}v_n=0$. It follows that  
\begin{eqnarray*}
\nabla f(x_{\infty})=\lim _{n\rightarrow\infty}\nabla f(x_n)=\lim _{n\rightarrow\infty}A_nv_n=0.  
\end{eqnarray*}

2) For simplicity, we can assume that $x_{\infty}=0$. We assume that $x_{\infty}$ is a saddle point, and will arrive at a contradiction. By 1) we have $\nabla f(0)=0$, and by the assumption we have that $\nabla ^2f(0)$ is invertible. 

We define $A(x)=\nabla ^2f(x)+\delta (x)||\nabla f(x)||^{1+\alpha}Id$, and $A=\nabla ^2f(0)=A(0)$. We look at the following (may not be continuous) dynamical system on $\mathbb{R}^m$: 
\begin{eqnarray*}
F(x)=x-w(x),
\end{eqnarray*}
 where $w(x)=pr_{A(x),+}(v(x))-pr_{A(x),-}(v(x))$ and $v(x)=A(x)^{-1}\nabla f(x)$.  
 
 Then for an initial point $x_0$, the sequence constructed by New Q-Newton's method is exactly the orbit of $x_0$ under the dynamical system $x\mapsto F(x)$. It follows from Lemma \ref{Lemma1} that $A(x)$ is $C^1$ near $x_{\infty}$, say in an open neighbourhood $U$ of $x_{\infty}$, and at every point $x\in U$ we have that $A(x)$ must be one of the $m+1$ maps $\nabla ^2f(x)-\delta _j||\nabla f(x)||^2Id$ (for $j=0,1,\ldots ,m$), and therefore $F(x)$ must be one of the corresponding $m+1$ maps $F_j(x)$. Since $f$ is assumed to be $C^3$, it follows that all of the corresponding $m+1$ maps $F_j$ are locally Lipschitz continuous. 
 
 Now we analyse the map $F(x)$ near the point $x_{\infty}=0$. Since $\nabla ^2f(0)$ is invertible, by Lemma \ref{Lemma1} again, we have that near $0$, then $A(x)=\nabla ^2f(x)+\delta _0||\nabla f(x)||^{1+\alpha}Id$.  Moreover, the maps $x\mapsto pr_{A(x),+}(A(x)^{-1}\nabla f(x))$ and $x\mapsto pr_{A(x),-}(A(x)^{-1}\nabla f(x))$ are $C^1$.  [This assertion is probably well known to experts, in particular in the field of perturbations of linear operators. Here, for completion we present a proof, following \cite{kato}, by using an integral formula for projections on eigenspaces via the theory of resolvents. Let $\lambda _1,\ldots ,\lambda _s$ be distinct solutions of the characteristic polynomials of $A$. By assumption, all $\lambda _j$ are non-zero. Let $\gamma _j\subset \mathbb{C}$ be a small circle with positive orientation enclosing $\lambda _j$ and not other $\lambda _r$. Moreover, we can assume that $\gamma _j$ does not contain $0$ on it or insider it, for all $j=1,\ldots ,s$. Since $A(x)$ converges to $A(0)$, we can assume that for all $x$ close to $0$, all roots of the characteristic polynomial of $A(x)$ are contained well inside the union $\bigcup _{j=1}^s\gamma _j$. Then by the formula (5.22) on page 39, see also  Problem 5.9, chapter 1 in \cite{kato}, we have that
\begin{eqnarray*}
P_j(x)=-\frac{1}{2\pi i}\int _{\gamma _j} (A(x)-\zeta )^{-1}d\zeta 
\end{eqnarray*} 
 is the projection on the eigenspace of $A(x)$ corresponding to the eigenvalues of $A(x)$ contained inside $\gamma _j$. Since $A(x)$ is $C^1$, it follows that $P_j(x)$ is $C^1$ in the variable $x$ for all $j=1,\ldots ,s$. Then, by the choice of the circles $\gamma _j$, we have 
\begin{eqnarray*}
pr_{A(x),+}=\sum _{j:~\lambda _j>0}-\frac{1}{2\pi i}\int _{\gamma _j} (A(x)-\zeta )^{-1}d\zeta 
\end{eqnarray*} 
 is $C^1$ in the variable $x$. Similarly,  
 \begin{eqnarray*}
pr_{A(x),-}=\sum _{j:~\lambda _j<0}-\frac{1}{2\pi i}\int _{\gamma _j} (A(x)-\zeta )^{-1}d\zeta 
\end{eqnarray*} 
is also $C^1$ in the variable $x$. Since $A(x)$ is $C^1$ in $x$ and $f(x))$ is $C^2$, the proof of the claim is completed.]

 Hence, since $x\mapsto (\nabla ^2f(x)+\delta _0||\nabla f(x)||^{1+\alpha}Id)^{-1}\nabla f(x)$ is $C^1$, it follows that the map $x\mapsto F(x)$ is $C^1$. We now compute the Jacobian of $F(x)$ at the point $0$. Since $\nabla f(0)=0$, it follows that $\nabla f(x)=\nabla ^2f(0).x+o(||x||)$, here we use the small-o notation, and hence
 \begin{eqnarray*}
 (\nabla ^2f(x)+\delta _0||\nabla f(x)||^{1+\alpha}Id)^{-1}\nabla f(x)=x+o(||x||).
 \end{eqnarray*} 
 It follows that $w(x)=pr_{A,+}(x)-pr_{A,-}(x)+o(||x||)$, which in turn implies that $F(x)=2pr_{A,-}(x)+o(||x||)$. Hence $JF(0)=2pr_{A,-}$. 

 Therefore, we obtain the existence of local Stable-central manifolds for the associated dynamical systems near saddle points of $f$ (see Theorems III.6 and III.7 in \cite{shub}). We can then, using the fact that under the assumptions that the hyperparameters $\delta _0,\ldots ,\delta _m$ are randomly chosen, to obtain:

 {\bf Claim:} The dynamical system is - outside of a set of Lebesgue measure $0$ - locally invertible, and hence the preimage of a set of Lebesgue measure $0$ again has Lebesgue measure $0$. 
 
A similar claim has been established for another dynamical systems in \cite{truong} - for a version of Backtracking GD. The idea in \cite{truong} is to show that the associated dynamical system (depending on $\nabla f$), which is locally Lipschitz continuous, has locally bounded torsion. The case at hand, where the dynamical system depends on the Hessian and also orthogonal projections on the eigenspaces of the Hessian, is more involved to deal with. 
 
We note that the fact that $\delta _0,\ldots ,\delta _m$ should be random to achieve  the truth of Claim has been overlooked in the arXiv version of this paper, and has now been corrected in a new work by the first author \cite{truongnew}, where the known results - including those in this paper - are extended to the Riemannian manifold setting. We will sketch here main ideas of how Claim can be proven, and refer the readers to \cite{truongnew} for more details.   

Putting, as  above, $A(x,\delta )=\nabla ^2f(x)+\delta ||\nabla f(x)||^{1+\alpha}Id$. Let $\mathcal{C}=\{x\in \mathbb{R}^m:~\nabla f(x)=0\}$ be the set of critical points of $f$. One first use the fact that $\det (A(x,\delta ))$ is a polynomial, and is non-zero for $x\notin \mathcal{C}$, to show that there is a set $\Delta \subset \mathbb{R}$ of Lebesgue measure $0$ so that for a given $\delta \notin \Delta$, the set $x\notin \mathcal{C}$ for which $A(x,\delta )$ is not invertible has Lebesgue measure $0$. One then shows, using that $w(x,\delta )$ (that is, the $w(x)$ as above, but now we add the parameter $\delta$ in to make clear the dependence on $\delta$), is a rational function in  $\delta$, and is non-zero (by looking to what happens when $\delta \rightarrow \infty$). This allows one to show that there is a set $\Delta '\subset \mathbb{R}\backslash \Delta$ of Lebesgue measure $0$ so that for all $\delta \notin (\Delta \cup \Delta ') $ then $A(x,\delta )$ is invertible and the set where the {\bf gradient} of the associated dynamical system $F(x)=x-w(x,\delta )$ is, locally outside $\mathcal{C}$, invertible. This proves the Claim.

From the above proof, we have an explicit criterion for $\delta _0,\ldots ,\delta _m$ to be random: they should avoid the set $\Delta \cup \Delta '$.

3) We can assume that $x_{\infty}=0$, and define $A=\nabla ^2f(0)$. The assumption that $\nabla ^2f(0)$ is invertible and 1) - as well as Lemma \ref{Lemma1} - imply that we can assume, without loss of generality, that $A_n=\nabla ^2f(x_n)+\delta _0||\nabla f(x_n)||^{1+\alpha}Id$ for all $n$, and that $\nabla ^2f(x_n)$ is invertible for all $n$.  Since $\nabla f(0)=0$ and $f$ is $C^3$, we obtain by Taylor's expansion $\nabla f(x_n)=A.x_n+O(||x_n||^2)$.   Then, by Taylor's expansion again we find that
\begin{eqnarray*}
A_n^{-1}&=&\nabla ^2f(x_n)^{-1}.(Id+\delta _0||\nabla f(x_n)||^{1+\alpha}\nabla ^2f(x_n))^{-1}\\
&=&\nabla ^2f(x_n)^{-1}(Id-\delta _0||\nabla f(x_n)||^{1+\alpha}\nabla ^2f(x_n)+(\delta _0||\nabla f(x_n)||^{1+\alpha}\nabla ^2f(x_n))^2+\ldots )\\
&=&\nabla ^2f(x_n)^{-1}+O(||x_n||^{1+\alpha})=A^{-1}+O(||x_n||).  
\end{eqnarray*}

Multiplying $A_n^{-1}$ into both sides of the equation $\nabla f(x_n)=\nabla ^2f(0).x_n+O(||x_n||^2)$, using the above approximation for $A_n^{-1}$, we find that
\begin{eqnarray*}
v_n=A_n^{-1}\nabla f(x_n)=x_n+O(||x_n||^2). 
\end{eqnarray*}

Since we assume that $x_0\notin \mathcal{A}$, it follows that $A$ is positive definite. Hence we can assume, without loss of generality, that $A_n$ is positive definite for all $n$. Then from the construction, we have that $w_n=v_n$ for all $n$. Hence, in this case, we obtain
\begin{eqnarray*}
x_{n+1}=x_n-w_n=x_n-v_n=O(||x_n||^2),
\end{eqnarray*} 
thus the rate of convergence is quadratic.  

4) The proof of part 3 shows that in general we still have
\begin{eqnarray*}
v_n=x_n+O(||x_n||^2).
\end{eqnarray*}
Therefore, by construction we have $w_n=pr_{A_n,+}(v_n)-pr_{A_n,-}(v_n)=O(||x_n||)$. Hence  $x_{n+1}=x_n-w_n=O(||x_n||)$, and thus the rate of convergence is at least linear. 
  
5) This assertion follows immediately from the proof of part 3). 

\end{proof}

\subsection{Quickly finding roots of meromorphic functions in 1 complex variable}
\label{SectionRootMeromorphicFunction}
In this subsection we discuss how our method can be used to quickly find roots of meromorphic functions in 1 complex variable. Since the main focus of our paper is on optimization in general, we will only briefly mention most relevant facts, and refer interested readers to references. 

Solving equations is an important task for both theory and applications. Solving polynomial equations $g(z)=0$ has been important in the development of mathematics and science, and there are thousands of algorithms devoted to them, see \cite{pan}.  We specially mention here two variants of Newton's method which have global convergence guarantee and  relevant to our method. One is random damping Newton's method $x_{n+1}=x_n-\delta _n[\nabla ^2g(x_n)]^{-1}\nabla g(x_n)$ (mentioned already in the review section, here $\delta _n$'s are random complex numbers), for which global convergence guarantee is established using techniques from complex dynamics \cite{sumi}. Another idea is in \cite{kalantari}, inspired by \cite{smale}, computing at each point $z$ a specific amount $\Delta z$ for which $|g(z+\Delta z)|^2<|g(z)|^2$. There are two versions proposed in \cite{kalantari}. One which has a quadratic rate of convergence, but convergence is only guaranteed locally when the initial point $z_0$ is chosen so that $|f(z_0)|$ is smaller than a quantity computed on the set of critical points of $g$. Another one has global convergence, but there is no statement on rate of convergence. The method in \cite{kalantari} can be viewed as a descent method in optimization, however it seems inflexible and  have restricted applications to polynomials in 1 variable. Compared to these two algorithms, New Q-Newton's method is more flexible and dynamic than the one in  \cite{kalantari}, while New Q-Newton's method is more deterministic than random damping Newton's method (New Q-Newton's method needs only $m+1$ hyperparameters $\delta _0,\ldots ,\delta _m$ and these are chosen from beginning). Also, New Q-Newton's method applies to  2 real variables and a system of 2 real equations (see below), while the mentioned algorithms apply to 1 complex variable and 1 complex equation.  

Coming to finding roots of a holomorphic function $g(z)=0$, there are fewer options. One most used idea seems to be that in \cite{delves-lyness},  which amounts to finding effective ways to compute integrals of the form 
\begin{eqnarray*}
\frac{1}{2\pi i}\int _{C}z^N\frac{g'(z)}{g(z)}dz,
\end{eqnarray*}
where $C$ is the boundary of a bounded domain in $\mathbb{C}$. By Cauchy's integral formula, the resulting is $\sum _{i=1}^Nz_i^N$, where $z_i$ are the roots of $g$ in the domain bounded by $C$. One can also combine this with iterative methods, for example estimating the number of roots inside the domain by this integral calculation with $N=0$ and then apply iterative methods; or finding a polynomial with the same roots in the domain as the function $g$ by calculating the integrals for N going from $1$ to the number of roots, and then apply methods for finding roots of a polynomial. A well known older method is that of Lehmer's \cite{lehmer}, which uses a special procedure to determine whether there is at least 1 root of $g$ inside a given domain, and then divide the domain to smaller domains and apply the same special procedure, to locate roots of $g$ to a given error. The idea in \cite{smale} can also be applied to holomorphic functions, but becomes more complicated.  

Computer algebra softwares, like Mathematica and Matlab, have routines to do the above tasks. While we do not know the precise algorithms used by these softwares, it is reasonable to guess that they are based on iterative methods, e.g. Newton's method.  

Optimization can be applied to solve the above questions, and more general systems of equations. Here, we explicitly describe how to use our method to find roots of meromorphic functions in 1 complex variable. This case, as far as we know, is not extensively discussed in the literature. Besides being usually fast, iterative optimization methods have the advantages of being easy to understand conceptually, flexible and easy to implement.  

Let $g$ be a meromorphic function in 1 complex variable $z\in \mathbb{C}$. Then, outside a discrete set (poles of $g$), $g$ is a usual holomorphic function. To avoid the trivial case, we can assume that $g$ is non-constant. We write $z=x+iy$, where $x,y\in \mathbb{R}$. We define $u(x,y)=$ the real part of $g$, and $v(x,y)=$ the imaginary part of $g$. Then we consider a function $f(x,y)=u(x,y)^2+v(x,y)^2$. Then a zero $z=x+iy$ of $g$ is a global minimum of $f$, at which the function value is $0$. Therefore, optimization algorithm can be used to find roots of $g$, by applying to the function $f(x,y)$, provided the  algorithm assure convergence to critical points and avoidance of saddle points, and provided that critical points of $f$ which are not zeros of $g$ must be saddle points of $f$. In the remaining of this subsection, we will address these issues. 

First of all, while New Q-Newton's method does not have convergence guarantee to critical points, a modification of it, called New Q-Newton's method Backtracking, has this property \cite{truong2021}. Roughly speaking, in defining New Q-Newton's method Backtracking, one does two changes from that of New Q-Newton's method. The first change is that instead of requiring $\det (\nabla ^2f(x_k)+\delta _j||\nabla f(x_k)||^{1+\alpha }Id)\not= 0$,  one asks for a stronger condition that all eigenvalues of $\nabla ^2f(x_k)+\delta _j||\nabla f(x_k)||^{1+\alpha }I$  has absolute value $\geq \frac{1}{2}(\inf _{i\not= i'}|\delta _i-\delta _{i'}|)||\nabla f(x_k)||^{1+\alpha}$. The second change is to add a Backtracking line search component, using that the vector $-w_k$ constructed by New Q-Newton's method is a descent direction. In the case of a Morse function, one obtains the following result, which is so far the best theoretical guarantee for iterative optimization methods in the literature, as far as we know. Interested readers are referred to \cite{truong2021} for details. 

\begin{theorem} Let $f:\mathbb{R}^m\rightarrow \mathbb{R}$ be a $C^3$ function. Let $x_0$ be an initial point and $\{x_n\}$ the sequence constructed by New Q-Newton's method Backtracking. 

1) $f(x_{n+1})\leq f(x_n)$ for all $n$. Moreover, any cluster point of $\{x_n\}$ is a critical point of $f$.

2) Assume moreover that $f$ is Morse (that is, all its critical points are non-degenerate) and $x_0$ is randomly chosen. Then we have two alternatives: 

i) $\lim _{n\rightarrow\infty}||x_n||=\infty$, 

or 

ii) $\{x_n\}$ converges to a local minimum of $f$, and the rate of convergence is quadratic. 

Moreover, if $f$ has compact sublevels, then only case ii) happens. 

\label{TheoremMorse}\end{theorem}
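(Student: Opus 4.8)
The plan is to reduce Theorem \ref{TheoremMorse} to three ingredients that are essentially already available: the descent property of the search direction $-w_k$, the classical theory of Backtracking (Armijo) line search applied to a descent direction, and the Stable--Center manifold argument used in the proof of Theorem \ref{TheoremMain}, now run for the dynamical system that also records the accepted learning rate. First I would record the linear-algebra fact that $-w_k$ is a descent direction with good quantitative constants. Splitting $v_k=v_k^++v_k^-$ along the orthogonal, $A_k$-invariant decomposition $\mathbb{R}^m=\mathcal{E}^{+}(A_k)\oplus\mathcal{E}^{-}(A_k)$, one gets
\begin{eqnarray*}
\langle w_k,\nabla f(x_k)\rangle=\langle v_k^+-v_k^-,\,A_kv_k\rangle=\langle A_kv_k^+,v_k^+\rangle-\langle A_kv_k^-,v_k^-\rangle\geq 0,
\end{eqnarray*}
which is strictly positive whenever $\nabla f(x_k)\neq 0$. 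The strengthened non-degeneracy requirement built into New Q-Newton's method Backtracking --- that every eigenvalue of $A_k$ has modulus at least $\tfrac12\kappa\|\nabla f(x_k)\|^{1+\alpha}$, with $\kappa=\inf_{i\neq i'}|\delta_i-\delta_{i'}|$ --- then gives $\langle w_k,\nabla f(x_k)\rangle\geq\tfrac12\kappa\|\nabla f(x_k)\|^{1+\alpha}\|w_k\|^2$ (recall $\|w_k\|=\|v_k\|$), while continuity of $\nabla^2f$ yields $\|w_k\|\geq\|\nabla f(x_k)\|/\|A_k\|$ and hence a lower bound for $\|w_k\|$ of the form ``constant times $\|\nabla f(x_k)\|$'' on any bounded set.

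With these bounds in hand, part 1) follows from the standard Backtracking arsenal: $f(x_{n+1})\leq f(x_n)$ is immediate from Armijo's inequality, and for the cluster-point claim I would argue by contradiction. Near a putative cluster point $x_\infty$ with $\nabla f(x_\infty)\neq 0$, on a bounded neighbourhood $\nabla f$ is Lipschitz (as $f\in C^3$) and bounded away from $0$ and $\|A_n\|$ is bounded, so by the usual lower bound for Backtracking step sizes the accepted learning rates are bounded below on the relevant indices; combined with the first paragraph, each nearby step decreases $f$ by a fixed positive amount, contradicting the convergence of the monotone sequence $\{f(x_n)\}$. This is exactly the line of reasoning of \cite{truong2021}, following the classical treatment of Armijo's rule in \cite{armijo,bertsekas}.

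For part 2), I would assume $f$ is Morse and $\|x_n\|\not\to\infty$, so $\{x_n\}$ has a cluster point $x_\infty$; by part 1) it is a critical point, and by the Morse hypothesis $\nabla^2f(x_\infty)$ is invertible, whence the critical set is discrete. Summing the Armijo inequalities shows $\gamma_n\langle w_n,\nabla f(x_n)\rangle\to 0$, from which --- together with the bounds above --- one deduces $x_{n+1}-x_n\to 0$ on indices lingering near $x_\infty$, so the cluster set near $x_\infty$ is connected and, being contained in the discrete critical set, is the single point $x_\infty$; thus $x_n\to x_\infty$ (the alternative $\|x_n\|\to\infty$ being excluded). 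To identify $x_\infty$: near a non-degenerate critical point Lemma \ref{Lemma1} forces $\delta(x)=\delta_0$, the augmentation $\delta_0\|\nabla f(x)\|^{1+\alpha}Id$ is of higher order, the line search accepts the full step, and the iteration reduces to the $C^1$ map $F$ analysed in the proof of Theorem \ref{TheoremMain}, with Jacobian at $x_\infty$ equal to $2\,pr_{\nabla^2f(x_\infty),-}$. If $x_\infty$ were a generalised saddle, this Jacobian would have the expanding eigenvalue $2$, the local Stable--Center manifold (Theorems III.6 and III.7 in \cite{shub}) would have positive codimension, and by the Claim in the proof of Theorem \ref{TheoremMain} --- which is precisely where the randomness of $\delta_0,\dots,\delta_m$ enters, guaranteeing that $F$ is locally invertible off a Lebesgue-null set --- the set of starting points whose orbit converges to a generalised saddle would be a countable union of preimages of null sets, hence null. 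So for $x_0$ outside a null set $x_\infty$ is a non-degenerate local minimum; there $A_n$ is positive definite, $w_n=v_n$, the full Newton step is taken, and the quadratic rate follows exactly as in part 3) of Theorem \ref{TheoremMain}. Finally the ``moreover'' clause is immediate: if $f$ has compact sublevels then $f(x_n)\leq f(x_0)$ traps the whole sequence in a compact set, so $\|x_n\|\not\to\infty$ and only case ii) can occur.

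The hard part will be the analysis underlying part 2): rigorously upgrading ``has a cluster point'' to ``converges'', and, more delicately, checking that the Backtracking line search stabilises near a non-degenerate critical point so that the iteration is genuinely $C^1$ there and the Stable--Center manifold machinery of Theorem \ref{TheoremMain} --- including the subtle randomness argument in the Claim, overlooked in an earlier version and corrected in \cite{truongnew} --- transfers verbatim to the line-search dynamical system. These points are treated in full in \cite{truong2021}, to which I would defer for the complete argument.
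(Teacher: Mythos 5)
You should first be aware that the paper does not prove Theorem \ref{TheoremMorse} at all: it is stated as a result imported from \cite{truong2021} (``Interested readers are referred to \cite{truong2021} for details''), and the only in-paper traces of its proof are the description of the two modifications defining New Q-Newton's method Backtracking and the inequality $\beta_k\langle w_k,\nabla f(z_k)\rangle\geq \|z_{k+1}-z_k\|\cdot\|\nabla f(z_k)\|\cdot minsp(A_k)/sp(A_k)$ invoked as a black box in the proof of Theorem \ref{TheoremMeromorphic2}. So there is no in-paper proof to compare against, and your outline is in fact consistent with every ingredient the paper does supply: the orthogonal splitting giving $\langle w_k,\nabla f(x_k)\rangle=\langle A_kv_k^+,v_k^+\rangle-\langle A_kv_k^-,v_k^-\rangle\geq minsp(A_k)\|w_k\|^2$, the role of the strengthened eigenvalue lower bound $\tfrac12\kappa\|\nabla f(x_k)\|^{1+\alpha}$, Lemma \ref{Lemma1} forcing $\delta(x)=\delta_0$ near a non-degenerate critical point, and the Stable--Center manifold Claim from the proof of Theorem \ref{TheoremMain}.

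That said, what you have written is a roadmap rather than a proof, and the two places where you defer to \cite{truong2021} are precisely the places where all the work lives. First, upgrading ``has a cluster point'' to ``converges'': your lower bound $\gamma_n\langle w_n,\nabla f(x_n)\rangle\geq c\,\gamma_n\|\nabla f(x_n)\|^{1+\alpha}\|w_n\|^2$ degenerates as $\|\nabla f(x_n)\|\to 0$, so concluding $x_{n+1}-x_n\to 0$ (needed for the connected-cluster-set argument against the discrete critical set of a Morse function) requires a separate argument controlling $\gamma_n\|w_n\|$ near the critical point; this is not automatic from summability of the Armijo decrements. Second, the assertion that ``the line search accepts the full step'' near a non-degenerate critical point is genuinely borderline with the Armijo constant $\tfrac12$ displayed in the paper: at a point $x_k$ in the positive eigenspace one finds $f(x_k)-f(x_k-w_k)=\tfrac12\langle Aw_k,w_k\rangle+o(\|x_k\|^2)$ against a required right-hand side of exactly $\tfrac12\langle Aw_k,w_k\rangle$, so acceptance of $\gamma_k=1$ (and hence both the identification of the iteration map with the $C^1$ map $F$ of Theorem \ref{TheoremMain} and the quadratic rate) needs either a strictly smaller Armijo constant or a finer expansion. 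Since you flag both issues yourself and the paper likewise outsources them to \cite{truong2021}, this is acceptable as a summary of the intended argument, but it would not stand as a self-contained proof.
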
  

We now discuss the application of this result to the function $f(x,y)$ constructed from a meromorphic function $g(z)$, as mentioned before. If $g$ is holomorphic, then $f$ is well-defined everywhere, and $f$ has compact sublevels iff it is a polynomial. In case $g$ is not holomorphic, then it has poles and hence $f(x,y)$ is not well-defined on the whole $\mathbb{R}^2$. However, near a pole of $g$, then the value of $f$ is very large, and hence if one starts from an initial point $(x_0,y_0)$ which is not a pole of $g$, then by virtue of  the descent property of New Q-Newton's method Backtracking, the sequence $\{x_n\}$ will never land on a pole of $g$ and hence is well-defined. Indeed, since in this case the function $f(x,y)$ is real analytic, combining the ideas from \cite{absil-mahony-andrews} and \cite{truong2021}, we obtain the following strengthen of  Theorem \ref{TheoremMorse}. 

\begin{theorem} Let $f(x,y)$ be the function constructed from a non-constant meromorphic function $g(z)$ as before. Assume that the constant $\alpha >0$ in the definition of New Q-Newton's method does not belong to the set $\{(n-3)/(n-1):~n=2,3,4,\ldots  \}$. (For example, we can choose $\alpha =1$.)  Let $(x_n,y_n)$ be a sequence constructed by New Q-Newton's method Backtracking from an arbitrary initial point which is not a pole of $f$. Then either $\lim _{n\rightarrow\infty}(x_n^2+y_n^2)=\infty$, or the sequence $\{(x_n,y_n)\}$ converges to a point $(x^*,y^*)$ which is a critical point of $f$. 
\label{TheoremMeromorphic2}\end{theorem}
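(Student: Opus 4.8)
The plan is to combine the general convergence result of New Q-Newton's method Backtracking for Morse functions (Theorem \ref{TheoremMorse}) with the Łojasiewicz gradient inequality machinery of \cite{absil-mahony-andrews}, adapted to the real-analytic function $f(x,y)=u(x,y)^2+v(x,y)^2$. The first observation to record is that $f$ is real analytic on the complement of the (discrete) pole set $P$ of $g$, since $u,v$ are real-analytic there; moreover near any pole $f\to +\infty$. Hence, by the descent property $f(x_{n+1})\le f(x_n)$ in part 1) of Theorem \ref{TheoremMorse}, the orbit $\{(x_n,y_n)\}$ starting from a non-pole point stays in the sublevel set $\{f\le f(x_0,y_0)\}$, which is a closed subset of $\mathbb{R}^2\setminus P$; in particular the iteration is well-defined for all $n$ and never approaches a pole.

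Next I would set up the dichotomy. If $\{(x_n,y_n)\}$ is unbounded, then (using that it avoids a neighborhood of each pole, since $f$ is large there while $f(x_n,y_n)$ is bounded) the only way to be unbounded is $x_n^2+y_n^2\to\infty$, which is the first alternative. So assume the sequence is bounded; then it has at least one cluster point $(x^*,y^*)\notin P$, and by part 1) of Theorem \ref{TheoremMorse} every cluster point is a critical point of $f$. The goal is then to upgrade "cluster point" to "limit": I would invoke the Łojasiewicz gradient inequality, which holds at $(x^*,y^*)$ because $f$ is real analytic there, together with the key estimate built into New Q-Newton's method Backtracking — namely that the displacement $w_k$ is a descent direction with $\langle w_k,\nabla f(x_k)\rangle$ and $\|w_k\|$ both comparable to appropriate powers of $\|\nabla f(x_k)\|$, with the eigenvalue lower bound $\ge \tfrac12(\inf_{i\ne i'}|\delta_i-\delta_{i'}|)\|\nabla f(x_k)\|^{1+\alpha}$ ensuring $\|w_k\|\lesssim \|\nabla f(x_k)\|^{-\alpha}\cdot\|\nabla f(x_k)\| = \|\nabla f(x_k)\|^{1-\alpha}$ after dividing by the smallest eigenvalue. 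Combining these with the Armijo-type decrease $f(x_k)-f(x_{k+1})\gtrsim \delta_k\|w_k\|\,\|\nabla f(x_k)\|$ and the Łojasiewicz inequality $\|\nabla f(x)\|\ge c|f(x)-f(x^*)|^\theta$ yields, by the standard Absil–Mahony–Andrews argument, that $\sum_k\|x_{k+1}-x_k\|<\infty$; hence the whole sequence converges to a single critical point $(x^*,y^*)$.

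The role of the hypothesis $\alpha\notin\{(n-3)/(n-1):n=2,3,\dots\}$ is exactly to make the exponent bookkeeping in this summability argument work: one needs the effective Łojasiewicz exponent $\theta$ coming from $f=u^2+v^2$ (whose possible values are constrained by the order of vanishing $n$ of $g$ at a critical point that is not a zero) to be compatible with the exponent $1+\alpha$ appearing in the augmentation term, so that the resulting geometric-type series converges rather than sitting exactly at the borderline. Concretely, at a critical point of $f$ which is not a zero of $g$, $g'$ vanishes to some order $n-1\ge 1$, forcing a Łojasiewicz exponent of the form $\theta = (n-1)/n$ or similar, and the bad values $\alpha=(n-3)/(n-1)$ are precisely those for which the inequality chain degenerates to equality of exponents; excluding them keeps the argument strict.

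The main obstacle I anticipate is the exponent analysis in the previous paragraph: one must carefully compute (or at least bound) the Łojasiewicz exponent of $f=u^2+v^2$ at critical points arising from zeros of $g'$, track how the term $\delta_k\|\nabla f(x_k)\|^{1+\alpha}Id$ perturbs the eigenvalues of the Hessian, and verify that the descent and step-size estimates interlock to give a convergent series for all $\alpha$ outside the stated exceptional set — rather than, say, only for $\alpha$ large. The rest (well-definedness away from poles, boundedness dichotomy, "cluster point implies critical point") is comparatively routine given Theorem \ref{TheoremMorse} and the cited real-analytic convergence results.
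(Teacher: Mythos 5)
Your skeleton coincides with the paper's: descent keeps the orbit in the complement $\Omega$ of the pole set, every cluster point is critical by Theorem \ref{TheoremMorse}, and convergence is upgraded via the real-analyticity of $f$ and the Absil--Mahony--Andrews machinery, which reduces everything to the strong descent inequality $f(z_k)-f(z_{k+1})\geq C\,\|z_{k+1}-z_k\|\cdot\|\nabla f(z_k)\|$ on a neighbourhood of each point of $\Omega$. But the entire substance of the proof is establishing that inequality \emph{near the critical points of $f$}, and this is exactly the step you defer to as "the main obstacle." The two mechanisms you sketch for it do not work. First, the generic eigenvalue lower bound $\tfrac12(\inf_{i\neq i'}|\delta_i-\delta_{i'}|)\|\nabla f(z_k)\|^{1+\alpha}$ built into New Q-Newton's method Backtracking only yields $minsp(A_k)/sp(A_k)\gtrsim\|\nabla f(z_k)\|^{1+\alpha}$, which tends to $0$ at a critical point; feeding this into Armijo's condition gives $f(z_k)-f(z_{k+1})\gtrsim\|z_{k+1}-z_k\|\cdot\|\nabla f(z_k)\|^{2+\alpha}$, which is strictly weaker than the required inequality and does not close the Łojasiewicz summability argument. (Relatedly, your bound $\|w_k\|\lesssim\|\nabla f(x_k)\|^{1-\alpha}$ should read $\|\nabla f(x_k)\|^{-\alpha}$, which blows up near a critical point --- another sign that the generic bound is useless there.)

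Second, your account of the excluded set $\{(n-3)/(n-1)\}$ is not what the hypothesis does. In the paper it has nothing to do with the Łojasiewicz exponent of $f$; that exponent is handled wholesale by citing \cite{absil-mahony-andrews}. What is actually needed, and what the paper computes, is a case analysis at the critical points of $f$ via Lemma \ref{LemmaMeromorphic}: at a zero of $g$ of order $N$ one expands $g=\tau z^N+\cdots$ and shows via Cauchy--Riemann that the two eigenvalues of $\nabla^2f$ are both $\asymp r^{2N-2}$ while $\|\nabla f\|^{1+\alpha}\asymp r^{(2N-1)(1+\alpha)}$ is of smaller order, so $minsp(A_k)/sp(A_k)\sim 1/(2N-1)$; at a zero of $g'$ that is not a zero of $g$ one expands $g=\gamma+\tau z^N+\cdots$ and finds that $\nabla^2f$ has one positive and one negative eigenvalue, both of size $\asymp r^{N-2}$, while the perturbation $\delta\|\nabla f\|^{1+\alpha}\asymp r^{(N-1)(1+\alpha)}$. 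The excluded values of $\alpha$ are precisely those for which these two orders of magnitude could coincide, allowing $\delta\|\nabla f\|^{1+\alpha}$ to nearly cancel the negative eigenvalue and destroy the lower bound on $minsp(A_k)/sp(A_k)$. Excluding them keeps the condition number of $A_k$ bounded, which is what makes the strong descent inequality hold with a uniform constant. Without this local expansion and eigenvalue asymptotics --- which your proposal names but does not carry out --- the proof is not complete.
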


The proof of Theorem \ref{TheoremMeromorphic2} will be given at the end of this subsection, after some preparations. 
 
We now discuss properties of critical points of $f(x,y)=u(x,y)^2+v(x,y)^2$, outside poles of the meromorphic function $g(z)=u(z)+iv(z)$, where $z=x+iy$. Recall that by Riemann-Cauchy's equation, we have
\begin{eqnarray*}
\frac{\partial u}{\partial x}&=&\frac{\partial v}{\partial y},\\
\frac{\partial u}{\partial y}&=&-\frac{\partial v}{\partial x}. 
\end{eqnarray*} 

\begin{lemma} Assumptions be as above. 

1) A point $(x^*,y^*)$ is a critical point of $f(x,y)$, iff $z^*=x^*+iy^*$ is a zero of $g(z)g'(z)$.

2) If $z^*=x^*+iy^*$ is a zero of $g$, then $(x^*,y^*)$ is an isolated global minimum of $f$. Moreover, if $z^*$ is not a root of $g'$, then $(x^*,y^*)$ is a non-degenerate critical point of $f$. 

3) If $z^*=x^*+iy^*$ is a zero of $g'$, but not a zero of $gg"$, then $(x^*,y^*)$ is a saddle point of $f$. 

\label{LemmaMeromorphic}\end{lemma}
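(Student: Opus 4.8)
The plan is to compute the gradient and Hessian of $f(x,y)=u^2+v^2$ directly, exploiting the Cauchy--Riemann equations to express everything in terms of the complex derivative $g'$. First I would record that $\nabla f = 2(u u_x + v v_x,\ u u_y + v v_y)$, and then use the Cauchy--Riemann relations $u_x=v_y$, $u_y=-v_x$ to rewrite this. Writing $g'(z)=u_x+iv_x$ (so $|g'|^2=u_x^2+v_x^2$), a short computation shows that $\tfrac12\nabla f$ corresponds, under the identification $\mathbb{R}^2\cong\mathbb{C}$, to $\overline{g'(z)}\,g(z)$ (up to conjugation/sign conventions). Hence $\nabla f(x^*,y^*)=0$ iff $g'(z^*)g(z^*)=0$, which is part 1). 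Part 2), first sentence, is immediate: $f\ge 0$ everywhere and $f(x^*,y^*)=0$ exactly when $g(z^*)=0$, and since $g$ is non-constant meromorphic its zeros are isolated, so such a point is an isolated global minimum.

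Next I would handle the non-degeneracy and saddle-point assertions by computing the Hessian $\nabla^2 f$ at a critical point. At a zero $z^*$ of $g$ with $g'(z^*)\neq 0$: since $u(x^*,y^*)=v(x^*,y^*)=0$, the second-order Taylor expansion of $f$ is governed purely by the first-order terms of $u,v$, i.e. $f(x,y)\approx (u_x\,\Delta x+u_y\,\Delta y)^2+(v_x\,\Delta x+v_y\,\Delta y)^2$, whose Hessian is $2 J^T J$ where $J$ is the Jacobian of $(u,v)$. Using Cauchy--Riemann, $\det J = u_x v_y - u_y v_x = u_x^2+v_x^2 = |g'(z^*)|^2 \neq 0$, so $J$ is invertible and $2J^TJ$ is positive definite; thus the critical point is non-degenerate (and a local min, consistent with part 2)). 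For part 3), at a zero $z^*$ of $g'$ that is not a zero of $g g''$: here $u_x=v_x=0$ (and by C--R also $u_y=v_y=0$) at $z^*$, so the Hessian of $f$ is determined by the next order. One computes $\nabla^2 f(x^*,y^*) = 2\,\mathrm{Re}\big(\overline{g(z^*)}\,H\big)$-type expression, where $H$ is the Hessian (as a symmetric real bilinear form) of the holomorphic function $g$ at $z^*$; since $g$ is holomorphic, the real Hessian of $g$ built from $g''(z^*)$ has the form $\begin{pmatrix} a & b \\ b & -a\end{pmatrix}$ (real part of $g''\cdot(\Delta z)^2$), which is traceless. Therefore $\nabla^2 f(x^*,y^*)$ is a nonzero multiple (nonzero because $g(z^*)g''(z^*)\neq 0$) of a traceless symmetric $2\times 2$ matrix, hence has one positive and one negative eigenvalue: $(x^*,y^*)$ is a saddle point.

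I would organize the computation by fixing the correspondence $\mathbb{R}^2\ni(x,y)\leftrightarrow z=x+iy$ and systematically translating $\partial_x$, $\partial_y$ into $\partial_z$, $\partial_{\bar z}$, using that $g$ holomorphic means $\partial_{\bar z}g=0$, so $\partial_x g = g'$, $\partial_y g = i g'$, $\partial_x^2 g = g''$, $\partial_x\partial_y g = i g''$, $\partial_y^2 g = -g''$. Then $f=g\bar g$ and its derivatives expand by the product rule; evaluating at a critical point where either $g=0$ or $g'=0$ kills most terms and leaves exactly the clean expressions above. This packaging makes parts 1)--3) fall out of one unified calculation rather than three separate messy ones.

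The main obstacle is purely bookkeeping: getting the factors of $2$, the signs, and the conjugations right in the Hessian computation for part 3), and making sure the "traceless" structure is visible. The conceptual point---that the real Hessian of a holomorphic function is traceless (harmonicity of real/imaginary parts), so that multiplying by the nonzero scalar $2\,\mathrm{Re}(\overline{g(z^*)}(\cdot))$ preserves the indefinite signature---is short once the expansion is set up, but one must verify the cross term does not accidentally make the matrix semidefinite, i.e. that $a,b$ are not both zero; this is exactly guaranteed by $g''(z^*)\neq 0$. The degenerate-but-not-saddle borderline cases (e.g. $z^*$ a zero of both $g'$ and $g''$) are explicitly excluded by hypothesis, so no further analysis is needed there.
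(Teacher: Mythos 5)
Your proposal is correct and follows essentially the same route as the paper: direct computation of $\nabla f$ and $\nabla^2 f$ via the Cauchy--Riemann equations, positive definiteness of the Hessian at simple zeros of $g$, and the traceless form $\left(\begin{smallmatrix} a & b \\ b & -a\end{smallmatrix}\right)$ of the Hessian at zeros of $g'$ (forced by harmonicity of $u,v$), with $a=b=0$ ruled out exactly by $g(z^*)g''(z^*)\neq 0$. The only difference is presentational: you package the computation in complex/Wirtinger notation ($\nabla f \leftrightarrow 2\overline{g'}g$, Hessian $=2J^TJ$), whereas the paper carries out the same calculation in the real components $u,v$.
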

\begin{proof} We will write $u_x$ for $\partial u/\partial x$, $u_{xy}$ for $\partial ^2u/\partial x\partial y$ and so on. 

1) By calculation, we have $\nabla f=(2uu_x+2vv_x,2uu_y+2vv_y)$. By Cauchy-Riemann's equation, a critical point $(x^*,y^*)$ of $f$ satisfies a system of equations
\begin{eqnarray*}
uu_x-vu_y&=&0,\\
uu_y+vu_x&=&0,
\end{eqnarray*}
Consider the above as a system of linear equations in variables $u_x,u_y$, we see that if $(x^*,y^*)$ is not a root of $g$, then it must be a root of $u_x,u_y$. In the latter case, by Cauchy-Riemann's equation, $(x^*,y^*)$ is also a root of $v_x,v_y$, and hence $z^*=x^*+iy^*$ is a root of $g'(z)$. 

2) Since $f\geq 0$, and $f(x^*,y^*)=0$ iff $z^*=x^*+iy^*$ is a root of $g$, such an $(x^*,y^*)$ is a global minimum of $f$. Moreover, since the zero set of $g$ is discrete, $(x^*,y^*)$ is an isolated global minimum. 

For the remaining claim, we need to show that if $z^*$ is not a root of $g'$, then $\nabla ^2f(x^*,y^*)$ is invertible. By calculation,  the Hessian of f at a general point is 2 times of: 

 \[ \left( \begin{array}{cc}
u_x^2+v_x^2+uu_{xx}+vv_{xx}&u_xu_y+v_xv_y+uu_{xy}+vv_{xy}\\
u_{x}u_y+v_xv_y+uu_{xy}+vv_{xy}&u_y^2+v_y^2+uu_{yy}+vv_{yy}\\
\end{array}\right) \]
At $(x^*,y^*)$ we have $u=v=0$, and hence by Cauchy-Riemann's equation the above matrix becomes: 
 \[ \left( \begin{array}{cc}
u_x^2+u_y^2&0\\
0&u_x^2+u_y^2\\
\end{array}\right) \]
which is positive definite if $z^*$ is not a root of $g'$, as wanted. 

3) Since here $(x^*,y^*)$ is a solution of $u_x=u_y=v_x=v_y=0$, the Hessian of $f$ at $(x^*,y^*)$ is 2 times of: 
 \[ \left( \begin{array}{cc}
uu_{xx}+vv_{xx}&uu_{xy}+vv_{xy}\\
uu_{xy}+vv_{xy}&uu_{yy}+vv_{yy}\\
\end{array}\right) \]
 Note that by Cauchy-Riemann's equation we have $u_{xx}+u_{yy}=0$ and $v_{xx}+v_{yy}=0$. Therefore, if we put $a=uu_{xx}+vv_{xx}$ and $b=uu_{xy}+vv_{xy}$, then the above matrix becomes:  
 \[ \left( \begin{array}{cc}
a&b\\
b&-a\
\end{array}\right) \]
Since the determinant is $-a^2-b^2$, we conclude that $(x^*,y^*)$ is a saddle point of $f$, except the case where $a=b=0$. In the latter case, by Cauchy-Riemann's equation we have $u_{xy}=v_{xx}$ and $v_{xy}=-u_{yy}$, and hence $(x^*,y^*)$ must be a solution to 
\begin{eqnarray*}
uu_{xx}+vv_{xx}&=&0,\\
vu_{xx}-uv_{xx}&=&0. 
\end{eqnarray*}
By Cauchy-Riemann's equation again, we find that this cannot be the case, except that $z^*$ is a root of $gg"=0$.

\end{proof} 
 
 For a generic meromorphic function $g$, we have that $g'$ and $gg"$ have no common roots. Hence, by this lemma and Theorem \ref{TheoremMorse}, we obtain
 \begin{theorem} Let $g$ be a generic meromorphic function in 1 complex variable, and let $f(x,y)$ be the function in 2 real variables constructed from $g$ as above.  Let $(x_n,y_n)$ be the sequence constructed by applying New Q-Newton's method Backtracking to $f$ from a random initial point $(x_0,y_0)$. Then either
 
 i) $\lim _{n\rightarrow\infty}(x_n^2+y_n^2)=\infty$, 
 
 or
 
 ii) $(x_n,y_n)$ converges to a point $(x_{\infty},y_{\infty})$ so that $z_{\infty}=x_{\infty}+iy_{\infty}$ is a root of $g$, and the rate of convergence is quadratic. 
 
 Moreover, if $g$ is a polynomial, then $f$ has compact sublevels, and hence only case ii) happens.

 \label{TheoremGenericMeromorphic}\end{theorem}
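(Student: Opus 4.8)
The plan is to combine the classification of the critical points of $f$ given in Lemma \ref{LemmaMeromorphic} with the convergence alternative of Theorem \ref{TheoremMeromorphic2} and the saddle point avoidance of New Q-Newton's method Backtracking recorded in Theorem \ref{TheoremMorse}. Throughout one may assume $g$ is non-constant.

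First I would verify that $f$ is a Morse function on its natural domain $\mathbb{R}^2\setminus\{\text{poles of }g\}$. A generic meromorphic $g$ has only simple zeros, so $g$ and $g'$ share no zero, and, as recalled above, generically $g'$ and $gg''$ share no zero. By Lemma \ref{LemmaMeromorphic}(1) the critical points of $f$ are exactly the points $(x^*,y^*)$ with $z^*=x^*+iy^*$ a zero of $gg'$. If $z^*$ is a zero of $g$ then it is not a zero of $g'$, and Lemma \ref{LemmaMeromorphic}(2) makes the Hessian of $f$ at $(x^*,y^*)$ positive definite, hence nonsingular; if $z^*$ is a zero of $g'$ but not of $g$ then, by genericity, it is not a zero of $gg''$, and Lemma \ref{LemmaMeromorphic}(3) makes the Hessian there of trace zero and determinant $-(a^2+b^2)<0$, again nonsingular. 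So every critical point of $f$ is non-degenerate; since $f$ is real-analytic (in particular $C^3$) on its domain, it is Morse there, with critical set split into the zeros of $g$ (non-degenerate global minima) and the zeros of $g'$ that are not zeros of $g$ (non-degenerate saddles).

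Next I would invoke Theorem \ref{TheoremMeromorphic2} (its hypothesis on $\alpha$ being satisfied): from a non-pole initial point, either $\lim_{n\to\infty}(x_n^2+y_n^2)=\infty$, or $(x_n,y_n)$ converges to a critical point $(x_\infty,y_\infty)$ of $f$. In the convergent case I would rule out that the limit is a saddle: by the descent property (Theorem \ref{TheoremMorse}(1)) the orbit stays inside $\{f\le f(x_0,y_0)\}$, which is bounded away from every pole of $g$ because $f\to\infty$ near poles, so $f$ is $C^3$ and Morse on a neighborhood of the orbit and of $(x_\infty,y_\infty)$; since $(x_0,y_0)$ is random, the (locally proved, stable--center manifold) saddle point avoidance of New Q-Newton's method Backtracking excludes the non-degenerate saddles found above. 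Hence $z_\infty=x_\infty+iy_\infty$ is a zero of $g$, a non-degenerate global minimum of $f$, and Theorem \ref{TheoremMorse}(2)(ii) gives quadratic convergence; this yields alternatives i) and ii). Finally, if $g$ is a polynomial of degree $\ge 1$ then $f(x,y)=|g(z)|^2$ is a polynomial with $f(x,y)\to\infty$ as $x^2+y^2\to\infty$, so $f$ has compact sublevels and no poles, and the ``moreover'' clause of Theorem \ref{TheoremMorse} forces alternative ii).

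The hard part will be the bookkeeping in the previous paragraph: transferring the globally-stated conclusions of Theorems \ref{TheoremMorse} and \ref{TheoremMeromorphic2} to $f$, which is defined only off the poles of $g$. I would need to check carefully that the descent property really confines the whole orbit to a closed region a positive distance from the poles, so that every smoothness, Morse and stable--manifold hypothesis used in those theorems genuinely holds along the orbit and at its limit; alternatively one can sidestep this by replacing $f$, outside a large sublevel set, with a globally $C^3$ Morse function having the same orbit and the same limit point.
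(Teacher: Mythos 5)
Your proof is correct in substance, and its core is the same as the paper's: the paper deduces Theorem \ref{TheoremGenericMeromorphic} in one step from Lemma \ref{LemmaMeromorphic} together with Theorem \ref{TheoremMorse}, exactly along the lines of your first and third paragraphs (genericity means $g'$ and $gg''$ have no common zero; note that this single condition already forces the zeros of $g$ to be simple, since a common zero of $g$ and $g'$ would be a common zero of $g'$ and $gg''$, so $f$ is Morse off the poles, with its minima exactly at the zeros of $g$ and all other critical points non-degenerate saddles, and Theorem \ref{TheoremMorse}(2) then delivers the divergence/convergence dichotomy, the exclusion of saddles as limits, and the quadratic rate in one stroke). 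The one place where you genuinely deviate is the detour through Theorem \ref{TheoremMeromorphic2} to obtain convergence of the sequence: this is redundant once Morse-ness is established, and it quietly imports the hypothesis $\alpha\notin\{(n-3)/(n-1):n=2,3,\ldots\}$, which is not part of the statement of Theorem \ref{TheoremGenericMeromorphic}; as written, your argument proves the theorem only under that extra restriction on $\alpha$. The fix is simply to drop Theorem \ref{TheoremMeromorphic2} and read both the alternative i)/ii) and the identification of the limit as a non-degenerate local minimum directly from Theorem \ref{TheoremMorse}(2), which is what the paper does; Theorem \ref{TheoremMeromorphic2} is only needed for non-generic $g$, where degenerate critical points can occur and the Morse hypothesis fails. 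Your closing concern about $f$ being defined only off the poles is legitimate, but the paper treats it at exactly the informal level you propose: by the descent property the orbit stays in the initial sublevel set, which has positive distance to the poles (and in the polynomial case there are no poles and $f$ has compact sublevels), so $f$ is $C^3$ and Morse on a neighbourhood of the orbit and Theorem \ref{TheoremMorse} applies.
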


 If $h$ is a non-constant meromorphic function, then $g=h/h'$ has only simple zeros (which are either zeros or poles of $h$). Hence, they will be non-degenerate global minima of $f$. If $h$ is a polynomial, then $g=h/h'$ has compact sublevels.  
 
 Now we are ready to prove Theorem \ref{TheoremMeromorphic2}. 
 \begin{proof}[Proof of Theorem \ref{TheoremMeromorphic2}]
 
 Let $\Omega$ be the complement of the set of poles of $f$. Then as mentioned, $f$ is real analytic on $\Omega$. Let $z_n=(x_n,y_n)$ be a sequence constructed by New Q-Newton's method Backtracking in \cite{truong2021}. Then, as commented above, if the initial point is in $\Omega$, then the whole sequence stays in $\Omega$. 
 
We know by \cite{truong2021} that any cluster point of $\{z_n\}$ is a critical point of $f$. Hence, it remains to show that $\{z_n\}$ converges. To this end, by the arguments in \cite{absil-mahony-andrews}, it suffices to show that for every point $(x^*,y^*)\in \Omega$, if the point $z_k=(x_k,y_k)$ is in a small open neighbourhood of $(x^*,y^*)$ then there is a constant $C>0$ (depending on that neighbourhood) so that 
\begin{equation}\label{EquationMeromorphic2}
f(z_k)-f(z_{k+1})\geq C||z_{k+1}-z_k||\times ||\nabla f(z_k)||. 
\end{equation}

Let us recall that  if $w_k$ is the one constructed in New Q-Newton's method, then $z_{k+1}=z_k-\beta _kw_k$, where $\delta _k$ is chosen from the Backtracking line search so that Armijo's condition 
\begin{eqnarray*}
f(z_k)-f(z_{k+1})\geq \frac{1}{2}\beta _k<w_k,\nabla f(z_k)>. 
\end{eqnarray*}
     
For a 2x2 invertible matrix $A$, we define $sp(A)=\max \{|\lambda |:~$ $\lambda$ is an eigenvalue of $A\}$, and  $minsp(A)=\min \{|\lambda |:~$ $\lambda$ is an eigenvalue of $A\}$. Then by the arguments in \cite{truong2021}, we find that 
\begin{eqnarray*}
\delta _k<w_k,\nabla f(z_k)>&\geq& \beta _k||w_k||\times ||\nabla f(z_k)||\times minsp(A_k)/sp(A_k)\\
&=&||z_k-z_{k+1}||\times ||\nabla f(z_k)||\times minsp(A_k)/sp(A_k),
\end{eqnarray*}
where $A_k=\nabla ^2f(z_k)+\delta ||\nabla f(z_k)||^{1+\alpha}Id$ is constructed by New Q-Newton's method. Here, recall that $\delta $ belongs to a finite set $\{\delta _0,\ldots ,\delta _m\}$.  Hence, to show that (\ref{EquationMeromorphic2}) is satisfied, it suffices to show that every point $(x^*,y^*)\in \Omega$ has an open neighbourhood $U$ so that if $z_k\in U$ then $minsp(A_k)/sp(A_k)\geq C$ for some constant $C>0$ depending only on $U$. 

If $(x^*,y^*)$ is not a critical point of $f$, then by the construction of New Q-Newton's method Backtracking, $minsp(A_k)\geq ||\nabla f(z_k)||^{1+\alpha }$ is bounded away from 0 in a small neighbourhood $U$ of $(x^*,y^*)$, while $sp(A_k)$ is bounded from above in the same neighbourhood. Hence $minsp(A_k)/sp(A_k)$ is bounded away from 0 in $U$ as wanted. 

Hence, we need to check the wanted property only at the critical points of $f$. We saw in Lemma \ref{LemmaMeromorphic} that $(x^*,y^*)$ is a critical point of $f$ iff $z^*=x^*+iy^*$ is a root of $gg'$. Hence, we will consider two seperate cases. To simplify the arguments, we can assume that $z^*=0$ is the concerned root of $gg'$.

{\bf Case 1:} $z^*=0$ is a zero of $g$. 

We expand in a small neighbourhood of $0$: $g(z)=\tau z^N+h.o.t$, where $N \not= 0$ and $p\geq 1$ is the multiplicity of $0$. We first claim that when $z$ is close to $z^*$, then the two eigenvalues of $\nabla ^2f(z)$ are $\lambda _1(z)\sim (2N^2-N)|\tau |^2r^{2N-2} $ and $\lambda _2(z)\sim N|\tau |^2r^{2N-2}$, where $r=||z||$. For simplicity, we can assume that $\tau =1$. 

Write $z=re^{i\theta }$. We have, by definition $u+iv=z^N$, $u_x+iv_x=\frac{d}{dx}(x+iy)^{n}$ and so on. Hence, 
\begin{eqnarray*}
u&=&r^n\cos (n\theta )+h.o.t.,\\
v&=&r^n\sin (n\theta )+h.o.t.,\\
u_x&=&nr^{n-1}\cos ((n-1)\theta ),\\
v_x&=&nr^{n-1}\sin ((n-1)\theta ),\\
u_y&=&-v_x=-nr^{n-1}\sin ((n-1)\theta ),\\
v_y&=&u_x=nr^{n-1}\cos ((n-1)\theta ),\\
u_{xx}&=&n(n-1)r^{n-2}\cos ((n-2)\theta ),\\
v_{xx}&=&n(n-1)r^{n-2}\sin ((n-2)\theta ),\\
u_{yy}&=&-u_{xx}=-n(n-1)r^{n-2}\cos ((n-2)\theta ),\\
v_{yy}&=&-v_{xx}=-n(n-1)r^{n-2}\sin ((n-2)\theta ),\\
u_{xy}&=&v_{yy}=-n(n-1)r^{n-2}\sin ((n-2)\theta ),\\
v_{xy}&=&u_{xx}=n(n-1)r^{n-2}\cos ((n-2)\theta ).
\end{eqnarray*}
   
We recall that the Hessian $\nabla ^2f(x,y)$ is:  
 \[ \left( \begin{array}{cc}
u_x^2+v_x^2+uu_{xx}+vv_{xx}&u_xu_y+v_xv_y+uu_{xy}+vv_{xy}\\
u_{x}u_y+v_xv_y+uu_{xy}+vv_{xy}&u_y^2+v_y^2+uu_{yy}+vv_{yy}\\
\end{array}\right) \],
which by Cauchy-Riemann's equation becomes: 
\[ \left( \begin{array}{cc}
u_x^2+v_x^2+uu_{xx}+vv_{xx}&uu_{xy}+vv_{xy}\\
uu_{xy}+vv_{xy}&u_y^2+v_y^2+uu_{yy}+vv_{yy}\\
\end{array}\right) \],

The two concerned eigenvalues are the two roots of the characteristic polynomial of $A=\nabla ^2f(x,y)$, which is $t^2-tr (A)t+\det(A)$.  By Cauchy-Riemann's equation again, we have
\begin{eqnarray*}
tr(A)&=&u_x^2+v_x^2+u_y^2+v_y^2=2N^2r^{2N-2}+h.o.t.,\\
\det (A)&=&(u_x^2+v_x^2)(u_y^2+v_y^2)-(uu_{xx}+vv_{xx})^2-(uu_{xy}+vv_{xy})^2\\
&=&(u_x^2+v_x^2)(u_y^2+v_y^2)-(u^2+v^2)(u_{xx}^2+v_{xx}^2)\\
&=&N^4r^{4n-4}-N^2(N-1)^2r^{4N-4}=N^2(2N+1)r^{4N-4}+h.o.t.
\end{eqnarray*}
From this, it is easy to arrive at the claimed asymptotic values for the two eigenvalues of $\nabla ^2f(x,y)$:  $\lambda _1(z)\sim (2N^2-N)|\tau |^2r^{2N-2} $ and $\lambda _2(z)\sim N|\tau |^2r^{2N-2}$, where $r=||z||$. 

Now we complete the proof that (\ref{EquationMeromorphic2}) is satisfied in this case where $z^*=0$ is a root of $g(z)$. We need to estimate $minsp(A_k)/sp(A_k)$ when $z_k=(x_k,y_k)$ is close to $z^*$. We note that $A_k=\nabla ^2f(z_k)+\delta ||\nabla f(z_k)||^{1+\alpha }$. Hence, the two eigenvalues of $A_k$ are $\lambda _1(z_k)+\delta ||\nabla f(z_k)||^{1+\alpha }$ and $\lambda _2(z_k)+\delta ||\nabla f(z_k)||^{1+\alpha }$. Note that
\begin{eqnarray*}
||\nabla f(z_k)||^{1+\alpha}&=&[(uu_x+vv_x)^2+(uu_y+vv_y)^2]^{(1+\alpha )/2}\\
&=&N^{1+\alpha}r^{(2N-1)(1+\alpha)}+h.o.t.,
\end{eqnarray*}
 which is of smaller size compared to $\lambda _1(z_k)$ and $\lambda _2(z_k)$. Therefore, we have $minsp(A_k)/sp(A_k)\sim 1/(2N-1)$ for $z_k$ near $z^*$, which is bounded away from 0 as wanted. 
 
{\bf Case 2:}  $z^*=0$ is a root of $g'(z)$. 

If $z^*$ is also a root of $g(z)$, then we are reduced to Case 1. Hence, we can assume that $z^*$ is not a root of $g(z)$. Therefore, we can expand, in a small open neighbourhood of $z^*=0$: $g(z)=\gamma +\tau z^N+h.o.t.$, where $\gamma, \tau \not=0$. 

If $N=1$, then $z^*$ is not a root of $gg"$. Then by Lemma \ref{LemmaMeromorphic}, we obtain that $z^*$ is a saddle point of $f$. Hence, for $z_k$ near $z^*$ we obtain 
\begin{eqnarray*}
minsp(A_k)/sp(A_k)\sim minsp(\nabla ^2f(z^*))/sp(\nabla ^2f(z^*)),
\end{eqnarray*}
 which is bounded away from 0, as wanted. Thus, we can assume that $N\geq 2$. 
 
 Calculating as above we found: 
 \begin{eqnarray*}
 tr(\nabla ^2f(z))&=&2|\tau |^2N^2r^{2N-2},\\
 \det (\nabla ^2f(z))&=&|\tau |^4N^4r^{4N-4}-|\gamma |^2|\tau |^2N^2(N-1)^2r^{2N-4}. 
 \end{eqnarray*}
 Since $N\geq 2$, we have $|\det (\nabla ^2f(z))| >> | tr(\nabla ^2f(z))|^2$ near $z^*$. This means that the two eigenvalues $\lambda _1(z)$ and $\lambda _2(z)$ of $\nabla ^2f(z)$ are of the same size $\sim \sqrt{|\det (\nabla ^2f(z))|}/2$, which is about $|\gamma \tau | N(N-1) r^{n-2}/2$. 
 
 Now, the term $||\nabla f(z)||^{1+\alpha }$, which is about the size of $|\gamma |^{1+\alpha }|\tau |^{1+\alpha}N^{1+\alpha}r^{(N-1)(1+\alpha )}$, is of different size compared to $\lambda _1(z)$ and $\lambda _2(z)$, thanks to the condition that $\alpha$ does not belong to the set $\{(n-3)/(n-1):~n=2,3,\ldots \}$. Therefore, we obtain that $minsp(A_k)/sp(A_k)\sim 1$ near $z^*$ in this case.
 
 This completes the proof of the theorem.

 \end{proof}

\section{Implementation details, Some experimental results and Conclusions}

\subsection{Implementation details}
\label{SectionImplementation}
In this Subsection, we present some practical points concerning implementation details, for the language Python. Source code is in the GitHub link \cite{phuongGitHub}. 

Indeed, Python has already enough commands to implement New Q-Newton's method. There is a package, named numdifftools, which allows one to compute approximately the gradient and Hessian of a function. This package is also very convenient when working with a family $f(x,t)$ of functions, where $t$ is a parameter. Another package, named scipy.linalg, allows one to find (approximately) eigenvalues and the corresponding eigenvectors of a square matrix. More precisely, given a square matrix $A$, the command $eig(A)$ will give pairs $(\lambda ,v _{\lambda})$ where $\lambda $ is an approximate eigenvalue of $A$ and $v$ a corresponding eigenvector. 

One point to notice is that even if $A$ is a symmetric matrix with real coefficients, the eigenvalues computed by the command $eig$ could be complex numbers, and not real numbers, due to the fact that these are approximately computed. This can be easily resolved by taking the real part of $\lambda$, which is given in Python codes by $\lambda .real$. We can do similarly for the eigenvectors. A very convenient feature of the command $eig$ is that it already computes (approximate) orthonormal bases for the eigenspaces.    

Now we present the coding detail of the main part of New Q-Newton's method: Given a symmetric invertible matrix $A$ with real coefficients (in our case $A=\nabla ^2f(x_n)+\delta _j||\nabla f(x_n)||^{1+\alpha}$), and a vector $v$, compute $w$ which is the reflection of $A^{-1}.v$ along the direct sum of eigenspace of negative eigenvectors of $A$. First, we use the command $eig$ to get pairs $\{(\lambda _j, v_j)\}_{j=1,\ldots ,m}$. Use the command real to get real parts. If we write $v=\sum _{j=1}^m a_jv_j$, then $a_j=<v_j,v>$ (the inner product), which is computed by the Python command $np.dot(v_j,v)$. Then $v_{inv}=A^{-1}v=\sum _{j=1}^m(a_j/\lambda _j)v_j$. Finally, 
\begin{eqnarray*}
w=v_{inv}-2\sum _{j:~\lambda _j<0}(a_j/\lambda _j)v_j.
\end{eqnarray*}

\begin{remark}

1) We do not need to compute exactly the gradient and the Hessian of the cost function $f$, only approximately. Indeed, the proof of Theorem \ref{TheoremMain} shows that if one wants to stop when $||\nabla f(x_n)||$ and $||x_n-x_{\infty}||$ is smaller than a threshold $\epsilon$, then it suffices to compute the gradient and the Hessian up to an accuracy of order $\epsilon$. 

Similarly, we do not need to compute the eigenvalues and eigenvectors of the Hessian exactly, but only up to an accuracy of order $\epsilon$, where $\epsilon$ is the threshold to stop. 

In many experiments,  we only calculate the Hessian inexactly using the numdifftools package in Python, and still obtain good performance. 

2) While theoretical guarantees are proven only when the hyperparameters $\delta _0,\ldots ,\delta _m$ are randomly chosen and fixed from the  beginning, in experiments we have also tried to choose - at each iterate $n$ - choose randomly a $\delta$. We find that this variant, which will be named {\bf Random New Q-Newton's method}, has {a similar or better performance} as the original version. 

3) Note that similar commands are also available on PyTorch and TensorFlow, two popular libraries for implementing Deep Neural Networks.

\end{remark}

\subsection{Some experimental results} Here we present a couple of illustrating experimental results. Additional experiments, which are quite extensive, will be presented in the appendix to the paper. We use the python package numdifftools \cite{num} to compute gradients and Hessian, since symbolic computation is not quite efficient. Most of the experiments are run on a small personal laptop, except the cases $N_n=500$ and $N_n=1000$ in Table \ref{tab:Griewank2} where we have to run on a stronger computer (Processor: Intel Core i9-9900X CPU @ 3.50GHzx20, Memory: 125.5 GiB). The unit for running time is seconds.

Here, we will compare the performance of New Q-Newton's method against the usual Newton's method, BFGS \cite{Newton2} and Section 2.2 in \cite{bertsekas}, Adaptive Cubic Regularization \cite{nesterov-polyak, cartis-etal}, as well as Random damping Newton's method \cite{sumi}  and Inertial Newton's method \cite{bolte-etal}.
 
In the experiments, we will use the Generalised New Q-Newton's method in Section \ref{Subsection7}, since it uses smaller quantities in general. We remark that if we use the basic version of New Q-Newton's method in Table \ref{table:alg} then we obtain similar results.  We choose $\alpha =1$ in the definition. Moreover, we will choose $\Delta =\{0,\pm 1\}$, even though for theoretical proofs we need $\Delta$ to have at least $m+1$ elements, where $m=$ the number of variables. The justification is that when running New Q-Newton's method it almost never happens the case that both $\nabla ^2f(x)$ and $\nabla ^2f(x)\pm ||\nabla f(x)||^2Id$ are not invertible. The experiments are coded in Python and run on a usual personal computer. For BFGS: we use the function scipy.optimize.fmin$\_$bfgs available in Python, and put  $gtol=1e-10$ and $maxiter=1e+6$. For Adaptive cubic regularization for Newton's method, we use the AdaptiveCubicReg module in the implementation in \cite{ARCGitHub}. We use the default hyperparameters as recommended there, and use "exact" for the hessian$\_$update$\_$method.  For hyperparameters in Inertial Newton's method, we choose $\alpha =0.5$ and $\beta =0.1$ as recommended by the authors of \cite{bolte-etal}. Source codes for the current paper are available at the GitHub link \cite{phuongGitHub}.

We will also compare the performance to Unbounded Two-way Backtracking GD \cite{truong-nguyen1}. The hyperparameters for Backtracking GD are fixed through all experiments as follows: $\delta _0=1$, $\alpha =0.5$ and $\beta =0.7$. Recall that this means we have the following in Armijo's condition: $f(x-\beta ^m\delta _0x)-f(x)\leq -\alpha \beta ^m\delta _0||\nabla f(x)||^2$, where $m\in \mathbb{Z}_{\geq 0}$ depends on $x$. Here we recall the essence of Unbounded and Two-way variants of Backtracking GD, see \cite{truong-nguyen1} for more detail. In the Two-way version, one starts the search for learning rate $\delta _n$ - at the step n- not at $\delta _0$ but at $\delta _{n-1}$, and allows the possibility of increasing $\delta \mapsto \delta /\beta $, and not just decreasing $\delta \mapsto \delta \beta$ as in the standard version of Backtracking GD. In the Unbounded variant, one allows the upper bound for $\delta _n$ not as $\delta _0$ but as $\max\{\delta _0,\delta _0||\nabla f(x_n)||^{-\kappa}\}$ for some constant $0<\kappa <1$. In all the experiments here, we fix $\kappa =1/2$. The Two-way version helps to reduce the need to do function evaluations in checking Armijo's condition, while the Unbounded version helps to make large step sizes near degenerate critical points and hence also helps with quicker convergence. 

{\bf Legends:} We use the following abbreviations:  "ACR" for Adaptive cubic regularisation, "BFGS" for itself, "Rand" for Random damping Newton method, "Newton" for Newton's method, "Iner" for Inertial Newton's method, "NewQ" for New Q-Newton's method, "R-NewQ" for Random New Q-Newton's method, and "Back" for Unbounded Two-way Backtracking GD.  

{\bf Features reported:} We will report on the number of iterations needed, the function value and the norm of the gradient at the last point, as well as the time needed to run. 

\subsubsection{A toy model for protein folding}
 This problem is taken from \cite{shh}. Here is a brief description of the problem. The model has only two amino acids, called A and B, among 20 that occurs naturally. A molecule with n amino acids will be called an n-mer. The amino acids will be linked together and determined by the angles of bend $\theta _2,\ldots ,\theta _{n-1}\in [0,2\pi ]$. We specify the amino acids by boolean variables $\xi _1,\ldots ,\xi _n\in \{1,-1\}$, depending on whether the corresponding one is A or B. The intramolecular potential energy is given by: 
\begin{eqnarray*}
\Phi =\sum _{i=2}^{n-1}V_1(\theta _i)+\sum _{i=1}^{n-2} \sum_{j=i+2}^n V_2(r_{i,j},\xi _i,\xi _j).
\end{eqnarray*}
Here $V_1$ is the backbone bend potential and $V_2$ is the nonbonded interaction, given by:  

\begin{eqnarray*}
V_1(\theta _i)&=&\frac{1}{4}(1-\cos (\theta _i)),\\
r_{i,j}^2&=&[\sum _{k=i+1}^{j-1}\cos (\sum _{l=i+1}^k\theta _l)]^2+[\sum _{k=i+1}^{j-1}\sin (\sum _{l=i+1}^k\theta _l)]^2,\\
C(\xi _i,\xi _j)&=&\frac{1}{8}(1+\xi _i+\xi _j+5\xi _i\xi _j),\\
V_2(r_{i,j},\xi _i,\xi _j)&=&4(r_{i,j}^{-12}-C(\xi _i,\xi _j)r_{i,j}^{-6}). 
\end{eqnarray*}
Note that the value of $C(\xi _i,\xi _j)$ belongs to the finite set $\{1,0.5,-0.5\}$. 

In the first nontrivial dimension $n=3$, we have $\Phi =V_1(\theta _2)+V_2(r_{1,3},\xi _1,\xi _3)$ and 
 $r_{1,3}=1$. Hence 
\begin{eqnarray*}
\Phi =\frac{1}{4}(1-\cos (\theta _2))+4(1-C(\xi _1,\xi _3)).
\end{eqnarray*}
Therefore, the global minimum (ground state) of $\Phi$ is obtained when $\cos (\theta _2)=1$, at which the value of $\Phi$ is $4(1-C(\xi _1,\xi _3))$. In the special case where $\xi _1=1=\xi _3$ (corresponding to  AXA), the global minimum of $\Phi$ is $0$. This is different from the assertion in Table 1 in \cite{shh}, where the ground state of $\Phi$ has value $-0.65821$ at $\theta _2=0.61866$. Our computations for other small dimensions cases $n=4,5$ also obtain values different from that reported in Table 1 in \cite{shh}. In \cite{shh} results are reported for dimension $\leq 5$, while those for dimensions 6 and 7 are available upon request. 

Table \ref{tab:ProteinFolding} presents the optimal values for the potential-energy function $\Phi$ for  molecules n-mer, where $n\leq 5$, founded by running different optimization methods from many random initial points. The cases listed here are the same as those in Table 1 in \cite{shh}. For comparison, we also compute the function value at the points listed in Table 1 in \cite{shh}. 

\begin{table}[htp]
\fontsize{11}{11}\selectfont
  \centering
  \begin{tabular}{|l|c|c|c|c|c|}
  \hline
 Molecule&$min ~\Phi$&$\theta _2/\pi$	& $\theta _3/\pi$  & $\theta _4/\pi$&Comparison with the point $\theta ^*=(\theta _2^*,\theta _3^*,\theta _4^*)$ in \cite{shh}  \\
\hline
AAA &0&0& &&$(0.6186,.,.)\pi $, $\Phi (\theta ^*)=0.3410$  \\
\hline
AAB &6&0& &&$(0,.,.)\pi $, $\Phi (\theta ^*)=6$  \\
\hline
ABA &0&0& &&$(0.6186,.,.)\pi $, $\Phi (\theta ^*)=0.3410$  \\
\hline
ABB &6&0& &&$(0,.,.)\pi $, $\Phi (\theta ^*)=6$  \\
\hline
BAB &2&0& &&$(0,.,.)\pi $, $\Phi (\theta ^*)=2$  \\
\hline
BBB &2&0& &&$(0,.,.)\pi $, $\Phi (\theta ^*)=2$  \\
\hline
AAAA &-0.0615&0&0 &&$(0.6183, 0.3392,.)\pi$, $\Phi (\theta ^*)=0.3226$  \\
\hline
AAAB &6.0322&0& 0&& $(0.6175, -0.0513,.)\pi$, $\Phi (\theta ^*)=6.3763$ \\
\hline
AABA &5.3417&0&0.6186 &&$(0.3327, 0.6218,.)\pi$, $\Phi (\theta ^*)=5.4681$   \\
\hline
AABB &12.0322&0&0 && $(0, 0,.)\pi$, $\Phi (\theta ^*)=12.0322$ \\
\hline
ABAB &2.0322&0&0 && $(0.6176, -0.06667,.)\pi$, $\Phi (\theta ^*)=2.3790$ \\
\hline
ABBA &11.3417&0&-0.6186 && $(0.4769, 0.4769,.)\pi$, $\Phi (\theta ^*)=12.0995$ \\
\hline
ABBB &8.0322&0&0&& $(0, 0,.)\pi$, $\Phi (\theta ^*)=8.0322$ \\
\hline
BAAB &11.9697&0&0&& $(0, 0,.)\pi$, $\Phi (\theta ^*)=11.9697$ \\
\hline
BABB &7.9697&0&0&& $(0, 0,.)\pi$, $\Phi (\theta ^*)=7.9697$  \\
\hline
BBBB &3.9697&0&0&& $(0.5582, 0.3518,.)\pi$, $\Phi (\theta ^*)=4.3577$  \\
\hline
AAAAA &-1.6763&0&0.6183&0.3392& $(0.3359, 0.6202, 0.0454)\pi$, $\Phi (\theta ^*)=-0.7042$  \\
\hline
AAAAB &5.4147&0&0.6176& -0.0513& $(0.6189, 0.3374, -0.0689)\pi$, $\Phi (\theta ^*)=6.3677$\\
\hline
AAABA &4.5490&0&0.3326&0.6218& $(0.2972, 0.3330, 0.6217)\pi$, $\Phi (\theta ^*)=4.6503$ \\
\hline
AAABB &12.0672&0&0&0&$(0.6175, -0.0537, -0.0016)\pi$, $\Phi (\theta ^*)=12.4117$ \\
\hline
AABAA &10.3236&0&0.6183&0.3392&$(0.3294, 0.6235, 0.0455)\pi$, $\Phi (\theta ^*)=11.2914$ \\
\hline
AABAB &7.4147&0&0.6176&-0.0513& $(0.3326, 0.6213, -0.5457)\pi$, $\Phi (\theta ^*)=8.3433$\\
\hline
AABBA &16.5490&0&0.3326&0.6218& $(0.1672, 0.4822, 0.4732)\pi$, $\Phi (\theta ^*)=17.4098$\\
\hline
AABBB &14.0672&0&0&0& $(0, 0, 0)\pi$, $\Phi (\theta ^*)=14.067$\\
\hline
ABAAB &11.3506&0&-0.6176&1.2066& $(0.6222, 0.3311, -0.0630)\pi$, $\Phi (\theta ^*)=12.3050$\\
\hline
ABABA &2.0589&0&0&0&$(0.6190, 0.04739, 0.6190)\pi$, $\Phi (\theta ^*)=4.5373$ \\
\hline
ABABB &8.0047&0&0&0& $(0.6176, -0.0710, -0.0022)\pi$, $\Phi (\theta ^*)=8.3525$\\
\hline
ABBAB &13.3506&0&0.6176&-0.0667&$(0.4788, 0.4734, -0.1418)\pi$, $\Phi (\theta ^*)=14.1068$\\
\hline
ABBBA &13.9638&0&-0.4768&-0.4768&$(0.2457, 0.5555, 0.2457)\pi$, $\Phi (\theta ^*)=14.8761$\\
\hline
ABBBB &10.0047&0&0&0&$(0.0548,  -0.3423, -0.5617)\pi$, $\Phi (\theta ^*)=10.9039$\\
\hline
BAAAB &12.0617&0&0&0&$(0.0392,  -0.6167, 0.0392)\pi$, $\Phi (\theta ^*)=14.1842$\\
\hline
BAABB &17.9992&0&0&0&$(0,  0, 0)\pi$, $\Phi (\theta ^*)=17.9992$\\
\hline
BABAB &4.0617&0&0&0&$(0.0532,  -0.6168, 0.0532)\pi$, $\Phi (\theta ^*)=6.1938$\\
\hline
BABBB &9.9992&0&0&0&$(0.5692, 0.3357, 0.2665)\pi$, $\Phi (\theta ^*)=10.4814$\\
\hline
BBABB &13.8602&0&-0.5582&-0.3518&$(0.3177, 0.5764, 0.0973)\pi$, $\Phi (\theta ^*)=14.1087$\\
\hline
BBBBB &5.8602&0&-0.5582&-0.3518&$(0.3434, 0.5650, 0.0931)\pi$, $\Phi (\theta ^*)=6.1185$\\
\hline

  \end{tabular}
   \caption{Optimal values for the potential-energy function $\Phi$ for $n$-mers, where $n=3, 4, 5$.}%
  \label{tab:ProteinFolding}
\end{table}

Here we will perform experiments for two cases: ABBBA (dimension 5) and ABBBABABAB (dimension 10). The other cases (of dimensions 5 and 10) yield similar results. We will generate random initial points and report on the performance of the different algorithms.  We observe that the performance of Inertial Newton's method and Adaptive Cubic Regularization are less stable or more slow than the other methods. 

1) {\bf For ABBBA:}  In this case the performance of New Q-Newton's method and of Random New Q-Newton's method are very similar, so we report only that of New Q-Newton's method. We found that the optimal value seems to be about $13.963$. 

We will test for several (random) choices of initial points:  $$(\theta _2,\theta _3,\theta _4)=(-0.0534927, 1.61912758, 2.9567358),$$ with function value $2555432869.1351156$; 

$$(\theta _2,\theta _3,\theta _4)=(1.80953527, -1.74233202,  2.45974152),$$
with function value 538.020;

and 

$$(\theta _2,\theta _3,\theta _4)=(1.07689387, 2.97081771, 0.800213082), $$ 
with function value 6596446021.145492.

Table \ref{tab:ProteinFolding5} lists the performance of different methods (with a maximum number of 5000 iterates, but can stop earlier if $||\nabla f(z_n)||<1e-10$ or $||z_{n+1}-z_n||<1e-20$ or there is an unknown error): 

\begin{table}[htp]
\fontsize{11}{11}\selectfont
  \centering
  \begin{tabular}{|l|c|c|c|c|c|c|c|}
  \hline
 &ACR&BFGS	& Newton   & NewQ  & Rand& Iner& Back\\
\hline
&\multicolumn{7}{c|}{Initial point (-0.0534927, 1.61912758, 2.9567358)}\\
\hline
Iterations &7&57& 17 & 31 &31& 14&269 \\
\hline
$f$ &5e+6&14.058&3e+5 & 13.963 &3e+5 &  14.255&13.963 \\
\hline
$||\nabla f||$ &1e+8&1e-8& 6e-6 & 5e-12 &6e-6 & 0&7e-7 \\
\hline
Time &0.058&0.843& 0.337 & 0.617 &0.594 & 0.078&6.144 \\
\hline
&\multicolumn{7}{c|}{Initial point (1.80953527, -1.74233202,  2.45974152)}\\
\hline
Iterations &5&26& 27 & 15 &51& 13&24 \\
\hline
$f$ &14.117&13.963&13.963 & 13.963 &14.463 &  5e+4&14.058 \\
\hline
$||\nabla f||$ &47.388&6e-11& 4e-12 & 8e-12 &4e-10 & 0&1e-8 \\
\hline
Time &0.114&0.1773& 0.541 & 0.317 &1.033 & 0.084&0.628 \\
\hline
&\multicolumn{7}{c|}{Initial point (1.07689387, 2.97081771, 0.800213082)}\\
\hline
Iterations &19&57& 32 & 48 &32& 15&38 \\
\hline
$f$ &283.822&13.963&13.963 & 13.963 &13.963 &  39.726&14.058 \\
\hline
$||\nabla f||$ &3950.996&1e-10& 1e-11 & 5e-10 &4e-10 & 0&8e-10 \\
\hline
Time &2.760&0.398& 0.626 & 0.642 &0.928 & 0.085&0.938 \\
\hline

  \end{tabular}
   \caption{Performance of different optimization methods for the toy protein folding problem for the 5-mer ABBBA at some  random initial points. The function values at the initial points are respectively 2555432869.1351156; 538.020; and 6596446021.145492.}%
  \label{tab:ProteinFolding5}
\end{table}

2) {\bf For ABBBABABAB:} In this case, usually Newton's method and Random damping Newton's method encouter the error "Singular matrix". Hence, we have to take a more special care of them and reduce the number of iterations for them to 50. In this case, Random New Q-Newton's method can obtain better performances than New Q-Newton's methods, so we report both of them. In this case, it seems that the optimal value is about $19.387061837218972$, which is obtained near the point 
\begin{eqnarray*}
&&(\theta _2,\theta _3,\theta _4, \theta _5,\theta _6,\theta _7, \theta _8)\\
&=&(-4.7735907, -0.47766515,   -1.02890588,  -1.77319053,\\
&&-0.02340005, 0.08208585, -1.39102817,  0.27906532).  
\end{eqnarray*}
{\bf Remark.} We have tested with many random initial points, and found that none of the algorithms here (Adaptive Cubic Regularization, BFGS, Newton's method, New Q-Newton's method, Random Newton's method, Random New Q-Newton's method, Inertial Newton's method, and Backtracking GD) can find the above global minimum. The value has been found by running New Q-Newton's method Backtracking \cite{truong2021} with for example Point 1 below, with running time about 16.2 seconds.

We will test with 4 random initial points (see Table \ref{tab:ProteinFolding10}): 

Point 1 

\begin{eqnarray*}
&&(\theta _2,\theta _3,\theta _4, \theta _5,\theta _6,\theta _7, \theta _8)\\
&=&(-3.00156524, -1.5427558,   1.9394472,  -2.74672374,\\
&&-1.82664375, 1.96928115, -1.26350718,  2.82317321).  
\end{eqnarray*}
The function value at the initial point is $4185029.6878152043$. 

Point 2: 

\begin{eqnarray*}
&&(\theta _2,\theta _3,\theta _4, \theta _5,\theta _6,\theta _7, \theta _8)\\
&=&(1.50386159, -1.36306552,  2.93979824,  1.01082799,\\
&&-1.56261475,  1.61429959,
 -0.02311273, -1.8108999).  
\end{eqnarray*}
The function value at the initial point is $895386751.0677216$. 

Point 3: 

\begin{eqnarray*}
&&(\theta _2,\theta _3,\theta _4, \theta _5,\theta _6,\theta _7, \theta _8)\\
&=&(2.89936055,  2.5913901,  -1.40975004, -2.76032304,\\
&&-3.05060738,  1.09171554,
  1.33525563, -1.85212602).  
\end{eqnarray*}
The function value at the initial point is $12479713199090.754$.

Point 4: 

\begin{eqnarray*}
&&(\theta _2,\theta _3,\theta _4, \theta _5,\theta _6,\theta _7, \theta _8)\\
&=&(-1.3335047,   2.76782837, -1.89518385,  2.52345111,\\
&&-0.33519698, -1.98794015,
  0.02088706, -1.09200044).  
\end{eqnarray*}
The function value at the initial point is $579425.218039767$.

\begin{table}[htp]
\fontsize{11}{11}\selectfont
  \centering
  \begin{tabular}{|l|c|c|c|c|c|c|c|c|}
  \hline
&ACR&BFGS	& Newton   & NewQ  & Rand&R-NewQ& Iner& Back\\
\hline
&\multicolumn{8}{c|}{Initial point: Point 1}\\
\hline
Iterations &1e+4&197&50  & 35 &50&35& 13&104 \\
\hline
$f$ &7e+7&19.707&Err & 1.2e+4 &Err & 1.2e+4& 2e+7&20.225 \\
\hline
$||\nabla f||$ &1e+10& 6e-10 & Err &8e-8 & Err&8e-8&0&1e-7 \\
\hline
Time &395.49&14.27&Err & 16.20 &Err& 16.24&0.500&34.982 \\
\hline
&\multicolumn{8}{c|}{Initial point: Point 2}\\
\hline
Iterations &66&79& 50 & 70 &47&70& 13&34 \\
\hline
$f$ &5e+11&19.596&Err & 20.151 &20.207 & 20.151& 5e+6&20.147 \\
\hline
$||\nabla f||$ &5e+13& 5e-8 & Err&1e-7&4e-8&1e-7  &0&8e-9 \\
\hline
Time &14.17&4.118& Err& 32.76 & 21.47&32.35& 0.479&11.768 \\
\hline
&\multicolumn{8}{c|}{Initial point: Point 3}\\
\hline
Iterations &0&176& 50 & 500 &50&500 & 13&500 \\
\hline
$f$ &1e+13&19.727&Err & 20.225 &Err& 20.147& 3e+9&3e+3 \\
\hline
$||\nabla f||$&1e+15& 7e-9 &Err  &2e-5 &Err& 2e-8 &0&92.72\\
\hline
Time &0&9.91& Err& 380.1 &Err & 235.6&0.484&201.9 \\
\hline
&\multicolumn{8}{c|}{Initial point: Point 4}\\
\hline
Iterations &1&83& 50 & 95&50&95& 14&65 \\
\hline
$f$ &2e+20&19.596&Err & 3e+3 &Err & 3e+3&  7e+4&20.225 \\
\hline
$||\nabla f||$ &1e+7& 3e-8 & Err &2e-8 &Err&2e-8 & 0&2e-7\\
\hline
Time &2.301&4.365&Err & 43.55 & Err&43.64& 0.583&21.996 \\
\hline

  \end{tabular}
   \caption{Performance of different optimization methods for the toy protein folding problem for the 10-mer ABBBABABAB at several  random initial points. The function values at the initial points are respectively: 4185029.6878152043; 895386751.0677216; 12479713199090.754 and 579425.218039767. For Newton's method and Random damping Newton's method: we oftenly encounter singular matrix error.}%
  \label{tab:ProteinFolding10}
\end{table}

\subsubsection{Griewank problem - The deterministic case} This is a well known test function in global optimization. It has the form: 
\begin{eqnarray*}
f(x_1,\ldots ,x_m)=1+\frac{1}{4000}\sum _{i=1}^mx_i^2-\prod _{i=1}^m\cos (x_i/\sqrt{i}). 
\end{eqnarray*}
It has a unique global minimum at the point $(0,\ldots ,0)$, where the function value is 0. The special property of it is that, in terms of the dimension $m$,  it has exponentially many local minima. However, \cite{locatelli} explained that indeed when the dimension increases, it can become more and more easier to find the global minimum. We present here some experiments with $m=15$. 

Table \ref{tab:Griewank1} presents the performance at 2 initial points:

Point 1: $(10,\ldots ,10)$ (which was the choice of \cite{kk} in the stochastic setting, see the next subsection). The function value at the initial point is $1.364$. 

Point 2 (randomly chosen): 

\begin{eqnarray*}
&&(-0.24657266, -5.45285145, -0.92531932, -5.68778641,  1.64861456,\\  &&5.65718487,
 -6.17919738,  2.95625737, -6.47274618, -0.47513139,\\
 &&-8.60344445,  0.74612203,
  3.70371132, -6.39595989,  7.5908029
).\end{eqnarray*} The function value at the initial point is $1.092$. 

\begin{table}[htp]
\fontsize{11}{11}\selectfont

  \centering
  \begin{tabular}{|l|c|c|c|c|c|c|c|c|}
  \hline
 &ACR&BFGS	& Newton   & NewQ  & Rand& R-NewQ& Iner& Back\\
\hline
&\multicolumn{8}{c|}{Initial point: Point 1}\\
\hline

Iterations &1e+4&62&5  & 7 &46&7& 13& 54\\
\hline
$f$ &1.234&0.054&0& 0 &1.002 &0 & 6e+29 &0.380 \\
\hline
$||\nabla f||$ &0.015& 6e-9 &0  & 7e-15&8e-12&7e-15 &0 &2e-8\\
\hline
Time &311&2.733& 1.492& 2.111& 14.022&2.070 & 0.254&17.488 \\
\hline

&\multicolumn{8}{c|}{Initial point: Point 2}\\
\hline

Iterations &1e+4&50&6 & 7 &108&7& 14& 230\\
\hline
$f$ &0.966&0&0& 0 &1.004 &0 & 5e+30 &0 \\
\hline
$||\nabla f||$ &0.152& 2e-9 &0  & 0&3e-11&0 &0 &5e-9\\
\hline
Time &228&1.766& 2.766& 2.028& 34.601&2.772 & 0.270&68.730 \\
\hline

  \end{tabular}
   \caption{Performance of different optimization methods for the Griewank test function in dimension $15$ at 2 initial points. Point 1 is  $(10,\ldots ,10)$, and Point 2 is randomly chosen. The function values at the initial points are respectively 1.364 and 1.092.} %
  \label{tab:Griewank1}
\end{table}

\subsubsection{Griewank problem - The stochastic case} Here, we consider the stochastic version of Griewank problem. It is to illustrate how New Q-Newton's method performs in the stochastic setting, which is more relevant to the set up in realistic DNN.  

We briefly recall some generalities of stochastic optimization. One  considers a function $f(x,\xi )$, which besides a variable $x$, also depends on a random parameter $\xi $. One wants to optimize the expectation of $f(x,\xi )$: Find $\min _{x}F(x)$, where $F(x)=E(f(x,\xi ))$. 

Assume now that one has an optimization method $A$ to be used to the above problem. Since computing the expectation is unrealistic in general, what one can do is as follows: 

At step n: choose randomly $N_n$ parameters $\xi _{n,1},\ldots ,\xi _{n,N}$, where $N_n$ can depend on $n$ and is usually chosen to be large enough. Then one approximates $F(x)$ by 
\begin{eqnarray*}
F_n(x)=\frac{1}{N_n}\sum _{i=1}^{N_n}f(x,\xi _{n,i}). 
\end{eqnarray*}

This is close to the mini-batch practice in Deep Learning, with a main difference is that in Deep Learning one trains a DNN on a large but finite set of data, and at each step n (i.e. epoch n) decompose the training set randomly into mini-batches to be used. The common practice is to use mini-batches of the same size $N_n=N$ fixed from beginning. There are also experiments with varying/increasing the mini-batch sizes, however this can be time consuming while not archiving competing results. There are also necessary modifications (such as rescaling of learning rates) in the mini-batch practice to obtain good performance, however in the experiments below we do not impose this to keep things simple.   

In this subsection we perform experiments on the stochastic version of the Griewank test function considered in the previous subsection. This problem was considered in \cite{kk}, where the dimension of $x=(x_1,\ldots ,x_m)$ is $m=10$ and of $\xi$ is $1$ (with the normal distribution $N(1,\sigma ^2)$), and $f(x,\xi)$ has the form: 
\begin{eqnarray*}
f(x,\xi )=1+\frac{1}{4000}||\xi x||^2-\prod _{i=1}^m\cos (x_i \xi /\sqrt{i}). 
\end{eqnarray*}
At each step, \cite{kk} chooses $N_n$ varying in an interval $[N_{min},N_{max}]$ according to a complicated rule. Here, to keep things simple, we do not vary $N_n$ but fix it as a constant from beginning. Also, we do not perform experiments on BFGS and Adaptive Cubic Regularization in the stochastic setting, because the codes of these algorithms are either not available to us or too complicated and depend on too many hyperparameters (and the performance is very sensitive to these hyperparameters) to be succesfully changed for the stochastic setting. We note however that the BFGS was tested in \cite{kk}, and interested readers can consult Table 8 in that paper for more detail. 

The settings in \cite{kk} are as follows: the dimension is 10, the $\sigma$ is chosen between 2 values $\sqrt{0.1}$ (with $N_{max}=500$) and $\sqrt{1}$ (with $N_{max}=1000$). We will also use these parameters in the below, for ease of comparison. We note that in \cite{kk}, time performance is not reported but instead the number of function evaluations (about 1.8 million for $\sigma=\sqrt{0.1}$, and about 6.3 million for $\sigma=\sqrt{1}$). Also, only the norm of gradient was reported in \cite{kk}, in which case it is ranged from $0.005$ to $0.01$.

\begin{table}[htp]
\fontsize{10}{10}\selectfont
  \centering
  \begin{tabular}{|l|c|c|c|c|c|c|}
  \hline
 	& Newton   & NewQ  & Rand& R-NewQ& Iner& Back\\
\hline
&\multicolumn{6}{c|}{$N_n=10$, $\sigma =\sqrt{0.1}$}\\
\hline

Iterations &1000  & 33 &1000&53& 14& 1000\\
\hline
$f$ &3.8e+18& 0 &5.4e+19 &0 & 2.2e+31 &1.079 \\
\hline
$||\nabla f||$  &6.2e+7  & 0&2.4e+8&0 &0 &0.008\\
\hline
Time & 587.853& 17.702& 586.482&26.820 & 0.667&612.408 \\

\hline
&\multicolumn{6}{c|}{$N_n=10$, $\sigma =\sqrt{1}$}\\
\hline

Iterations &1000  & 1000 &1000&1000& 13& 486\\
\hline
$f$ &5.3e+19& 1.7e+21 &9.3e+18 &5.15e+18 & 6.6e+29 &5.3e-14 \\
\hline
$||\nabla f||$  &2.9e+8  & 2.1e+9&1.3e+8&8.3e+7 &0 &1.1e-7\\
\hline
Time & 546.710& 503.274&507.936 &514.627 & 0.600&258.496 \\

\hline
&\multicolumn{6}{c|}{$N_n=100$, $\sigma =\sqrt{0.1}$}\\
\hline

Iterations &1000  & 1000 &1000&110& 13& 1000\\
\hline
$f$ &2.5e+17& 2.3e+18 &1e+18 &0 & 4.6e+29 &0.967 \\
\hline
$||\nabla f||$  &1.7e+7  & 5.2e+7&1e+8&0 &0 &0.002\\
\hline
Time & 3.7e+3& 1e+4& 3.6e+3&401 & 4.333&5.5e+3 \\
\hline

&\multicolumn{6}{c|}{$N_n=100$, $\sigma =\sqrt{1}$}\\
\hline

Iterations &1000 & 1000 &1000&1000& 13& 395\\
\hline
$f$ &6.3e+18& 7.8e+16 &2.1e+19 &8.5e+16 & 9.3e+29 &9.5e-13 \\
\hline
$||\nabla f||$  &1.1e+8  & 1.1e+8&1.9e+8&1.2e+7 &0 &6.3e-7\\
\hline
Time & 3.6e+3& 6.8e+3& 3.6e+3&5.9e+3 & 5.019&3.7e+3 \\
\hline

&\multicolumn{6}{c|}{$N_n=500$, $\sigma =\sqrt{0.1}$}\\
\hline

Iterations &1000 & 14 &1000&1000& 13& 1000\\
\hline
$f$ &8.6e+17& 0 &4.5e+18 &1.0e+19 & 4.4e+29 &0.964 \\
\hline
$||\nabla f||$  &3.1e+7  & 3.7e-16&6.9e+7&1.0e+18 &0 &0.014\\
\hline
Time & 1.0e+4& 142.838& 1.0e+4&1.0e+4 & 12.221&1.1e+4 \\
\hline

&\multicolumn{6}{c|}{$N_n=500$, $\sigma =\sqrt{1}$}\\
\hline

Iterations &1000 & 1000 &1000&17& 14& 361\\
\hline
$f$ &2.6e+18& 9.8e+18 &7.6e+18 &0 & 3e+21 &6.6e-13 \\
\hline
$||\nabla f||$  &7.3e+7  & 1.3e+8&3.8e+7&4e-16 &0 &9.9e-7\\
\hline
Time & 9.9e+3& 9.6e+3& 9.6e+3&160.256 & 12.324&3.7e+3 \\
\hline

&\multicolumn{6}{c|}{$N_n=1000$, $\sigma =\sqrt{0.1}$}\\
\hline

Iterations &1000 & 19 &1000&1000& 13& 1000\\
\hline
$f$ &2.1e+17& 0 &7.9e+16 &2.0e+17 & 4.6e+29 &0.945 \\
\hline
$||\nabla f||$  &1.5e+7  & 1.6e-16&9.4e+7&1.4e+7 &0 &0.003\\
\hline
Time & 20e+3& 365& 20e+3&19e+3 & 23.303&21e+3 \\
\hline

&\multicolumn{6}{c|}{$N_n=1000$, $\sigma =\sqrt{1}$}\\
\hline

Iterations &1000 & 1000 &1000&1000& 14& 347\\
\hline
$f$ &1.9e+20& 1.7e+18 &1.2e+18 &2.4e+17 & 3e+31 &2.5e-12 \\
\hline
$||\nabla f||$  &6.2e+8  & 5.9e+7&4.9e+7&2.2e+7 &0 &1.0e-6\\
\hline
Time & 20e+3& 19e+3& 20e+3&19e+3 & 25&7.3e+3 \\
\hline

  \end{tabular}
   \caption{ Performance of different optimization methods for the Griewank test function in the stochastic setting. The dimension is  $10$ and the initial point is  $(10,\ldots ,10)$. The function value of the deterministic Griewank test function $F(x)=E(f(x,\xi))$ at the initial point is $1.264$. Mini-batch size $N_n$ is fixed in every steps of each experiment.} %
  \label{tab:Griewank2}
\end{table}

\subsubsection{Finding roots of univariate meromorphic functions}\label{SectionExperimentMeromorphic} As discussed in Section \ref{SectionRootMeromorphicFunction}, given a non-constant univariate function $g(z)$, we will construct a function $f(x,y)=u(x,y)^2+v(x,y)^2$, where $z=x+iy$, $u=$ the real part of $g$ and $v=$ the imaginary part of $g$. Global minima of $f$ are exactly roots of $g$, at which the function value of $f$ is precisely $0$. We will apply different optimization algorithms to $f$.  See  Table \ref{tab:RootMeromorphic}. 

We will consider a tricky polynomial \cite{delves-lyness}, for which Lehmer's method encountered errors:  
\begin{eqnarray*}
g_1(z)&=&1250162561z^{16}+385455882z^{15}+845947696z^{14}+240775148z^{13}\\
&&+247926664z^{12}+64249356z^{11}+41018752z^{10}+9490840z^9\\
&&+4178260z^{18}+837860z^7+267232z^6+44184z^5\\
&&+10416z^4+1288z^3+242z^2+16z+2.
\end{eqnarray*}
The (randomly chosen) initial point is $(x,y)$ $=$ $(6.58202917,-7.93929341)$, at which point the function value of $f$ is $4e+50$. 

We will consider a simple function, for which the point $(0,0)$ is a saddle point of the function $f$:
\begin{eqnarray*}
g_2(z)=z^2+1. 
\end{eqnarray*}
We look at 2 (random initial) points. Point 1: $(x,y)$ $=$ $(4.0963223, -8.0935966)$, at which point the value of $f$ is $6482$. Point 2: (closer to the point $(0,0)$): $(x,y)=(0.317,-0.15)$, at which point the function value  of $f$ is $1.171$. 

We will consider a meromorphic function, which is the derivative of the function in formula (7.4) in \cite{delves-lyness}:
\begin{eqnarray*}
g_3(z)=\frac{d}{dz}[\frac{1-1.005e^{-z}+0.525e^{-2z}-0.475e^{-3z}-0.045e^{-4z}}{2.27e^{-z}-2.19e^{-2z}+1.86e^{-3z}-0.38e^{-4z}}] 
\end{eqnarray*}
The root of smallest absolute value of $g_3$ is near to $0.3430042+1.0339458i$. It has a pole near $-0.227+1.115i$ of absolute value just slightly larger than that of this root, and hence when one applies the method in \cite{delves-lyness} one has to be careful. We choose (randomly) an initial point which is close to the pole of $g_3$: $(x,y)=(-0.227,1.115)$, at which point the value of $f$ is 0.0415.   

We will consider a polynomial function with multiple roots: 
\begin{eqnarray*}
g_4(z)=z (z-1)^2(z-2)^3(z-5)^5. 
\end{eqnarray*}
We consider a (random) initial point $(x,y)=(4.48270522, 3.79095724)$, at which point the function value is  $1e+14$. 

We will consider the $101$-th summand of the series defining Riemann zeta function: 
\begin{eqnarray*}
g_5(z)=\sum _{n=1}^{101}n^{-z}. 
\end{eqnarray*}
Here, recall that $n^{-z}=e^{-ln(n)z}$. We choose an (randomly chosen) initial point $$(x,y)=(- 8.5209648, 1.28480016),$$ at which the function value is $1e+36$.

We will consider the $1001$-th summand of the series defining Riemann zeta function: 
\begin{eqnarray*}
g_6(z)=\sum _{n=1}^{1001}n^{-z}. 
\end{eqnarray*}
Here, recall that $n^{-z}=e^{-ln(n)z}$. We choose an (randomly chosen) initial point $$(x,y)=(9.76536427, -4.15647151),$$ at which the function value is $0.9977$.

\begin{table}[htp]
\fontsize{11}{11}\selectfont

  \centering
  \begin{tabular}{|l|c|c|c|c|c|c|c|c|}
  \hline
&ACR&BFGS	& Newton   & NewQ  & Rand&R-NewQ& Iner& Back\\
\hline
&\multicolumn{8}{c|}{Function $g_1$}\\
\hline
Iterations &Err&Err& 149 &149 &149&149&Err & Err\\
\hline
$f$ &Err&Err&6e-14 &6e-14  &6e-14 &6e-14 &Err & Err\\
\hline
$||\nabla f||$ &Err& Err &9e-11  &9e-11 &9e-11 &9e-11&Err&Err \\
\hline
Time &Err&Err& 2.076& 1.935 &1.922&1.959 &Err& Err\\
\hline
&\multicolumn{8}{c|}{Function $g_2$, Point 1}\\
\hline
Iterations &0&25&11 & 11&33&11& 4&14 \\
\hline
$f$ &6482&1e-23& 1e-39& 1e-40& 8e-22&1e-40&3e+78 &1e-22 \\
\hline
$||\nabla f||$ &2900& 1e-11 & 0 &0 &qe-10 &0&0&4e-11 \\
\hline
Time &0.002&0.107& 0.112& 0.112 &0.331&0.113&0.015&0.195 \\
\hline
&\multicolumn{8}{c|}{Function $g_2$, Point 2}\\
\hline
Iterations &4&10& 5& 9&19&9&6 &11 \\
\hline
$f$ &1e-10&4e-24&1 &3e-43 & 1&3e-43&2e+160 & 1e-24\\
\hline
$||\nabla f||$ &4e-5&  8e-12& 0 &0 &9e-10 &0&0&4e-12 \\
\hline
Time &0.014&0.062& 0.051&0.094  &0.188&0.092&0.020&0.148 \\
\hline
&\multicolumn{8}{c|}{Function $g_3$}\\
\hline
Iterations &Err&1& 13&18 &Err&18& Err&1 \\
\hline
$f$ &Err&0.040& 0.387&5e-28 &Err &5e-28&Err &0.040 \\
\hline
$||\nabla f||$ &Err&0.205  & 6e-10 & 3e-14&Err &3e-14&Err& 0.779\\
\hline
Time &Err&16.77& 15.43& 22.06 &Err&21.48&Err&3.810 \\
\hline
&\multicolumn{8}{c|}{Function $g_4$}\\
\hline
Iterations &46&132&56 & 56&54&56&Err & 405\\
\hline
$f$ &2e-9&8e-15&2e-14 &2e-14 &2e-14 &2e-14&Err &1e-13 \\
\hline
$||\nabla f||$ &7e-7& 8e-11 & 2e-11 & 2e-11&2e-11 &2e-11&Err&9e-11 \\
\hline
Time &0.159&0.558&0.572 & 0.578 &0.547&0.578&Err&5.358 \\
\hline
&\multicolumn{8}{c|}{Function $g_5$}\\
\hline
Iterations &95&2&111 &89 &107&89& Err& 71\\
\hline
$f$ &6e-11&Err&1 &1e-28 & 1&1e-28&Err &5e-23 \\
\hline
$||\nabla f||$ &3e-5& Err & 3e-11 & 1e-13& 4e-12&1e-13&Err&6e-11 \\
\hline
Time &4.242&8.656&16.40 &  13.39&15.87&13.39&Err&14.45 \\
\hline
&\multicolumn{8}{c|}{Function $g_6$}\\
\hline
Iterations &Err&2&18&46 &16&46& Err& 103\\
\hline
$f$ &Err&Err&0.9999 &1e-30 & 0.9999&1e-30&Err &6e-21 \\
\hline
$||\nabla f||$ &Err& Err & 2e-11 & 3e-14& 4e-11&3e-14&Err&7e-10 \\
\hline
Time &Err&79.04&23.55 &  59.94&20.85&60.05&Err&180.3 \\
\hline

  \end{tabular}
   \caption{Performance of different optimization methods for finding roots of meromorphic functions at random initial points. See Section \ref{SectionExperimentMeromorphic} for more detail. "Err" means some errors encountered.}%
  \label{tab:RootMeromorphic}
\end{table}

\subsection{Conclusions and Future work} 
\label{Subsection7}

In this paper, we proposed a new modification of Newton's method,  named New Q-Newton's method, and showed that it can avoid saddle points. Hence, in contrast to all existing versions of Newton's method in the literature, our New Q-Newton's method can be used for the purpose of finding local minima. We obtain the result by adapting the arguments in \cite{truong}, compare Subsection \ref{Subsection4}. We demonstrated the good performance of this method on various benchmark examples, against the algorithms Newton's method, BFGS, Random damping Newton's method and Inertial Newton's method. We also find that the random version of New Q-Newton's method (when the parameters $\delta _0,\ldots ,\delta _m$ are not fixed from beginning, but are randomly chosen at each iteration) can be easier to use while having similar or better performance as New Q-Newton's method. 

{\bf Open questions:} It is an open question of whether the condition that $f$ is $C^3$ is needed in Theorem \ref{TheoremMain}, or $C^2$ is enough. It is also an open question of whether part 2) of Theorem \ref{TheoremMain} also holds, even in the more general setting where $\nabla ^2f(x_{\infty})$ is not invertible. Experiments in the previous Subsection seem to indicate that this is the case. 

On the one hand, New Q-Newton's method has the same rate of convergence as the usual Newton's method, and hence is better than all GD (including Backtracking GD). On the other hand, unlike Backtracking GD \cite{truong-nguyen1, truong-nguyen2}, we still do not have a result guaranteeing convergence for New Q-Newton. Additionally, readers can easily check that New Q-Newton's method, when applied to functions, such as $f(x)=|x|$, which are not $C^2$ and whose Hessian is identically $0$, can diverge - even though the function has compact sublevels. This has been resolved in recent work by first author \cite{truong2021}, where Backtracking line search is incorporated into New Q-Newton's method. We obtain in particular the best theoretical guarantee for Morse cost functions, among all iterative optimization algorithms in the current literature, see Theorem \ref{TheoremMorse}.  

Analysing the proof of Theorem \ref{TheoremMain}, we see that only the facts that  the map $x\mapsto ||\nabla f(x)||^{1+\alpha}$ is $C^1$ near critical points of $f$ and in general locally Lipschitz continuous are needed. Therefore, Theorem \ref{TheoremMain}, and hence also Corollary \ref{CorollaryMain}, is valid for the following generalisation of New Q-Newton's method: 

{\bf Generalised New Q-Newton's method: } Let $\Delta =\{\delta _0,\delta _1,\delta _2,\ldots \}$ be a countable set of real numbers which has at least $m+1$ elements. Let $f:\mathbb{R}^m\rightarrow \mathbb{R}$ be a $C^2$ function. Let $h:[0,\infty ) \rightarrow \mathbb{R}$ be a function such that: i) $h(t)=0$ iff $t=0$,  ii) $h$ is $C^1$ near $t=0$, and iii) $h$ is locally Lipschitz continuous. For each $x\in \mathbb{R}^m$ such that $\nabla f(x)\not=0$, let $\delta (x)=\delta _j$, where $j$ is the smallest  number so that $\nabla ^2f(x)+\delta _jh(||\nabla f(x)||)Id$ is invertible. (If $\nabla f(x)=0$, then we choose $\delta (x)=\delta _0$.) Let $x_0\in \mathbb{R}^m$ be an initial point. We define a sequence of $x_n\in \mathbb{R}^m$ and invertible and symmetric $m\times m$ matrices $A_n$ as follows: $A_n=\nabla ^2f(x_n)+\delta (x_n) h(||\nabla f(x_n)||)Id$ and $x_{n+1}=x_n-w_n$, where $w_n=pr_{A_n,+}(v_n)-pr_{A_n,-}(v_n)$ and $v_n=A_n^{-1}\nabla f(x_n)$. 

One could choose $h(t)$ to be bounded, such as $h(t)=\min \{1,t^{1+\alpha}\}$, so that the perturbation $\delta (x)h(||\nabla f(x)||) Id$ is not too big when $||\nabla f(x)||$ is too big. We tested the experiments in the previous subsection with such bounded functions, and obtained similar results.  

The orthogonal diagonalization of real symmetric matrices needed in New Q-Newton's method is expensive when the dimension $m$ is large. The research in this topic is very extensive. Among some common such methods we find (the readers can find more information in the corresponding Wikipedia pages): the QR algorithm \cite{francis1, francis2} whose cost is $O(m^3)$, the Jacobi eigenvalue algorithm \cite{jacobi} whose cost is also $O(m^3)$, and the Divide-and-conquer eigenvalue algorithm \cite{cuppen} whose cost is again $O(m^3)$ - where $m$ is the dimension (however, the precise constant multiples involved are different). Hence, more work is needed to implement this method into huge scale optimisation problems such as in DNN. We are exploring this in an ongoing work. We note that an implementation, for the folklore heuristic version for some simple DNN or for the simple dataset MNIST has been given in \cite{dauphin-pascanu-gulcehre-cho-ganguli-bengjo}, which could be useful for the task of implementing in deeper DNN and for more difficult datasets and tasks. A more large scale implementation for the paper \cite{dauphin-pascanu-gulcehre-cho-ganguli-bengjo} is recently available on GitHub \cite{SFO}, which has some important differences to the algorithm proposed in our paper. There is also a problem of how to extend New Q-Newton's method to the infinite dimensional setting, so to obtain an analog of results in \cite{truong4} for Banach spaces. To this end, we note that tools needed (Morse's lemma and integral formula of projection on eigenspaces of linear operators) in the proof of Theorem \ref{TheoremMain} are available on Banach spaces \cite{palais, kato}, however there are still many differences between the finite and infinite dimensional spaces which hinder extending the proof of Theorem \ref{TheoremMain} to the infinite dimensional setting. 

Finally, we comment about the usefulness of implementations of Newton's method and its modifications in Deep Neural Networks (DNN). There are at least 2 issues. The first issue concerns saddle points. Since cost functions in DNN involves a lot of variables (for state of the art networks, we could have hundreds of million) and since generically the ratio between saddle points and minima of these cost functions grows exponentially \cite{bray-dean, dauphin-pascanu-gulcehre-cho-ganguli-bengjo}, we expect that a random initial point $x_0$ will most of the time close to a saddle point. Since Newton's method has the tendency of converging to the critical point nearest to the initial point, we expect that most of the time Newton's method will converge to saddle points of the cost functions appearing in DNN. Therefore, Newton's method per se is of limited usefulness, if the goal is to find local minima of the cost functions. On the other hand, it can be, because of its fast convergence when close to a minimum, for example, combined with Backtracking GD (whose convergence to local minima is guaranteed theoretically in generic situations). The same comment applies to modifications of Newton's method which have the same tendency of converging to the critical point nearest to the initial point. Of course, this comment does not apply to modifications of Newton's methods, such as our New Q-Newton's method, which are theoretically proven to avoid saddle points and to converge fast in generic situations. The second issue one faces when implementing Newton's method and modifications into DNN: Experiments in the previous Subsection show that Newton's method and its modifications could have a problem of convergence when the cost function is not $C^2$. We note that Backtracking GD has, on the other hand, better convergence properties. The combination between Backtracking line search and New Q-Newton's method, as proposed in \cite{truong2021}, helps to resolve the convergence issue as well.       

As mentioned in Section \ref{SubsectionLargeScale}, in any event, currently we are not aware of any implementation of Newton's method or variants in Deep Neural Networks that can compete with Gradient Descent and variants (including Backtracking Gradient Descent) - in particular on important indicators such as validation accuracy or running time. This is besides the fact that variants of Backtracking Gradient Descent have the best theoretical guarantee in the current literature, see Theorem \ref{Theorem1}. Therefore, having a new variant of Newton's method such as New Q-Newton's method (and New Q-Newton's method Backtracking), with a simple framework and implementation, working well on small scale (see the experimental results reported in the appendix)  while having good theoretical guarantees and applicable in general settings, can be beneficial and hence worth further study.

\newpage
\section{Appendix: Some experimental results on benchmark test functions}
\label{Subsection6}

In this appendix we will compare the performance of New Q-Newton's method against the usual Newton's method, BFGS \cite{Newton2} and Section 2.2 in \cite{bertsekas}, Adaptive Cubic Regularization \cite{nesterov-polyak, cartis-etal}, as well as Random damping Newton's method \cite{sumi}  and Inertial Newton's method \cite{bolte-etal}. Since in experiments in Table \ref{tab:Tasks}, the performance of Random damping Newton's method is always better or the same as the performance of the usual Newton's method, we report only the performance of Random damping Newton's method. 

In the experiments, we will use the Generalised New Q-Newton's method in Section \ref{Subsection7}, since it uses smaller quantities in general. We remark that if we use the basic version of New Q-Newton's method in Table \ref{table:alg} then we obtain similar results.  We choose $\alpha =1$ in the definition. Moreover, we will choose $\Delta =\{0,\pm 1\}$, even though for theoretical proofs we need $\Delta$ to have at least $m+1$ elements, where $m=$ the number of variables. The justification is that when running New Q-Newton's method it almost never happens the case that both $\nabla ^2f(x)$ and $\nabla ^2f(x)\pm ||\nabla f(x)||^2Id$ are not invertible. The experiments are coded in Python and run on a usual personal computer. For BFGS: we use the function scipy.optimize.fmin$\_$bfgs available in Python, and put  $gtol=1e-10$ and $maxiter=1e+6$. For Adaptive cubic regularization for Newton's method, we use the AdaptiveCubicReg module in the implementation in \cite{ARCGitHub}. We use the default hyperparameters as recommended there, and use "exact" for the hessian$\_$update$\_$method.  For hyperparameters in Inertial Newton's method, we choose $\alpha =0.5$ and $\beta =0.1$ as recommended by the authors of \cite{bolte-etal}. Source codes for the current paper are available at the GitHub link \cite{phuongGitHub}.

We will also compare the performance to Unbounded Two-way Backtracking GD \cite{truong-nguyen1}. The hyperparameters for Backtracking GD are fixed through all experiments as follows: $\delta _0=1$, $\alpha =0.5$ and $\beta =0.7$. Recall that this means we have the following in Armijo's condition: $f(x-\beta ^m\delta _0x)-f(x)\leq -\alpha \beta ^m\delta _0||\nabla f(x)||^2$, where $m\in \mathbb{Z}_{\geq 0}$ depends on $x$. Here we recall the essence of Unbounded and Two-way variants of Backtracking GD, see \cite{truong-nguyen1} for more detail. In the Two-way version, one starts the search for learning rate $\delta _n$ - at the step n- not at $\delta _0$ but at $\delta _{n-1}$, and allows the possibility of increasing $\delta \mapsto \delta /\beta $, and not just decreasing $\delta \mapsto \delta \beta$ as in the standard version of Backtracking GD. In the Unbounded variant, one allows the upper bound for $\delta _n$ not as $\delta _0$ but as $\max\{\delta _0,\delta _0||\nabla f(x_n)||^{-\kappa}\}$ for some constant $0<\kappa <1$. In all the experiments here, we fix $\kappa =1/2$. The Two-way version helps to reduce the need to do function evaluations in checking Armijo's condition, while the Unbounded version helps to make large step sizes near degenerate critical points and hence also helps with quicker convergence. 

The test functions include many different behaviours, among them are various benchmarks functions from the Wikipedia page for Newton's method \cite{Newton1} and from the Wikipedia page on test functions for optimization \cite{test}.  They include many benchmark functions from test sets such as CUTEer/st \cite{cute} and \cite{jamil-yang}.  

From Examples 1 to 15, we compute gradients and Hessians symbolically. However, from Examples 16 onward, we use the python package numdifftools \cite{num} to compute gradients and Hessian, since symbolic computation is not quite efficient. All the experiments are run on a usual personal computer. Experimental results are summarised in Tables \ref{tab:Tasks},  \ref{tab:Tasks2} and \ref{tab:Tasks3}.   

The unit for running time is seconds. In the experiments, running time will be reported for an algorithm only if it does not diverge to infinity or encounter errors.

{\bf Data for Table \ref{tab:Tasks2aa}}: Here the cost function is the Rosenbrock function $$f_D(x_1,\ldots ,x_D)=\sum _{i=1}^{D-1}f_7(x_i,x_{i+1}),$$ see  \cite{cute, test}, where $f_7(x,y)=(x-1)^2+100(y-x^2)^2$. This function has a global minimum at $x_1=\ldots =x_D=1$, with function value $0$. Here the dimension is $D=30$, and the initial point is randomly chosen with entries in the interval $[-20,20]$. 

Here, the function value of the initial point is $73511310.022068908795$. The initial point (which is randomly chosen in $[-20,20]^{30}$) is: 

[0.26010457, -10.91803423, 2.98112261, -15.95313456,  -2.78250859, -0.77467653,  -2.02113182,   9.10887908, -10.45035903,  11.94967756, -1.24926898,  -2.13950642,   

7.20804014,   1.0291962,    0.06391697, 2.71562242, -11.41484204,  10.59539405,  

12.95776531,  11.13258434,
   8.16230421, -17.21206152,  -4.0493811,  -19.69634293,  14.25263482, 3.19319406,  11.45059677,  18.89542157,  19.44495031,  -3.66913821].

{\bf Data for Table \ref{tab:Tasks3aa}}: Here the cost function is the Styblinski-Tang function $$f_{26}(x_1,\ldots ,x_D)=\sum _{i=1}^D(x_i^4-16x_i^2+5x_i)/2,$$ see \cite{jamil-yang}. The global minimum is at $(x_1,\ldots ,x_D)$ $=$ $(-2.903534$, $\ldots $, $-2.903534)$. The optimal function value is in the interval $(-39.16617D,-39.16616D)$. Here  the dimension is $D=100$. The initial point is randomly chosen with entries in the interval $[-1,1]$. 

 In the case reported here, the function value of the initial point is -247.248. The initial point (which is randomly chosen in $[-1,1]^{100}$) is: 
 
 [-0.15359941, -0.59005902,  0.45366905, -0.94873933,  0.52152264, -0.02738085, 
 
 0.17599868,  0.36736119,  0.30861332,  0.90622707,  0.10472251, -0.74494753, 
 
 0.67337336, -0.21703503, -0.17819413, -0.14024491, -0.93297061,  0.63585997, -0.34774991, -0.02915787, -0.17318147, -0.04669807,  0.03478713, -0.21959983, 
 
 0.54296245,  0.71978214, -0.50010954, -0.69673303,  0.583932,   -0.38138978,

 -0.85625076,  0.20134663, -0.71309977, -0.61278167,  0.86638939,  0.45731164,  -0.32956812,  0.64553452, -0.89968231,  0.79641384,  0.44785232,  0.38489415, 
 
 -0.51330669,  0.81273771, -0.54611157, -0.87101225, -0.72997209, -0.16185048, 
 
 0.38042508, -0.63330049,  0.71930612, -0.33714448, -0.24835364, -0.78859559, -0.07531072,  0.19087508, -0.95964552, -0.72759281,  0.13079216,  0.6982817,
  0.54827214,  0.70860856, -0.51314115, -0.54742142,  0.73180924, -0.28666226, 0.89588517,  
  
  0.35797497, -0.21406766, -0.05558283,  0.89932563, -0.16479757, -0.29753867,  
  
  0.5090385,   0.95156811,  0.8701501,   0.62499125, -0.22215331, 0.8355082,  
  
  -0.83695582, -0.96214862, -0.22495384, -0.30823426,  0.55635375, 0.38262606, -0.60688932, -0.04303575,  0.59260985,  0.5887739,  -0.00570958,
 -0.502354,    0.50740011, -0.08916369,  0.62672251,  0.13993309, -0.92816931, 0.50047918,  0.856543, 0.99560466, -0.44254687]

\begin{table}[htp]
\fontsize{8}{9}\selectfont
  \centering
  \begin{tabular}{|l|c|c|c|c|c|c|c|}
  \hline
 $\#$/Method&ACR&BFGS	& Newton   & NewQ  & Rand& Iner& Back\\
\hline
$1$ &5e+7&4.848e+7& 1.1e+7  & 1.1e+7 &2.1e+7 & 2e+24&1.9e+7 \\
\hline
$2$ &7e+6&4.393e+7& 8.8e+6 & 8.8e+6 &2.9e+8 &  5e+73&3.5e+6 \\
\hline
$3$ &1.9e+6&4.305e+7& 7.9e+6  & 7.9e+6 &1.8e+8 & 6e+79&4.6e+5 \\
\hline
$4$ &6.7e+5&4.284e+7& 1.5e+6 & 1.5e+6 &9.1e+8 & 3e+85&6.6e+4 \\
\hline
$5$ &2.4e+5&4.276e+7& 3.1e+5 & 3.1e+5 &4.8e+7 & 5e+90&1e+4 \\
\hline
$6$ &9.5e+4&4.274e+7& 6.8e+4  & 6.8e+4 &9.3e+6 & 3e+95&1838.355 \\
\hline
$7$ &3.5e+4&4.271e+7& 1.3e+4  & 1.4e+4 &1.2e+6 & $>$1e+100&872.696\\
\hline
$8$ &1.5e+4&4.268e+7& 9500.837 & 3.3e+4 &2.2e+5 & $>$1e+100&598.926 \\
\hline
$9$ &7100.203&4.264e+7& 2057.675 & 3.5e+6 &1.7e+5 & $>$1e+100&416.258 \\
\hline
$10$ &3653.787&4.257e+7& 2.8e+6  & 7.0e+5 &1.3e+5 & $>$1e+100&325.297 \\
\hline
$11$ &2040.195&4.248e+7& 5.7e+5  & 1.3e+5 &2.3e+5 & $>$1e+100&199.156 \\
\hline
$12$ &1163.326&4.242e+7& 1.1e+5  & 2.7e+4 &4e+4 & $>$1e+100&177.524 \\
\hline
$13$ &664.231&4.234e+7& 3.7e+5  & 5229.068 &3.4e+4 & $>$1e+100&150.866 \\
\hline
$14$ &392.672&4.219e+7& 7.4e+4 & 1069.167 &2.7e+4 & $>$1e+100&134.882 \\
\hline
$15$ &248.317&4.191e+7& 1.4e+4  & 282.508 &9.5e+7 & $>$1e+100&83.909 \\
\hline
$16$ &169.778&4.139e+7& 2907.813  & 304.788 &5.7e+7 & $>$1e+100&61.573 \\
\hline
$17$ &103.254&4.067e+7& 595.479  & 1245.013 &2.2e+7 & $>$1e+100&40.437 \\
\hline
$18$ &82.442&4.025e+7& 170.796 & 292.143 &1.8e+7 & $>$1e+100&30.304 \\
\hline
$19$ &50.973&4.005e+7& 99.278  & 111.045 &2.4e+6 & $>$1e+100&29.503 \\
\hline
$20$ &63.640&4.000e+7& 1.7e+5 & 1616.337 &1.4e+6 & $>$1e+100&29.455 \\
\hline
$21$ &31.978&3.996e+7& 2.9e+4  & 1379.143 &7.5e+5 & $>$1e+100&29.400 \\
\hline
$22$ &28.330&3.993e+7& 2.1e+4  & 9940.244 &4.2e+5 & $>$1e+100&29.350 \\
\hline
$23$ &27.805&3.989e+7& 957.175  & 1963.902 &2.3e+5 & $>$1e+100&29.232 \\
\hline
$24$ &26.979&3.988e+7& 199.549  & 320.529 &9.5e+4 & $>$1e+100&29.121 \\
\hline
$25$ &26.711&3.987e+7& 101.736  & 47.979 &7.5e+4 & $>$1e+100&28.983 \\
\hline
$26$ &25.624&3.985e+7& 36.899  & 6.388 &2.0e+4 & $>$1e+100&28.895 \\
\hline
$27$ &25.307&3.97e+7& 25.363  & 2.999 &1.2e+4 & $>$1e+100&28.819 \\
\hline
$28$ &24.262&3.5e+7& 25.046  & 2.201 &6228.802 & $>$1e+100&28.757 \\
\hline
$29$ &23.898&2.82e+7& 23.287  & 1.711 &2407.019 & $>$1e+100&28.687 \\
\hline
$30$ &22.901&2.801e+7& 23.970  & 0.943 &2159.139 & $>$1e+100&28.636 \\
\hline
$31$ &22.562&2.800e+7& 21.750& 1.480 &1573.550 & $>$1e+100&28.541 \\
\hline
$32$ &21.544&2.0e+7& 22.221  & 0.095 &938.376 & $>$1e+100&28.468\\
\hline
$33$ &21.168&1.0e+7& 20.238  & 0.065 &712.356 & $>$1e+100&28.391 \\
\hline
$34$ &20.186&3.7e+6& 20.744 & 3.1e-4 &598.098 & $>$1e+100&28.317 \\
\hline
$35$ &19.828&1.5e+6& 18.722  & 3.9e-7 &601.366 & $>$1e+100&28.272 \\
\hline
$36$ &18.827&6.6e+5& 19.355  & 2.3e-13 &392.864 & $>$1e+100&28.245 \\
\hline
$37$ &18.502&4.1e+5& 17.191  & 2.5e-25 &182.599 & $>$1e+100&28.152 \\
\hline
$38$ &17.467&2.4e+5& 17.582  & 5.2e-29 &336.663 & $>$1e+100&28.084 \\
\hline
$39$ &17.086&1.8e+5& 15.690  & 1.2e-29 &330.673 & $>$1e+100&28.036 \\
\hline
$40$ &16.108&1.2e+5& 16.384 &1.2e-29 &253.452 & $>$1e+100&28.009 \\
\hline
$41$ &15.765&9.8e+4& 14.150 & 1.2e-29 &171.692 & $>$1e+100&27.976 \\
\hline
$42$ &14.751&7.4e+4& 14.549 & 1.2e-29 &127.121 & $>$1e+100&27.955 \\
\hline
$43$ &14.417&5.6e+4& 12.651 &1.2e-29 &119.368 & $>$1e+100&27.925 \\
\hline
$44$ &13.390&4.6e+4& 13.230 & 1.2e-29 &96.072 & $>$1e+100&27.903 \\
\hline
$45$ &13.021&4.2e+4& 11.118 &1.2e-29 &85.073 & $>$1e+100&27.880 \\
\hline
$46$ &12.030&3.6e+4& 11.752 & 1.2e-29 &83.087 & $>$1e+100&27.862 \\
\hline
$47$ &11.711&2.6e+4& 9.603 &1.2e-29 &77.609 & $>$1e+100&27.832\\
\hline
$48$ &10.671&1.3e+4& 9.830 &1.2e-29 &134.342 & $>$1e+100&27.810 \\
\hline
$49$ &10.309&1.1e+4& 8.100  & 1.2e-29 &105.408 & $>$1e+100&27.789 \\
\hline
$50$ &9.309&8990.601& 9.408 & 1.2e-29 &644.618 & $>$1e+100&27.770 \\
\hline
 Time&7.570&4.600& 128.454 & 113.720 &114.362 & 3.561&120.102 \\
\hline
  \end{tabular}
   \caption{\footnotesize{Typical evolution of function values for several different algorithms, in the first 50 iterations. Cost function is the Rosenbrock function in dimension $D=30$. {\bf Legends}: "$\sharp$" for  iteration number, "Time" for running time in seconds,  "ACR" is Adaptive cubic regularization, "Newton" is Newton's method, "NewQ" is New Q-Newton's method, "Rand" is Random damping Newton's method, "Iner" Inertial Newton's method, "Back" is Unbounded Two-way Backtracking gradient descent. New Q-Newton's method is the only algorithm that clearly converges to the global minimum within 50 iterations.}}
  \label{tab:Tasks2aa}
\end{table}

\begin{table}[htp]
\fontsize{8}{9}\selectfont
  \centering
  \begin{tabular}{|l|c|c|c|c|c|c|c|}
  \hline
 $\#$/Method&ACR&BFGS	& Newton   & NewQ  & Rand& Iner& Back\\
\hline
$1$ &8.5e+8&-1862.231&5.533 & -1055.065 &-5.664 & 6.5e+5&-1244.750 \\
\hline
$2$ &8.5e+8&-2041.620& 19.522 & 8.4e+5 &-10.192 &  7.3e+13&-2320.487 \\
\hline
$3$ &8.5e+8&-2125.694& 19.561  & 4.7e+5&-9.488 &4.6e+36&-2808.259 \\
\hline
$4$ &8.5e+8&-2255.984& 19.561 & 8.6e+5&14.829 & $>$1e+100&-3074.425\\
\hline
$5$ &8.5e+8&-2426.524& 19.561 & 1.6e+5&17.946 &$>$1e+100&-3142.183 \\
\hline
$6$ &8.5e+8&-2559.800& 19.561  &2.7e+7 &18.847 & $>$1e+100&-3211.936 \\
\hline
$7$ &8.5e+8&-2697.341& 19.561 & 1.0e+8 &18.949& $>$1e+100&-3267.532\\
\hline
$8$ &8.5e+8&-2804.609& 19.561 & 2.1e+8 &19.475 & $>$1e+100&-3304.603 \\
\hline
$9$ &8.5e+8&-2896.972& 19.561 & 4.1e+7 &19.489 & $>$1e+100&-3308.610 \\
\hline
$10$ &8.4e+8&-3015.704& 19.561 & 8.2e+6 &19.560 & $>$1e+100&-3308.736 \\
\hline
$11$ &8.2e+8&-3187.799&19.561  & 1.6e+6&19.560 & $>$1e+100&-3308.737 \\
\hline
$12$ &8.2e+8&-3232.460&19.561  &3.1e+5 &19.561 & $>$1e+100&-3308.737 \\
\hline
$13$ &8.2e+8&-3239.224&19.561  & 5.8e+4 &19.561 & $>$1e+100&-3308.737 \\
\hline
$14$ &8.2e+8&-3251.150& 19.561 & 8584.995 &19.561 & $>$1e+100&-3308.737 \\
\hline
$15$ &8.2e+8&-3271.454& 19.561 & -1105.810 &19.561 & $>$1e+100&-3308.737 \\
\hline
$16$ &8.2e+8&-3275.160& 19.561 &-2932.534 &19.561 & $>$1e+100&-3308.737 \\
\hline
$17$ &8.2e+8&-3281.961& 19.561 &-3255.707 &19.561 & $>$1e+100&-3308.737 \\
\hline
$18$ &8.2e+8&-3291.847& 19.561 & -3304.050 &19.561 & $>$1e+100&-3308.737 \\
\hline
$19$ &8.2e+8&-3293.607& 19.561  & -3308.608 &19.561 & $>$1e+100&-3308.737 \\
\hline
$20$ &8.2e+8&-3296.911& 19.561 & -3308.737 &19.561& $>$1e+100&-3308.737 \\
\hline
$21$ &8.2e+8&-3299.434& 19.561  & -3308.737 &19.561& $>$1e+100&-3308.737 \\
\hline
$22$ &8.2e+8&-3303.019&19.561 &-3308.737 &19.561 & $>$1e+100&-3308.737 \\
\hline
$23$ &8.2e+8&-3307.570& 19.561 &-3308.737 &19.561 & $>$1e+100&-3308.737 \\
\hline
$24$ &8.2e+8&-3307.706& 19.561  & -3308.737 &19.561 & $>$1e+100&-3308.737 \\
\hline
$25$ &8.2e+8&-3307.959& 19.561  & -3308.737 &19.561 & $>$1e+100&-3308.737 \\
\hline
$26$ &8.0e+8&-3308.090& 19.561  &-3308.737 &19.561 & $>$1e+100&-3308.737 \\
\hline
$27$ &8.0e+8&-3308.317& 19.561  & -3308.737 &19.561 & $>$1e+100&-3308.737 \\
\hline
$28$ &8.0e+8&-3308.519& 19.561  & -3308.737&19.561 & $>$1e+100&-3308.737 \\
\hline
$29$ &8.0e+8&-3308.591& 19.561  &-3308.737 &19.561 & $>$1e+100&-3308.737 \\
\hline
$30$ &7.9e+8&-3308.699& 19.561  &-3308.737 &19.561 & $>$1e+100&-3308.737 \\
\hline
$31$ &7.9e+8&-3308.704& 19.561& -3308.737 &19.561 & $>$1e+100&-3308.737 \\
\hline
$32$ &7.9e+8&-3308.713& 19.561 & -3308.737 &19.561 & $>$1e+100&-3308.737\\
\hline
$33$ &7.9e+8&-3308.716& 19.561  &-3308.737 &19.561 & $>$1e+100&-3308.737 \\
\hline
$34$ &7.9e+8&-3308.723& 19.561 & -3308.737&19.561 & $>$1e+100&-3308.737 \\
\hline
$35$ &7.9e+8&-3308.727& 19.561  & -3308.737 &19.561 & $>$1e+100&-3308.737 \\
\hline
$36$ &7.6e+8&-3308.732& 19.561  & -3308.737 &19.561 & $>$1e+100&-3308.737 \\
\hline
$37$ &7.6e+8&-3308.736& 19.561  &-3308.737 &19.561 & $>$1e+100&-3308.737 \\
\hline
$38$ &7.6e+8&-3308.736& 19.561  & -3308.737 &19.561 & $>$1e+100&-3308.737 \\
\hline
$39$ &7.5e+8&-3308.737& 19.561  & -3308.737 &19.561 & $>$1e+100&-3308.737 \\
\hline
$40$ &7.5e+8&-3308.737& 19.561&-3308.737 &19.561 & $>$1e+100&-3308.737 \\
\hline
$41$ &6.9e+8&-3308.737& 19.561& -3308.737 &19.561 & $>$1e+100&-3308.737 \\
\hline
$42$ &6.9e+8&-3308.737& 19.561 & -3308.737 &19.561 & $>$1e+100&-3308.737 \\
\hline
$43$ &6.9e+8&-3308.737& 19.561 &-3308.737 &19.561 & $>$1e+100&-3308.737 \\
\hline
$44$ &6.9e+8&-3308.737& 19.561 &-3308.737 &19.561 & $>$1e+100&-3308.737 \\
\hline
$45$ &6.9e+8&-3308.737& 19.561 &-3308.737 &19.561 & $>$1e+100&-3308.737 \\
\hline
$46$ &6.8e+8&-3308.737& 19.561 & -3308.737 &19.561& $>$1e+100&-3308.737 \\
\hline
$47$ &6.8e+8&-3308.737& 19.561 &-3308.737 &19.561 & $>$1e+100&-3308.737\\
\hline
$48$ &6.8e+8&-3308.737& 19.561 &-3308.737 &19.561 & $>$1e+100&-3308.737 \\
\hline
$49$ &6.8e+8&-3308.737& 19.561 &-3308.737 &19.561 & $>$1e+100&-3308.737 \\
\hline
$50$ &6.8e+8&-3308.737& 19.561 & -3308.737 &19.561 & $>$1e+100&-3308.737 \\
\hline
Time &29.071&7.290& 501.080& 496.514 &474.127 & 4.7571&468.119 \\
\hline
  \end{tabular}
   \caption{\footnotesize{Typical evolution of function values for several different algorithms, in the first 50 iterations, for the Styblinski-Tang function in dimension $D=100$.  {\bf Legends}: "$\sharp$" for  iteration number, "Time" for running time in seconds,  "ACR" is Adaptive cubic regularization, "Newton" is Newton's method, "NewQ" is New Q-Newton's method, "Rand" is Random damping Newton's method, "Iner" Inertial Newton's method, "Back" is Unbounded Two-way Backtracking gradient descent. In this case, BFGS, New Q-Newton's method and Unbounded Two-way Backtracking GD are the  algorithms that have the best performance within 50 iterations.}} 
  \label{tab:Tasks3aa}
\end{table}

\begin{table}[htp]
\fontsize{8}{9}\selectfont
  \centering
  \begin{tabular}{|l|c|c|c|c|c|c|}
  \hline
~~~~~~~	& ACR& BFGS   & New Q  & Rand& Iner& Back\\
\hline
$f_1$ & E  &34/$x_{1,BFGS}$/C,G,*  & $x_{1,NewQ}$/D &$x_{1,Rand}$/D & $x_{1,Iner}$/D&30/$x_{1,Back}$/G,* \\
\hline
$f_2$ & E  & 1/$x_{2,BFGS}$/E  & 100/$x_{2,NewQ}$/G,*&$x_{2,Rand}$/D &$x_{2,Iner}$/D &100/$x_{2,Back}$/ G,*\\
\hline
$f_3$ & 8/$x_{3,ACR}$/E  & 2/$x_{3,BFGS}$/G  &22/$x_{3,NewQ}/G$  &$x_{3,Rand}$/D &$x_{3,Iner}$/D & 935/$x_{3,Back}$/C\\
\hline
$f_4$ & 3/$x_{4,ACR}$/L  & 4/$x_{4,BFGS}$/L  & 6/$x_{4,NewQ}$/L &6/$x_{4,Rand}$/L &1945/$x_{4,Iner}$/L & 1e+4/$x_{4,Back}$/L\\
\hline
$f_6$, 1 & 4/$x_{4,ACR}$/E,G,*  & 7/$x_{6,1,BFGS}$/G,*  & 10/$x_{6,1,NewQ}$/G,* &23/$x_{6,1,Rand}/U$ &E& 1e+4/$x_{6,1,Back}$/G,*\\
\hline
$f_6$, 2 &5/$x_{5,ACR}$/E,G ,* & 7/$x_{6,2,BFGS}$/G,*  & 9/$x_{6,2,NewQ}$/G,* &31/$x_{6,2,Rand}$/L &E& 1e+4/$x_{6,2,Back}$/G,*\\
\hline
$f_6$, 3 & 5/$x_{5,ACR}$/E,G,* & 2/$x_{6,3,BFGS}$/G,*  & 10/$x_{6,3,NewQ}$/G,* &33/$x_{6,3,Rand}$/G,* &E& 1e+4/$x_{6,3,Back}$/G,*\\
\hline
$f_7$ & 6/$x_{7,ACR}$/E,G,*  & 15/$x_{7,BFGS}$/G,*  & 6/$x_{7,NewQ}$/G,*  &121/$x_{7,Rand}$/G,* & E& 9556/$x_{7,Back}$/G,*\\
\hline
$f_8$ &17/$x_{8,ACR}$/E,G,*   &46/$x_{8,BFGS}$/G,*   &22/$x_{8,NewQ}$/G,*   &59/$x_{8,Rand}$/G,* & E&1e+4/$x_{8,Back}$/G,* \\
\hline

$f_9$ &0/$x_{9,ACR}$/E&5/$x_{9,BFGS}$/E  &$x_{9,NewQ}$  &E &$x_{9,Iner}$/D&1e+4/$x_{9,Back}$/G, *  \\
\hline

$f_{10}$ &4/$x_{10,ACR}$/G,*&6/$x_{10,BFGS}$/G,*  & 9/$x_{10,NewQ}$/G,* & 34/$x_{10,Rand}$/G,*&E& 1e+4/$x_{10,Back}$/G,* \\
\hline

$f_{11}$ &E& 5/$x_{11,BFGS}$ & 28/$x_{11,NewQ}$ &161/$x_{11,Rand}$ &3327/$x_{11,Iner}$& 6/$x_{11,Back}$ \\
\hline

$f_{12}$ &5/$x_{12,ACR}$& $x_{12,BFGS}$/D,* &$x_{12,NewQ}$/D,* &25/$x_{12,Rand}$/S &$x_{12,Iner}$/D,*&$x_{12,Back}$/D,*  \\
\hline

$f_{13}$ &3/$x_{13,ACR}$/G,*&4/$x_{13,BFGS}$/G,*  &1/$x_{13,NewQ}$/G,* &20/$x_{13,Rand}$/G ,*&4929/$x_{13,Iner}$/G,*&29/$x_{13,Back}$/G,*  \\
\hline

$f_{14}$ &6/$x_{14,ACR}$/G,*& 2/$x_{14,BFGS}$/G,* &4/$x_{14,NewQ}$/G,* &E &$x_{14,Iner}$/D& 1e+4/$x_{14,Back}$/G,* \\
\hline

$f_{15}$ &2/$x_{15,ACR}$/E& 2/$x_{15,BFGS}$/E &$x_{15,NewQ}$/D,* &E &$x_{15,Iner}$/D,*& $x_{15,Back}$/D,* \\
\hline

$f_{16}$, 1 &7/$x_{16,1,ACR}$& 13/$x_{16,1,BFGS}$/E &14/$x_{16,1,NewQ},*$&25/$x_{16,1,Rand}$ &$x_{16,1,Iner}$/D& 1e+4/$x_{16,1,Back}$\\
\hline

$f_{16}$, 2 &1e+4/$x_{16,2,ACR}$& 17/$x_{16,2,BFGS}$/G,* &23/$x_{16,2,NewQ}$ &34/$x_{16,2,Rand},*$ &$x_{16,1,Iner}$/D& 62/$x_{16,2,Back}$/G.* \\
\hline

$f_{17}$, 1 &5/$x_{17,1,ACR}$/*& 13/$x_{17,1,BFGS}$/* &8/$x_{17,1,NewQ}$ &35/$x_{17,1,Rand}$ &$x_{17,1,Iner}$/D& 1e+4/$x_{17,1,Back}$/* \\
\hline

$f_{17}$, 2 &4/$x_{17,2,ACR}$/E&11/$x_{17,2,BFGS}$/G,*  &11/$x_{17,2,NewQ}$/G,* &31/$x_{17,2,Rand}$ &$x_{17,2,Iner}$/D&1e+4/$x_{17,2,Back}$/G,*  \\
\hline

$f_{18}$ &20/$x_{18,ACR}$&48/$x_{18,BFGS}$ &21/$x_{18,NewQ}$&41/$x_{18,Rand}$ &$x_{18,Iner}$/D&4000/$x_{18,Back}$/G,* \\
\hline

$f_{19}$ &10/$x_{19,ACR}$/G,*&19/$x_{19,BFGS}$/G,*&172/$x_{19,NewQ}$&53/$x_{19,Rand}$ &E&214/$x_{19,Back}$/G,* \\
\hline

$f_{20}$, 1 &28/$x_{20,1,ACR}/E$&1/$x_{20,1,BFGS}$/E&1e+4/$x_{20,1,NewQ}$&E&$x_{20,1,Iner}$/D&1e+4/$x_{20,1,Back}$,* \\
\hline

$f_{20}$, 2 &1e+4/$x_{20,2,ACR}$/G&3/$x_{20,2,BFGS}$/E,G&1e+4/$x_{20,2,NewQ}$&E&$x_{20,2,Iner}$/D&1e+4/$x_{20,2,Back}$/G,* \\
\hline

$f_{21}$, 1 &8/$x_{21,1,ACR}/E$&11/$x_{21,1,BFGS}$&9/$x_{21,1,NewQ}$&24/$x_{21,1,Rand}$&$x_{21,1,Iner}$/D&479/$x_{21,1,Back}$/G ,*\\
\hline

$f_{21}$, 2 &4/$x_{21,2,ACR}$/G,*&11/$x_{21,2,BFGS}$/G,*&6/$x_{21,2,NewQ}$/G,*&28/$x_{21,2,Rand}$/G,*&$x_{21,1,Iner}$/D&485/$x_{21,2,Back}$/G,* \\
\hline

$f_{22}$, 1 &16/$x_{22,1,ACR}$/E&9/$x_{22,1,BFGS}$/E &10/$x_{22,1,NewQ}$/*&20/$x_{22,1,Rand}$&$x_{22,1,Iner}$/D&1e+4/$x_{22,1,Back}$\\
\hline

$f_{22}$, 2 &12/$x_{22,2,ACR}$&8/$x_{22,2,BFGS}$/* &5/$x_{22,2,NewQ}$/*&34/$x_{22,2,Rand}$/*&$x_{22,2,Iner}$/D&1e+4/$x_{22,2,Back}$/*\\
\hline

$f_{23}$&6/$x_{23,ACR}$/E,G,*&9/$x_{23,BFGS}$/G,* &6/$x_{23,NewQ}$/G,*&31/$x_{23,Rand}$&$x_{23,Iner}$/D&1e+4/$x_{23,Back}$/G,*\\
\hline

$f_{24},1$&1124/$x_{24,1,ACR}$/E,*&6/$x_{24,1,BFGS}$/E &$x_{24,1,NewQ}$/D&$x_{24,1,Rand}$/D&$x_{24,1,Iner}$/D&1e+4/$x_{24,1,Back}$\\
\hline

$f_{24},2$&254/$x_{24,2,ACR}$/E&35/$x_{24,2,BFGS}/G,*$ &10/$x_{24,2,NewQ}$/G,*&$35/x_{24,2,Rand}/G,*$&$x_{24,2,Iner}$/D&1e+4/$x_{24,2,Back}$/G,*\\
\hline

$f_{25},1$&63/$x_{25,1,ACR}/E,*$&0/$x_{25,1,BFGS}$/E &$x_{25,1,NewQ}$/D&$x_{25,1,Rand}$/D&$x_{25,1,Iner}$/D&1e+4/$x_{25,1,Back}$/*\\
\hline

$f_{25},2$&3425/$x_{25,2,ACR}$/E,*&27/$x_{25,2,BFGS}$/* &19/$x_{25,2,NewQ}$/*&49/$x_{25,2,Rand}$&$x_{25,2,Iner}$/D&1e+4/$x_{25,2,Back}$/*\\
\hline

$f_{26},1$&8/$x_{26,1,ACR}$/E,*&13/$x_{26,1,BFGS}$/E,* &13/$x_{26,1,NewQ}$/*&27/$x_{26,1,Rand}$&$x_{26,1,Iner}$/D&1e+4/$x_{26,1,Back}$/*\\
\hline

$f_{26},2$&8/$x_{26,2,ACR}$/E,G,*&9/$x_{26,2,BFGS}$/G,* &6/$x_{26,2,NewQ}$/G,*&35/$x_{26,2,Rand}$/G,*&$x_{26,2,Iner}$/D&1e+4/$x_{26,2,Back}$/G,*\\
\hline

\hline
  \end{tabular}
   \caption{Results of experiments on different Newton's method variant algorithms, with Unbounded Two-way Backtracking GD included for a comparison. The maximum number of iterates is  $1e+4$ (but for some examples we need to reduce this number to avoid errors such as division by zero), but the algorithm can stop before that because either the size of the gradient is smaller than a threshold ($1e-10$), there is error, or (for BFGS and ACR) some unknown reasons. The format is n/x/Remarks, where n is the number of iterates needed to achieve the point $x$. Legends: "E" for errors, "D" for divergence, "C" for convergence, "Back" for Unbounded Two-way Backtracking GD, "ACR" for Adaptive Cubic Regularization, "Iner" for Inertial Newton's method, "New Q" for New Q-Newton's method, "Rand" for Random damping Newton's method, "S" for (near ) a saddle point or local maximum, "L" for (near) a local minimum, "G" for near a global minimum, "U" for unstable convergence behaviour, "*": best performance.}
  \label{tab:Tasks}
\end{table}

\begin{table}[htp]
\fontsize{9}{9}\selectfont
  \centering
  \begin{tabular}{|l|c|c|c|c|c|c|c|}
  \hline
Iteration $\#$/Method&ACR&BFGS	& Newton   & NewQ  & Rand& Iner& Back\\
\hline
$1$ &5e+7&4.848e+7& 1.1e+7  & 1.1e+7 &2.1e+7 & 2e+24&1.9e+7 \\
\hline
$2$ &7e+6&4.393e+7& 8.8e+6 & 8.8e+6 &2.9e+8 &  5e+73&3.5e+6 \\
\hline
$3$ &1.9e+6&4.305e+7& 7.9e+6  & 7.9e+6 &1.8e+8 & 6e+79&4.6e+5 \\
\hline
$4$ &6.7e+5&4.284e+7& 1.5e+6 & 1.5e+6 &9.1e+8 & 3e+85&6.6e+4 \\
\hline
$5$ &2.4e+5&4.276e+7& 3.1e+5 & 3.1e+5 &4.8e+7 & 5e+90&1e+4 \\
\hline
$6$ &9.5e+4&4.274e+7& 6.8e+4  & 6.8e+4 &9.3e+6 & 3e+95&1838.355 \\
\hline
$7$ &3.5e+4&4.271e+7& 1.3e+4  & 1.4e+4 &1.2e+6 & $>$1e+100&872.696\\
\hline
$8$ &1.5e+4&4.268e+7& 9500.837 & 3.3e+4 &2.2e+5 & $>$1e+100&598.926 \\
\hline
$9$ &7100.203&4.264e+7& 2057.675 & 3.5e+6 &1.7e+5 & $>$1e+100&416.258 \\
\hline
$10$ &3653.787&4.257e+7& 2.8e+6  & 7.0e+5 &1.3e+5 & $>$1e+100&325.297 \\
\hline
$11$ &2040.195&4.248e+7& 5.7e+5  & 1.3e+5 &2.3e+5 & $>$1e+100&199.156 \\
\hline
$12$ &1163.326&4.242e+7& 1.1e+5  & 2.7e+4 &4e+4 & $>$1e+100&177.524 \\
\hline
$13$ &664.231&4.234e+7& 3.7e+5  & 5229.068 &3.4e+4 & $>$1e+100&150.866 \\
\hline
$14$ &392.672&4.219e+7& 7.4e+4 & 1069.167 &2.7e+4 & $>$1e+100&134.882 \\
\hline
$15$ &248.317&4.191e+7& 1.4e+4  & 282.508 &9.5e+7 & $>$1e+100&83.909 \\
\hline
$16$ &169.778&4.139e+7& 2907.813  & 304.788 &5.7e+7 & $>$1e+100&61.573 \\
\hline
$17$ &103.254&4.067e+7& 595.479  & 1245.013 &2.2e+7 & $>$1e+100&40.437 \\
\hline
$18$ &82.442&4.025e+7& 170.796 & 292.143 &1.8e+7 & $>$1e+100&30.304 \\
\hline
$19$ &50.973&4.005e+7& 99.278  & 111.045 &2.4e+6 & $>$1e+100&29.503 \\
\hline
$20$ &63.640&4.000e+7& 1.7e+5 & 1616.337 &1.4e+6 & $>$1e+100&29.455 \\
\hline
$21$ &31.978&3.996e+7& 2.9e+4  & 1379.143 &7.5e+5 & $>$1e+100&29.400 \\
\hline
$22$ &28.330&3.993e+7& 2.1e+4  & 9940.244 &4.2e+5 & $>$1e+100&29.350 \\
\hline
$23$ &27.805&3.989e+7& 957.175  & 1963.902 &2.3e+5 & $>$1e+100&29.232 \\
\hline
$24$ &26.979&3.988e+7& 199.549  & 320.529 &9.5e+4 & $>$1e+100&29.121 \\
\hline
$25$ &26.711&3.987e+7& 101.736  & 47.979 &7.5e+4 & $>$1e+100&28.983 \\
\hline
$26$ &25.624&3.985e+7& 36.899  & 6.388 &2.0e+4 & $>$1e+100&28.895 \\
\hline
$27$ &25.307&3.97e+7& 25.363  & 2.999 &1.2e+4 & $>$1e+100&28.819 \\
\hline
$28$ &24.262&3.5e+7& 25.046  & 2.201 &6228.802 & $>$1e+100&28.757 \\
\hline
$29$ &23.898&2.82e+7& 23.287  & 1.711 &2407.019 & $>$1e+100&28.687 \\
\hline
$30$ &22.901&2.801e+7& 23.970  & 0.943 &2159.139 & $>$1e+100&28.636 \\
\hline
$31$ &22.562&2.800e+7& 21.750& 1.480 &1573.550 & $>$1e+100&28.541 \\
\hline
$32$ &21.544&2.0e+7& 22.221  & 0.095 &938.376 & $>$1e+100&28.468\\
\hline
$33$ &21.168&1.0e+7& 20.238  & 0.065 &712.356 & $>$1e+100&28.391 \\
\hline
$34$ &20.186&3.7e+6& 20.744 & 3.1e-4 &598.098 & $>$1e+100&28.317 \\
\hline
$35$ &19.828&1.5e+6& 18.722  & 3.9e-7 &601.366 & $>$1e+100&28.272 \\
\hline
$36$ &18.827&6.6e+5& 19.355  & 2.3e-13 &392.864 & $>$1e+100&28.245 \\
\hline
$37$ &18.502&4.1e+5& 17.191  & 2.5e-25 &182.599 & $>$1e+100&28.152 \\
\hline
$38$ &17.467&2.4e+5& 17.582  & 5.2e-29 &336.663 & $>$1e+100&28.084 \\
\hline
$39$ &17.086&1.8e+5& 15.690  & 1.2e-29 &330.673 & $>$1e+100&28.036 \\
\hline
$40$ &16.108&1.2e+5& 16.384 &1.2e-29 &253.452 & $>$1e+100&28.009 \\
\hline
$41$ &15.765&9.8e+4& 14.150 & 1.2e-29 &171.692 & $>$1e+100&27.976 \\
\hline
$42$ &14.751&7.4e+4& 14.549 & 1.2e-29 &127.121 & $>$1e+100&27.955 \\
\hline
$43$ &14.417&5.6e+4& 12.651 &1.2e-29 &119.368 & $>$1e+100&27.925 \\
\hline
$44$ &13.390&4.6e+4& 13.230 & 1.2e-29 &96.072 & $>$1e+100&27.903 \\
\hline
$45$ &13.021&4.2e+4& 11.118 &1.2e-29 &85.073 & $>$1e+100&27.880 \\
\hline
$46$ &12.030&3.6e+4& 11.752 & 1.2e-29 &83.087 & $>$1e+100&27.862 \\
\hline
$47$ &11.711&2.6e+4& 9.603 &1.2e-29 &77.609 & $>$1e+100&27.832\\
\hline
$48$ &10.671&1.3e+4& 9.830 &1.2e-29 &134.342 & $>$1e+100&27.810 \\
\hline
$49$ &10.309&1.1e+4& 8.100  & 1.2e-29 &105.408 & $>$1e+100&27.789 \\
\hline
$50$ &9.309&8990.601& 9.408 & 1.2e-29 &644.618 & $>$1e+100&27.770 \\
\hline
Running time (seconds)&7.570&4.600& 128.454 & 113.720 &114.362 & 3.561&120.102 \\
\hline
  \end{tabular}
   \caption{Typical evolution of function values for several different algorithms, in the first 50 iterations. Cost function is the Rosenbrock function in dimension $D=30$.}
  \label{tab:Tasks2}
\end{table}

\begin{table}[htp]
\fontsize{9}{9}\selectfont
  \centering
  \begin{tabular}{|l|c|c|c|c|c|c|c|}
  \hline
Iteration $\#$/Method&ACR&BFGS	& Newton   & NewQ  & Rand& Iner& Back\\
\hline
$1$ &8.5e+8&-1862.231&5.533 & -1055.065 &-5.664 & 6.5e+5&-1244.750 \\
\hline
$2$ &8.5e+8&-2041.620& 19.522 & 8.4e+5 &-10.192 &  7.3e+13&-2320.487 \\
\hline
$3$ &8.5e+8&-2125.694& 19.561  & 4.7e+5&-9.488 &4.6e+36&-2808.259 \\
\hline
$4$ &8.5e+8&-2255.984& 19.561 & 8.6e+5&14.829 & $>$1e+100&-3074.425\\
\hline
$5$ &8.5e+8&-2426.524& 19.561 & 1.6e+5&17.946 &$>$1e+100&-3142.183 \\
\hline
$6$ &8.5e+8&-2559.800& 19.561  &2.7e+7 &18.847 & $>$1e+100&-3211.936 \\
\hline
$7$ &8.5e+8&-2697.341& 19.561 & 1.0e+8 &18.949& $>$1e+100&-3267.532\\
\hline
$8$ &8.5e+8&-2804.609& 19.561 & 2.1e+8 &19.475 & $>$1e+100&-3304.603 \\
\hline
$9$ &8.5e+8&-2896.972& 19.561 & 4.1e+7 &19.489 & $>$1e+100&-3308.610 \\
\hline
$10$ &8.4e+8&-3015.704& 19.561 & 8.2e+6 &19.560 & $>$1e+100&-3308.736 \\
\hline
$11$ &8.2e+8&-3187.799&19.561  & 1.6e+6&19.560 & $>$1e+100&-3308.737 \\
\hline
$12$ &8.2e+8&-3232.460&19.561  &3.1e+5 &19.561 & $>$1e+100&-3308.737 \\
\hline
$13$ &8.2e+8&-3239.224&19.561  & 5.8e+4 &19.561 & $>$1e+100&-3308.737 \\
\hline
$14$ &8.2e+8&-3251.150& 19.561 & 8584.995 &19.561 & $>$1e+100&-3308.737 \\
\hline
$15$ &8.2e+8&-3271.454& 19.561 & -1105.810 &19.561 & $>$1e+100&-3308.737 \\
\hline
$16$ &8.2e+8&-3275.160& 19.561 &-2932.534 &19.561 & $>$1e+100&-3308.737 \\
\hline
$17$ &8.2e+8&-3281.961& 19.561 &-3255.707 &19.561 & $>$1e+100&-3308.737 \\
\hline
$18$ &8.2e+8&-3291.847& 19.561 & -3304.050 &19.561 & $>$1e+100&-3308.737 \\
\hline
$19$ &8.2e+8&-3293.607& 19.561  & -3308.608 &19.561 & $>$1e+100&-3308.737 \\
\hline
$20$ &8.2e+8&-3296.911& 19.561 & -3308.737 &19.561& $>$1e+100&-3308.737 \\
\hline
$21$ &8.2e+8&-3299.434& 19.561  & -3308.737 &19.561& $>$1e+100&-3308.737 \\
\hline
$22$ &8.2e+8&-3303.019&19.561 &-3308.737 &19.561 & $>$1e+100&-3308.737 \\
\hline
$23$ &8.2e+8&-3307.570& 19.561 &-3308.737 &19.561 & $>$1e+100&-3308.737 \\
\hline
$24$ &8.2e+8&-3307.706& 19.561  & -3308.737 &19.561 & $>$1e+100&-3308.737 \\
\hline
$25$ &8.2e+8&-3307.959& 19.561  & -3308.737 &19.561 & $>$1e+100&-3308.737 \\
\hline
$26$ &8.0e+8&-3308.090& 19.561  &-3308.737 &19.561 & $>$1e+100&-3308.737 \\
\hline
$27$ &8.0e+8&-3308.317& 19.561  & -3308.737 &19.561 & $>$1e+100&-3308.737 \\
\hline
$28$ &8.0e+8&-3308.519& 19.561  & -3308.737&19.561 & $>$1e+100&-3308.737 \\
\hline
$29$ &8.0e+8&-3308.591& 19.561  &-3308.737 &19.561 & $>$1e+100&-3308.737 \\
\hline
$30$ &7.9e+8&-3308.699& 19.561  &-3308.737 &19.561 & $>$1e+100&-3308.737 \\
\hline
$31$ &7.9e+8&-3308.704& 19.561& -3308.737 &19.561 & $>$1e+100&-3308.737 \\
\hline
$32$ &7.9e+8&-3308.713& 19.561 & -3308.737 &19.561 & $>$1e+100&-3308.737\\
\hline
$33$ &7.9e+8&-3308.716& 19.561  &-3308.737 &19.561 & $>$1e+100&-3308.737 \\
\hline
$34$ &7.9e+8&-3308.723& 19.561 & -3308.737&19.561 & $>$1e+100&-3308.737 \\
\hline
$35$ &7.9e+8&-3308.727& 19.561  & -3308.737 &19.561 & $>$1e+100&-3308.737 \\
\hline
$36$ &7.6e+8&-3308.732& 19.561  & -3308.737 &19.561 & $>$1e+100&-3308.737 \\
\hline
$37$ &7.6e+8&-3308.736& 19.561  &-3308.737 &19.561 & $>$1e+100&-3308.737 \\
\hline
$38$ &7.6e+8&-3308.736& 19.561  & -3308.737 &19.561 & $>$1e+100&-3308.737 \\
\hline
$39$ &7.5e+8&-3308.737& 19.561  & -3308.737 &19.561 & $>$1e+100&-3308.737 \\
\hline
$40$ &7.5e+8&-3308.737& 19.561&-3308.737 &19.561 & $>$1e+100&-3308.737 \\
\hline
$41$ &6.9e+8&-3308.737& 19.561& -3308.737 &19.561 & $>$1e+100&-3308.737 \\
\hline
$42$ &6.9e+8&-3308.737& 19.561 & -3308.737 &19.561 & $>$1e+100&-3308.737 \\
\hline
$43$ &6.9e+8&-3308.737& 19.561 &-3308.737 &19.561 & $>$1e+100&-3308.737 \\
\hline
$44$ &6.9e+8&-3308.737& 19.561 &-3308.737 &19.561 & $>$1e+100&-3308.737 \\
\hline
$45$ &6.9e+8&-3308.737& 19.561 &-3308.737 &19.561 & $>$1e+100&-3308.737 \\
\hline
$46$ &6.8e+8&-3308.737& 19.561 & -3308.737 &19.561& $>$1e+100&-3308.737 \\
\hline
$47$ &6.8e+8&-3308.737& 19.561 &-3308.737 &19.561 & $>$1e+100&-3308.737\\
\hline
$48$ &6.8e+8&-3308.737& 19.561 &-3308.737 &19.561 & $>$1e+100&-3308.737 \\
\hline
$49$ &6.8e+8&-3308.737& 19.561 &-3308.737 &19.561 & $>$1e+100&-3308.737 \\
\hline
$50$ &6.8e+8&-3308.737& 19.561 & -3308.737 &19.561 & $>$1e+100&-3308.737 \\
\hline
Running time (seconds)&29.071&7.290& 501.080& 496.514 &474.127 & 4.7571&468.119 \\
\hline
  \end{tabular}
   \caption{Typical evolution of function values for several different algorithms, in the first 50 iterations, for the Styblinski-Tang function in dimension $D=100$.} 
  \label{tab:Tasks3}
\end{table}

{\bf Data for Table \ref{tab:Tasks2}}: Here the cost function is the Rosenbrock function $f_D(x_1,\ldots ,x_D)=\sum _{i=1}^{D-1}f_7(x_i,x_{i+1})$, see  \cite{cute, test}, where $f_7(x,y)=(x-1)^2+100(y-x^2)^2$. This function has a global minimum at $x_1=\ldots =x_D=1$, with function value $0$. Here the dimension is $D=30$, and the initial point is randomly chosen with entries in the interval $[-20,20]$. 

In the case reported here, the function value of the initial point is 73511310.022068908795. The initial point (which is randomly chosen in $[-20,20]^{30}$) is: 

[0.26010457, -10.91803423, 2.98112261, -15.95313456,  -2.78250859, -0.77467653,  -2.02113182,   9.10887908, -10.45035903,  11.94967756, -1.24926898,  -2.13950642,   7.20804014,   1.0291962,    0.06391697, 2.71562242, -11.41484204,  10.59539405,  12.95776531,  11.13258434,
   8.16230421, -17.21206152,  -4.0493811,  -19.69634293,  14.25263482, 3.19319406,  11.45059677,  18.89542157,  19.44495031,  -3.66913821].

{\bf Data for Table \ref{tab:Tasks3}}: Here the cost function is the Styblinski-Tang function $f_{26}(x_1,\ldots ,x_D)=$ $\sum _{i=1}^D(x_i^4-16x_i^2+5x_i)/2$, see \cite{jamil-yang}. The global minimum is at $(x_1,\ldots ,x_D)$ $=$ $(-2.903534$, $\ldots $, $-2.903534)$. The optimal function value is in the interval $(-39.16617D,-39.16616D)$. Here  the dimension is $D=100$. The initial point is randomly chosen with entries in the interval $[-1,1]$. 

 In the case reported here, the function value of the initial point is -247.248. The initial point (which is randomly chosen in $[-1,1]^{100}$) is: 
 
 [-0.15359941, -0.59005902,  0.45366905, -0.94873933,  0.52152264, -0.02738085, 0.17599868,  0.36736119,  0.30861332,  0.90622707,  0.10472251, -0.74494753, 0.67337336, -0.21703503, -0.17819413, -0.14024491, -0.93297061,  0.63585997, -0.34774991, -0.02915787, -0.17318147, -0.04669807,  0.03478713, -0.21959983, 0.54296245,  0.71978214, -0.50010954, -0.69673303,  0.583932,   -0.38138978,
 -0.85625076,  0.20134663, -0.71309977, -0.61278167,  0.86638939,  0.45731164,  -0.32956812,  0.64553452, -0.89968231,  0.79641384,  0.44785232,  0.38489415, -0.51330669,  0.81273771, -0.54611157, -0.87101225, -0.72997209, -0.16185048, 0.38042508, -0.63330049,  0.71930612, -0.33714448, -0.24835364, -0.78859559, -0.07531072,  0.19087508, -0.95964552, -0.72759281,  0.13079216,  0.6982817,
  0.54827214,  0.70860856, -0.51314115, -0.54742142,  0.73180924, -0.28666226, 0.89588517,  0.35797497, -0.21406766, -0.05558283,  0.89932563, -0.16479757, -0.29753867,  0.5090385,   0.95156811,  0.8701501,   0.62499125, -0.22215331, 0.8355082,  -0.83695582, -0.96214862, -0.22495384, -0.30823426,  0.55635375, 0.38262606, -0.60688932, -0.04303575,  0.59260985,  0.5887739,  -0.00570958,
 -0.502354,    0.50740011, -0.08916369,  0.62672251,  0.13993309, -0.92816931, 0.50047918,  0.856543, 0.99560466, -0.44254687]

{\bf Example 1:} We test for the function $f_1(x)=|x|^{1+1/3}$. This function has compact sublevels and has one global minimum at $0$, and no other critical points. Initial point $x_0=1$ (other points have similar behaviour). Points to be used in Table \ref{tab:Tasks}:
 \begin{eqnarray*}
 x_{1,ACR}&=&Error,\\
 x_{1,BFGS}&=&-3e-31,~Running ~time=0.0059\\
 x_{1,New Q}&=&\infty,\\
 x_{1,Rand}&=&\infty,\\
 x_{1,Iner}&=&\infty,\\
 x_{1,Back}&=&-2e-33,~Running~time=0.0023.
 \end{eqnarray*}

{\bf Example 2:}  We test for the function $f_2(x)=|x|^{1/3}$.  This function has compact sublevels and had has one global minimum at $0$, and no other critical points. (The result in this case is quite surprising, since the function here is more singular than the function in Experiment 1.) Initial point $x_0=1$ (other points have similar behaviour). Points to be used in Table \ref{tab:Tasks}:
 \begin{eqnarray*}
 x_{2,ACR }&=&Error,\\
 x_{2,BFGS}&=&1,~Running~time=0.006\\
 x_{2,New Q}&=&8e-31,~Running~time=0.291\\
 x_{2,Rand}&=&\infty,~\\
 x_{2,Iner}&=&\infty,\\
 x_{2,Back}&=&8e-85,~Running ~time =0.0093.
 \end{eqnarray*}

{\bf Example 3:} We test for the function $f_3(x)=e^{-1/x^2}$. This function has a global minimum at $x=0$, but also $\lim _{|x|\rightarrow\infty}f'(x)=0$. It does not have compact sublevels. Initial point is $x=3$ (other points have similar behaviour). Points to be used in Table \ref{tab:Tasks}:
 \begin{eqnarray*}
 x_{3,ACR}&=&0.230,~Running~time=0.0215,\\
 x_{3,BFGS}&=&-0.1129,~Running~time=0.0009\\
 x_{3,New Q}&=&0.1826,~Running~time=0.0013\\
 x_{3,Rand}&=&\infty,\\
 x_{3,Iner}&=&\infty,\\
 x_{3,Back}&=&0.1864,~Running~time=0.075.
 \end{eqnarray*}

{\bf Example 4:} We test for the function $f_4(x)=x^3sin(1/x)$.  This function has compact sublevels, and has countably many local maxima and local minima, and these converge to the singular point $0$. We choose the initial point to be $x_0=0.75134554$ (randomly chosen). Points to be used in Table \ref{tab:Tasks}:
 \begin{eqnarray*}
 x_{4,ACR }&=&0.2452,~f_4(x_{4,ACR})=-0.0118,~Running~time=0.0188,\\
 x_{4,BFGS}&=&-0.2452,~f_4(x_{4,BFGS})=-0.0118,~Running~time=0.006,\\
 x_{4,New Q}&=&-0.006,~f_4(x_{4,NewQ})=-2e-7,~Running~time=0.0004,\\
x_{4,Rand}&=&-0.2452,~f_4(x_{4,Rand})=-0.0118,~Running~time=0.001,\\
 x_{4,Iner}&=&-0.2452,~f_4(x_{4,Inder})=-0.0118,~Running~time=0.0429\\
 x_{4,Back}&=&0.2452,~f_4(x_{4,Back})=-0.0118,~Running~time=0.600.
 \end{eqnarray*}

(Interestingly, if the initial point is $1.01$, then after $1$ step, BFGS arrives at $0$.)

{\bf Example 5:} We test for the function $f(x)=x^3cos(1/x)$.  This function does not have compact sublevels, and has countably many local maxima and local minima, and these converge to the singular point $0$. We obtain similar results as in Example 4. 

{\bf Example 6:} We test for the function $f_6(x)=e^{x^2}-2x^3$.  This function has compact sublevels. It has $1$ local minimum, one global minimum and one local maximum. Depending on the randomly chosen initial point $x_0$, there are 3 typical behaviours.

\underline{Case 1}: Initial point $x_0=0.6$. Points to be used in Table \ref{tab:Tasks}:
  \begin{eqnarray*}
 x_{6,1,ACR}&=&1.08737056,~Running~time=0.020,\\
 x_{6,1,BFGS }&=&1.08737056,~Running~time=0.0020\\
 x_{6,1,New Q}&=&1.0873705644002134,~Running~time=0.00047\\
  x_{6,1,Iner}&=&Error,\\
 x_{6,1,Back}&=&1.08737041,~Running~time=0.427. 
\end{eqnarray*}
while $x_{6,1,Rand}=1.0873705644101557$ or $0.3872694020085596$ (unstable, varying on different runnings), with Running time $=$ 0.00057.

\underline{Case 2}: Initial point $x_0=0.8$. Points to be used in Table \ref{tab:Tasks}:
 \begin{eqnarray*}
 x_{6,2,ACR }&=&1.08737056,~Running~time=0.022,\\
 x_{6,2,BFGS}&=&1.08737056,~Running~time=0.00196,\\
 x_{6,2,New Q}&=&1.0873705644002136,~Running~time=0.00081,\\
 x_{6,2,Rand}&=&-3e-11,~Running~time=0.00056,\\
 x_{6,2,Iner}&=&Error,\\
 x_{6,2,Back}&=&1.08737057,~Running~time=0.429. 
  \end{eqnarray*}

\underline{Case 3}: Initial point $x_0=0.9$. Points to be used in Table \ref{tab:Tasks}:
 \begin{eqnarray*}
 x_{6,3,ACR }&=&1.08737056,~Running~time=0.033,\\
 x_{6,3,BFGS}&=&1.08737056,~Running~time=0.0013,\\
 x_{6,3,New Q}&=&1.0873705644002134,~Running~time=0.00048,\\
 x_{6,3,Rand}&=&1.0873705643974583,~Running ~time=0.00055,\\
 x_{6,3,Iner}&=&Error,\\
 x_{6,3,Back}&=&1.08737061,~Running~time=0.429. 
 \end{eqnarray*}

{\bf Example 7:} We test for the function $f_7(x,y)=(x-1)^2+100(y-x^2)^2$ (Rosenbrock's function), \cite{cute}.  This function  has compact sublevels. It has $1$ global minimum $(1,1)$, and no other critical points. Initial point $(0.55134554, 0.75134554)$, which is randomly chosen. Points to be used in Table \ref{tab:Tasks}:
 \begin{eqnarray*}
 x_{7,ACR }&=&(1,1),~Running~time=0.032,\\
 x_{7,BFGS}&=&(1,1),~Running~time=0.0042,\\
 x_{7,New Q}&=&(1,1),~Running~time=0.0036,\\
 x_{7,Rand}&=&(1,1),Running~time=0.0043,\\
 x_{7,Iner}&=&Error,\\
 x_{7,Back}&=&(1, 1),~Running~time=1.12. 
  \end{eqnarray*}

 {\bf Example 8}: We test for the function $f_8(x_1,x_2,x_3,x_4)$ $=f_7(x_1,x_2)+f_7(x_2,x_3)+f_7(x_3,x_4)$ (where $f_7(x,y)$ is Rosenbrock's function in Example 7).  This function  has compact sublevels, \cite{cute}. It has $1$ global minimum $(1,1,1,1,)$, and one local minimum near $(-1,1,1,1)$, and no other critical points. Initial point $(-0.7020, 0.5342, -2.0101, 2.002)$, which is randomly chosen. Points to be used in Table \ref{tab:Tasks}:
 \begin{eqnarray*}
 x_{8,ACR }&=&(0.999,0.999,0.999,0.999),~Running~time=0.087,\\
 x_{8,BFGS}&=&(1,1,1,1),~Running~time=0.0099,\\
 x_{8,New Q}&=&(1,1,1,1),~Running~time=0.0146,\\
 x_{8,Rand}&=&(1,1,1,1),~Running~time=0.0073,\\
  x_{8,Iner}&=&Error,\\
 x_{8,Back}&=&(0.999, 0.999, 0.999, 0.999),~Running~time=1.804. 
\end{eqnarray*}

 {\bf Example 9}: We test for the function $f_9(x,y)=100(y-|x|)^2+|1-x|$ (introduced in \cite{bolte-etal}).  This function  has compact sublevels, but it is not even $C^1$. On the other hand, it is smooth on a dense open subset of $\mathbb{R}^2$.  It has one global minimum at $(1,1)$. Initial point $(-0.99998925, 2.00001188)$, which is randomly chosen. Points to be used in Table \ref{tab:Tasks}:
 \begin{eqnarray*}
 x_{9,ACR }&=&(-0.99998925, 2.00001188),\\
 x_{9,BFGS}&=&(0.31505191, 0.31253145),~Running~time=0.008,\\
 x_{7,Rand}&=&Error,\\
x_{9,Iner}&=&\infty,\\
 x_{9,Back}&=&(1, 0.9978),~Running~time=0.920.
 \end{eqnarray*}  

$ x_{9,New Q}=$ a "near" cycle $(0.49875934, 0.5012469)$ $\mapsto$  $(1.0012469,  0.99875934)$ $\mapsto$ $w_1=$ $(0.49875934, 0.5012469)$, with Running time $=$ 2.502. {\bf Remark:} There are some interesting phenomena to note. First, one point in the cycle $(1.0012469,  0.99875934)$ is  {\bf close} to the global minimum $(1,1)$. Second, if we choose a different random initial point, then we still arrive at one similar "near" cycle but does not converge. Also, it is interesting that if we use the basic version of New Q-Newton's method, in Table \ref{table:alg}, we get similar near cycles, even though the size of the cycle may change. At the moment, it is not clear to us whether this could only be a consequence of computational errors or an intrinsic property of this special function. (We speculate that the first reason could be more possible.) In this example,  note that {\bf only} Two-way Backtracking GD can clearly converge to the global maximum, even though a bit slow.

 {\bf Example 10}: We test for the function $f_{10}(t)=(t^4/4)-t^2+2t$ (mentioned in \cite{Newton1}).  This function  has compact sublevels. It has one global minimum, 1 local minimum and 1 local maximum. Initial point $0$ (this point is specially chosen to illustrate that Newton's method may enter an infinite cycle, in this case $0\mapsto 1\mapsto 0$,  without convergence).  Points to be used in Table \ref{tab:Tasks}:
 \begin{eqnarray*}
 x_{10,ACR }&=&-1.76929235,~Running~time=0.016,\\
 x_{10,BFGS}&=&-1.76929235,~Running~time=0.0018,\\
 x_{10,New Q}&=&-1.769292354,~Running~time=0.0017,\\
 x_{10,Rand}&=&-1.769292354,~Running~time=0.0010,\\
 x_{10,Iner}&=&Error,\\
 x_{10,Back}&=&-1.76929237,~Running~time=0.491.
 \end{eqnarray*}  

 {\bf Example 11}: We test for the function $f_{11}(t)=4/3 ci(2/t)+t(t^2-2)sin(2/t)/3+t^2/2+t^2cos(2/t)/3$ (mentioned in \cite{Newton1}). Initial point $1.00001188$ (randomly chosen). Points to be used in Table \ref{tab:Tasks}:
 \begin{eqnarray*}
 x_{11,ACR }&=&Error,\\
 x_{11,BFGS}&=&4e-14,~Running~time=0.0017,\\
 x_{11,New Q}&=&3e-11,~Running~time=0.0008,\\
 x_{11,Rand}&=&3e-11,~Running~time=0.0123,\\
 x_{11,Iner}&=&9e-11,~Running~time=0.053,\\
x_{11,Back}&=&-3e-25,~Running~time=0.001.
 \end{eqnarray*}

{\bf Example 12:}  We test for the function $f_{12}(x,y)=x^2+y^2+4xy$. This function has only one critical point $(0,0)$, which is non-degenerate and is a saddle point. A good method should diverge. The function does not have compact sublevels. Initial point $(1,2)$, other points have similar behaviour. Points to be used in Table \ref{tab:Tasks}:
 \begin{eqnarray*}
 x_{12,ACR }&=&(-18.881,14.839),~Running~time=0.018,\\
 x_{12,BFGS}&=&\infty,\\
 x_{12,New Q}&=&\infty,\\
 x_{12,Rand}&=&(3e-12,6e-12),~Running~time=0.0036,\\
 x_{12,Iner}&=&\infty,\\
 x_{12,Back}&=&\infty.
 \end{eqnarray*}

 {\bf Example 13:} We test for the function $f_{13}(x,y)=x^2+y^2+xy$. This function has compact sublevel. It has only one critical point $(0,0)$, which is non-degenerate global minimum. Initial point $(0.55134554, 0.75134554)$, which is randomly chosen. Points to be used in Table \ref{tab:Tasks}:
 \begin{eqnarray*}
 x_{13,ACR }&=&(-3e-6,-5e-6),~Running~time=0.025\\
 x_{13,BFGS}&=&(-1e-11,-1e-11),~Running~time=0.0016,\\
 x_{13,New Q}&=&(-2e-32,9e-32),~Running~time=0.0093,\\
 x_{13,Rand}&=&(6e-7,8e-7),~Running~time=0.003,\\
 x_{13,Iner}&=&(7e-11,-7e-11),~Running~time=0.0092,\\
 x_{13,Back}&=&(-8e-12,6e-12),~Running~time=0.0049.
 \end{eqnarray*}

 {\bf Example 14:}  We test for the function $f_{14}(x,y)=x^2+y^2+2xy$. This function has global minima on the line $x+y=0$, and no other critical points. Initial point $(0.55134554, 0.75134554)$, which is randomly chosen. Points to be used in Table \ref{tab:Tasks}:
 \begin{eqnarray*}
 x_{14,ACR }&=&(73.924,-73.924),~Running~time=0.059,\\
 x_{14,BFGS}&=&(-0.1,0.1),~Running~time=0.0021,\\
 x_{14,New Q}&=&(-0.1,0.1),~Running~time=0.0025,\\
 x_{14,Rand}&=&Error,\\
 x_{14,Iner}&=&\infty,\\
 x_{14,Back}&=&(-0.1,0.1),~Running~time=0.938.
 \end{eqnarray*}
 
 {\bf Example 15:} Here we test for a homogeneous function $f_{15}$ of degree $2$ in $3$ variables, whose Hessian matrix is:  
 
 \[ \left( \begin{array}{ccc}
-23&-61&40\\
-61&-39.5&155\\
40&155&-50\\
\end{array}\right) \]

 The Hessian matrix is not invertible, it has one positive and one negative eigenvalue. Hence, the critical points of this function are all generalised saddle points, but they are degenerate. A good method should diverge. Initial point $(0.00001188, 0.00002188, 0.00003188)$, which is randomly chosen. Points to be used in Table \ref{tab:Tasks}:
 \begin{eqnarray*}
x_{14,ACR }&=&(-75.032,-150.111,149.953),~Running~time=0.034,\\
 x_{15,BFGS}&=&\infty,\\
 x_{15,New Q}&=&\infty,\\
 x_{14,Rand}&=&Error,\\
 x_{15,Iner}&=&\infty,\\
 x_{15,Back}&=&\infty.
 \end{eqnarray*}
 
 {\bf Example 16:} We test for the Ackley function $f_{16}(x_1,\ldots ,x_D)=$ $-20*exp[-0.2*\sqrt{0.5\sum _{i=1}^Dx_i^2}]-exp$ $[0.5*\sum _{i=1}^D\cos (2\pi x_i)]$ $+e+20$, see \cite{jamil-yang, test}. The global minimum is at $(x_1,\ldots ,x_D)=(0,\ldots ,0)$. We choose $D=3$. Depending on the randomly chosen initial point $x_0$, there are 2 typical behaviours.

 \underline{Case 1:} The initial point is $(-2.94501548, -1.81794532, -2.44883475)$ (randomly chosen). Points to be used in Table \ref{tab:Tasks}:
 \begin{eqnarray*}
 x_{16,1,ACR }&=&(-2.963, -1.975, -7e-7),~f_{16}(x_{16,1,ACR})=6.777,~Running~time=0.035,\\
 x_{16,1,BFGS}&=&(-2.970, -1.980, -1.980),~f_{16}(x_{16,1,BFGS})=7.546,~Running~time=0.115,\\
 x_{16,1,New Q}&=&(-1.974, -1.974,-1.974),~f_{16}(x_{16,1,NewQ})=6.559,~Running~time=0.348,\\
 x_{16,1,Rand}&=&(-2.945, -1.817,-2.448),~f_{16}(x_{16,1,Rand})=8.753,~Running~time=0.571,\\
 x_{16,1,Iner}&=&\infty,\\
 x_{16,1,Back}&=&(-2.970, -1.980, -1.980),~f_{16}(x_{16,1,Back})=7.546,~Running~time=307.877.
 \end{eqnarray*}
 
 \underline{Case 2:} The initial point is $(0.01,0.02,-0.07)$ (closer to the global minimum). In this case we see that the behaviour is much better.  Points to be used in Table \ref{tab:Tasks}:
 \begin{eqnarray*}
 x_{16,2,ACR }&=&(-0.007,-0.015,0.041),~f_{16}(x_{16,2,ACR})=0.137,~Running~time=59.160,\\
 x_{16,2,BFGS}&=&(-8e-14,-1e-13,8e-14),~Running~time=0.282,\\
 x_{16,2,New Q}&=&(1e-15,-1e-16,1.946),~Running~time=0.598,\\
 x_{16,2,Rand}&=&(-1e-15,-4e-15,-0.617),~Running ~time=0.801,\\
 x_{16,2,Iner}&=&\infty,\\
 x_{16,2,Back}&=&(-1e-12,1e-12,-1e-12),~Running~time=1.546.
 \end{eqnarray*}
 
 {\bf Example 17: } We test for the Rastrigin function $f_{17}(x_1,\ldots ,x_D)=$ $A*D$ $+\sum _{i=1}^D(x_i^2-A\cos (2\pi x_i))$, see \cite{test}. The global minimum is at $(x_1,\ldots ,x_D)$ $=(0,\ldots ,0)$. We choose $D=4$, $A=10$. Depending on the randomly chosen initial point $x_0$, there are 2 typical behaviours.

 \underline{Case 1:} The initial point $(-4.66266579, -2.69585675, -3.08589085, -2.25482451)$ (randomly chosen). Points to be used in Table \ref{tab:Tasks}:
 \begin{eqnarray*}
 x_{17,1,ACR }&=&(-4.974, -2.984, -2.984, -1.989),~f_{17}(x_{17,1,ACR})=46.762,~Running~time=0.032,\\
 x_{17,1,BFGS}&=&(-4.974, -2.984, -2.984, -1.989),~f_{17}(x_{17,1,BFGS})=46.762,~Running~time=0.135,\\
 x_{17,1,New Q}&=&(-4.974,-2.984,-2.984,3.979),~f_{17}(x_{17,1,NewQ})=58.702,~Running~time=0.241,\\
 x_{17,1,Rand}&=&(-4.523, -1.990,-2.984 ,-13.926),~f_{17}(x_{17,1,Rand})=248.282,~Running~time=0.950,\\
 x_{17,1,Iner}&=&\infty,\\
 x_{17,1,Back}&=&(-4.974, -2.984,-2.984,-1.989),~f_{17}(x_{17,1,Back})=46.762,~Running~time=278.342.
 \end{eqnarray*}
 
 \underline{Case 2:} The initial point is $(0.01, 0.5,-0.07, -0.3)$ (closer to the global minimum). We see that the convergence is better. Points to be used in Table \ref{tab:Tasks}:
 \begin{eqnarray*}
 x_{17,2,ACR }&=&(-1e-8,1.989,4e-8,1e-7),~f_{17}(x_{17,2,ACR})=3.979,~Running~time=0.025,\\
 x_{17,2,BFGS}&=&(3e-12,-1e-11,-6e-12,2e-11),~Running~time=0.438,\\
 x_{17,2,New Q}&=&(-5e-18,2e-15,-1e-16,-1e-16),~Running~time=0.421,\\
 x_{17,2,Rand}&=&(1e-15,0.502 ,-4e-15,-0.502),~f_{17}(x_{17,2,Rand})=40.502,~Running~time=1.223,\\
 x_{17,2,Iner}&=&\infty,\\
 x_{17,2,Back}&=&(-1e-10,3e-10,-8e-10,-2e-10),~Running~time=295.757.
 \end{eqnarray*}

 {\bf Example 18:} Rosenbroch's function in higher dimension \cite{cute, test}: $$f_{18}(x_1,\ldots ,x_D) = \sum _{i=1}^{D-1}f_7(x_{i},x_{i+1}),$$ where $f_7(.,.)$ is the Rosenbrock's function in Example 7. It has a global minimum $(1,1,\ldots ,1)$.  We check for example in the case the dimension is $D=7$. The initial point is $(-2.95108579,$ $-0.76552935,$  $1.83618076,$ $-0.6336922,$   $1.33774087,$ $-0.93499206,$ $3.51430143)$, which is randomly chosen. Points to be used in Table \ref{tab:Tasks}:
 \begin{eqnarray*}
 x_{18,ACR }&=&(-0.992,0.995,0.996, 0.996, 0.993, 0.987, 0.976),~f_{18}(x_{18,ACR})=3.985,~Running~time=0.251,\\
 x_{18,BFGS}&=&(-0.992,0.995,0.996, 0.996, 0.993, 0.987, 0.976),~f_{18}(x_{18,BFGS})=3.985,~Running~time=0.727,\\
 x_{18,New Q}&=&(-0.992,0.995,0.996, 0.996, 0.993, 0.987, 0.976),~f_{18}(x_{18,New Q})=3.985,~Running~time=2.029,\\
 x_{18,Rand}&=&(-0.992,0.995,0.996, 0.996, 0.993, 0.987, 0.976),~f_{18}(x_{18,Rand})=3.985,~Running~time=4.005,\\
 x_{18,Iner}&=&\infty,\\
 x_{18,Back}&=&(1,1,1,1,1,1,1),~Running~time=437.373.
 \end{eqnarray*}  
 
{\bf Example 19:} Beale's function $f(x,y)=$ $(1.5-x+xy)^2$ $+(2.25-x-xy^2)^2$ $+(2.625-x-xy^3)^2$, see \cite{jamil-yang, test}. The global minimum is $(x,y)$ $=(3,0.5)$. The initial point is $(-0.52012358,$ $-1.28227229)$, which is randomly chosen. Points to be used in Table \ref{tab:Tasks}:
 \begin{eqnarray*}
 x_{19,ACR }&=&(2.999,0.4999),~Running~time=0.039,\\
 x_{19,BFGS}&=&(3,0.5),~Running~time=0.116,\\
 x_{19,New Q}&=&(1e-21,  -1e+7),~f_{19}(x_{19,NewQ})=7.3125,~Running~time=1.452,\\
 x_{19,Rand}&=&(-1e-13,  1),~f_{19}(x_{19,Rand})=14.203,~Running~time=0.499,\\
 x_{19,Iner}&=&Error,\\
 x_{19,Back}&=&(3,0.5),~Running~time=2.468.
 \end{eqnarray*}  

{\bf Example 20:} Bukin function $\#6$: $f_{20}(x,y)=100\sqrt{|y-0.01x^2|}+0.01|x+10|$, see \cite{jamil-yang, test}.  The global minimum is $(x,y)=(-10,1)$. Depending on the randomly chosen initial point $x_0$, there are 2 typical behaviours.

\underline{Case 1:} The initial point is $(4.38848192 , -3.47943683)$ (randomly chosen). Points to be used in Table \ref{tab:Tasks}:
 \begin{eqnarray*}
 x_{20,1,ACR }&=&(2.653,-1.940),~f_{20}(x_{20,1,ACR})=141.929,~Running~time=0.046,\\
 x_{20,1,BFGS}&=&(4.067, 0.166),~f_{20}( x_{20,1,BFGS})=3.029,~Running~time=0.180,\\
 x_{20,1,New Q}&=&(-0.149, -3.671),~f_{20}(x_{20,1,New Q})=191.723,~Running~time=93.671,\\
 x_{20,1,Rand}&=&Error,\\
 x_{20,1,Iner}&=&\infty,\\
 x_{20,1,Back}&=&(3.994, 0.160),~ f_{20}(x_{20,1,Back})=2.413,~Running~time=102.196.
 \end{eqnarray*}

\underline{Case 2:} The initial point is $(-9.7,0.7)$ (closer to the global minimum). Points to be used in Table \ref{tab:Tasks}:
 \begin{eqnarray*}
 x_{20,2,ACR }&=&(-9.600,1.001),~f_{20}(x_{20,2,ACR})=28.247,~Running~time=26.470,\\
 x_{20,2,BFGS}&=&(-9.653,  0.932),~f_{20}(x_{20,2,BFGS})=1.038,~Running~time=0.177,\\
x_{20,2,New Q}&=&(-0.514, -0.238),~f_{20}(x_{20,2,New Q})=49.176,~Running~time=88.328,\\
 x_{20,2,Rand}&=&Error,\\
 x_{20,2,Iner}&=&\infty,\\
 x_{20,2,Back}&=&(-9.679,  0.936),~f_{20}(x_{20,2,Back})=0.003,~Running~time=101.469.
 \end{eqnarray*}  

We observe that this function is not even $C^1$, and hence does not satisfy the assumptions to apply New Q-Newton's method. 

{\bf Example 21:} L\'evi function $\#13$: $f_{21}(x,y)=\sin ^2(3\pi x)+(x-1)^2*(1+\sin ^2(3\pi y))+(y-1)^2(1+\sin ^2(2\pi y))$, see \cite{test}. The global minimum is at $(1,1)$. Depending on the randomly chosen initial point $x_0$, there are 2 typical behaviours.

\underline{Case 1:} The initial point is $(-3.52914182,  1.36683019)$ (randomly chosen). Points to be used in Table \ref{tab:Tasks}:
 \begin{eqnarray*}
 x_{21,1,ACR }&=&(3.306, 0.002),~f_{21}(x_{21,1,ACR})=6.380,~Running~time=0.028,\\
 x_{21,1,BFGS}&=&(-3.273,  0.334),~f_{21}(x_{21,1,BFGS})=19.322,~Running~time=0.077,\\
 x_{21,1,New Q}&=&(-3.273,   1.333),~f_{21}(x_{21,1,New Q})=18.742,~Running~time=0.128,\\
 x_{21,1,Rand}&=&(-3.570,  1.333),~f_{21}(x_{21,1,Rand})=21.703,~Running~time=0.284,\\
 x_{21,1,Iner}&=&\infty,\\
 x_{21,1,Back}&=&(1,1),~Running~time=7.554.
 \end{eqnarray*}

\underline{Case 2:} The initial point is $(0.95,1.15)$ (closer to the global minimum). We see that the convergence is better. Points to be used in Table \ref{tab:Tasks}:
 \begin{eqnarray*}
 x_{21,2,ACR }&=&(1,1),~Running~time=0.021,\\
 x_{21,2,BFGS}&=&(1,1),~Running~time=0.063,\\
 x_{21,2,New Q}&=&(1, 1),~Running~time=0.069,\\
x_{21,2,Rand}&=&(1, 1),~Running~time=0.295,\\
 x_{21,2,Iner}&=&\infty,\\
 x_{21,2,Back}&=&(1, 1),~Running~time=7.709.
 \end{eqnarray*}

{\bf Example 22:} Eggholder function $f_{22}(x,y)=-(y+47)*\sin \sqrt{|(x/2)+(y+47)|}-x*\sin \sqrt{|x-(y+47)|}$, see \cite{jamil-yang, test}. The global minimum is $(512,404.2319)$, with function value $-959.6407$. Depending on the randomly chosen initial point $x_0$, there are 2 typical behaviours. 

\underline{Case 1:} The initial point is $(224.63208339, -188.85104265)$ (randomly chosen). Points to be used in Table \ref{tab:Tasks}:
 \begin{eqnarray*}
 x_{22,1,ACR }&=&(263.344,-200.698),~f_{22,1,ACR}=-417.014,~Running~time=0.071,\\
 x_{22,1,BFGS}&=&(267.375, -202.898),~f_{22}(x_{22,1,BFGS})=-420.139,~Running~time=0.052,\\
 x_{22,1,New Q}&=&(399.558,-367.691),~f_{22}(x_{22,1,NewQ})=-716.671,~Running~time=0.086,\\
 x_{22,1,Rand}&=&(356.294, -247.954),~f_{22}(x_{22,1,Rand })=155.394,~Running~time=0.169,\\
 x_{22,1,Iner}&=&\infty,\\
 x_{22,1,Back}&=&(267.375, -202.898),~f_{22}(x_{22,1,Back})=-420.139,~Running~time=109.009.
 \end{eqnarray*}

\underline{Case 2:} The initial point is $(500, 450)$ (closer to the global minimum). Points to be used in Table \ref{tab:Tasks}:
 \begin{eqnarray*}
 x_{22,2,ACR }&=&(498.166,448.486),~f_{22}(x_{22,2,ACR})=-910.643,~Running~time=0.044,\\
 x_{22,2,BFGS}&=&(482.353, 432.878),~f_{22}(x_{22,2,BFGS })=-956.918,~Running~time=0.148,\\
 x_{22,2,New Q}&=&(482.353, 432.878),~f_{22}(x_{22,2,New Q })=-956.918,~Running~time=0.058,\\
 x_{22,2,Rand}&=&(482.353, 432.878),~f_{22}(x_{22,2,Rand })=-956.918,~Running~time=0.336,\\
 x_{22,2,Iner}&=&\infty,\\
 x_{22,2,Back}&=&(482.353, 432.879),~f_{22}(x_{22,2,Back})=-956.918,~Running~time=123.976.
 \end{eqnarray*}

This function is also not even $C^1$. 

{\bf Example 23:} McCormick function $f_{23}(x,y)=\sin (x+y)+(x-y)^2-1.5*x+2.5*y+1$, see \cite{cute, test}. The global minimum is $(-0.54719, -1.54719)$, with function value $-1.9133$. The initial point is $(-2.28637302,  1.52532269)$, which is randomly chosen. Points to be used in Table \ref{tab:Tasks}:
 \begin{eqnarray*}
 x_{23,ACR}&=&(-0.54719454, -1.54719754),~Running~time=0.036,\\
 x_{23,BFGS}&=&(-0.54719755, -1.54719755),~Running~time=0.036,\\
 x_{23,New Q}&=&(-0.54719755, -1.54719755),~Running~time=0.066,\\
 x_{23,Rand}&=&(-1.594, -2.594),~f_{23}(x_{23,Rand})=-1.228,~Running~time=2.047,\\
 x_{23,Iner}&=&\infty,\\
 x_{23,Back}&=&(-0.54719754, -1.54719754),~Running~time=105.411.
 \end{eqnarray*}

 {\bf Example 24:} Schaffer function $\#2$: $f_{24}(x,y)=0.5+(\sin ^2(x^2-y^2)-0.5)/(1+0.001(x^2+y^2))^2$, see \cite{jamil-yang, test}. The global minimum is $(0,0)$, with the function value $0$. Depending on the randomly chosen initial point $x_0$, there are 2 typical behaviours. 
 
 \underline{Case 1:} The initial point is $(-57.32135254, -17.85920667)$ (randomly chosen). Points to be used in Table \ref{tab:Tasks}:
 \begin{eqnarray*}
 x_{24,1,ACR }&=&(0.798,0.798),~f_{24}(x_{24,1,ACR})=0.0012,~Running~time=9.288,\\
 x_{24,1,BFGS}&=&(-56.237, -18.137),~f_{24}(x_{24,1,BFGS})=0.475,~Running~time=0.170,\\
 x_{24,1,New Q}&=&\infty,\\
 x_{24,1,Rand}&=&\infty,\\
 x_{24,1,Iner}&=&\infty,\\
 x_{24,1,Back}&=&(-57.296, -17.812),~f_{24}(x_{24,1,Back})=0.476,~Running~time=108.445.
 \end{eqnarray*}  
 
 \underline{Case 2:} The initial point is $(0.5,-0.7)$ (closer to the global minimum). Points to be used in Table \ref{tab:Tasks}:
 \begin{eqnarray*}
 x_{24,2,ACR}&=&(-7.073,7.074),~f_{24}(x_{24,1,ACR})=0.086,~Running~time=30.134,\\
 x_{24,2,BFGS}&=&(9e-9,3e-9),~Running~time=0.142,\\
 x_{24,2,New Q}&=&(-1e-12,-4e-12),~Running~time=0.099,\\
 x_{24,2,Rand}&=&(1e-8,2.6e-10),~Running~time=0.287,\\
 x_{24,2,Iner}&=&\infty,\\
 x_{24,2,Back}&=&(5e-8,-5e-8),~Running~time=106.102.
 \end{eqnarray*}

 {\bf Example 25:} Schaffer function $\#4$: $f_{25}(x,y)=0.5 + [\cos ^2(\sin (|x^2-y^2|))-0.5]/[1+0.001(x^2+y^2)]^2$, see \cite{jamil-yang, test}. The global minima are $(0,\pm 1.25313)$, with function value $0.292579$. Depending on the randomly chosen initial point $x_0$, there are 2 typical behaviours. 
 
 \underline{Case 1:} The initial point is $(86.64664502, 23.63197178)$ (randomly chosen). Points to be used in Table \ref{tab:Tasks}:
 \begin{eqnarray*}
 x_{25,1,ACR }&=&(83.014,1.860),~f_{25}(x_{25,1,ACR})=0.496,~Running~time=0.232,\\
 x_{25,1,BFGS}&=&(86.646, 23.631),~f_{25}(x_{25,1,BFGS})=0.506,~Running~time=0.080,\\
 x_{25,1,New Q}&=&\infty,\\
 x_{25,1,Rand}&=&\infty,\\
 x_{25,1,Iner}&=&\infty,\\
 x_{25,1,Back}&=&(86.710,23.634),~f_{25}(x_{25,1,Back})=0.497,~Running~time=117.947.
 \end{eqnarray*}  
 
 It is interesting to note that the function values for the methods New Q-Newton's method, Random Newton's method and Inertial Newton's method are about $0.5$, {\bf better} than that of BFGS, even though they diverse.

 \underline{Case 2:} The initial point is $(0.5,1.25313+0.8)$ (closer to a global minimum). Points to be used in Table \ref{tab:Tasks}:
 \begin{eqnarray*}
 x_{25,2,ACR }&=&(-0.005, -2.170),~f_{25}(x_{25,2,N})=0.293,~Running~time=26.289,\\
 x_{25,2,BFGS}&=&(-9e-12,  2.170),~f_{25}(x_{25,2,BFGS})=0.293,~Running~time=0.141,\\
 x_{25,2,New Q}&=&(-6e-14,  2.170),~f_{25}(x_{25,2,NewQ})=0.293,~Running~time=0.166,\\
 x_{25,2,Rand}&=&(9e-12, 2.802),~f_{25}(x_{25,2,Rand})=0.295,~Running~time=0.417,\\
 x_{25,2,Iner}&=&\infty,\\
 x_{25,2,Back}&=&(3.8e-5, 2.170),~f_{25}(x_{25,2,Back})=0.293,~Running~time=111.854.
 \end{eqnarray*}

 {\bf Example 26:} Styblinski-Tang function $f_{26}(x_1,\ldots ,x_D)=$ $\sum _{i=1}^D(x_i^4-16x_i^2+5x_i)/2$, see \cite{jamil-yang}. The global minimum is at $(x_1,\ldots ,x_D)$ $=$ $(-2.903534$, $\ldots $, $-2.903534)$. The function value is in the interval $(-39.16617D,-39.16616D)$. We choose $D=2$. The minimum value of the function is then about $-78.33233140754284$. Depending on the randomly chosen initial point $x_0$, there are 2 typical behaviours. 
 
 \underline{Case 1:} The initial point is $(1.02183524, 0.13979978)$ (randomly chosen).  Points to be used in Table \ref{tab:Tasks}:
 \begin{eqnarray*}
 x_{26,1,ACR }&=&(2.746,-2.903),~f_{26}(x_{26,1,ACR})=-64.195,~Running~time=0.024,\\
 x_{26,1,BFGS}&=&(2.7466,-2.903),~f_{26}(x_{26,1,BFGS})=-64.195,~Running~time=0.092,\\
 x_{26,1,New Q}&=&(2.746,-2.903),~f_{26}(x_{26,1,NewQ})=-64.195,~Running~time=0.144,\\
 x_{26,1,Rand}&=&(0.156,0.156),~f_{26}(x_{26,1,Rand})=0.391,~Running~time=0.298,\\
 x_{26,1,Iner}&=&\infty,\\
 x_{26,1,Back}&=&(2.746,-2.903),~f_{26}(x_{26,1,Back})=-64.195, ~Running~time=133.348.
 \end{eqnarray*}

\underline{Case 2:} The initial point is $(-2.903534+0.3, -2.903534-0.8)$ (closer to the global minimum). Points to be used in Table \ref{tab:Tasks}:
 \begin{eqnarray*}
 x_{26,2,ACR}&=&(-2.90353478, -2.90353428),~Running~time=0.029,\\
 x_{26,2,BFGS}&=&(-2.90353403, -2.90353403),~Running~time=0.053,\\
 x_{26,2,New Q}&=&(-2.90353403, -2.90353403),~Running~time=0.085,\\
 x_{26,2,Rand}&=&(-2.90353403, -2.90353403),~Running~time=0.443,\\
 x_{26,1,Iner}&=&\infty,\\
 x_{26,1,Back}&=&(-2.903534, -2.90353403),~Running~time=134.293.
 \end{eqnarray*}

\end{document}